\newcommand*{\mailto}[1]{\href{mailto:#1}{\nolinkurl{#1}}}
\def\theequation{\@arabic\c@equation}
\newcommand{\diag}{\operatorname{diag}}
\newcommand{\bbN}{{\mathbb{N}}}
\newcommand{\bbR}{{\mathbb{R}}}
\newcommand{\bbC}{{\mathbb{C}}}
\newcommand{\cA}{{\mathcal A}}
\newcommand{\cB}{{\mathcal B}}
\newcommand{\cF}{{\mathcal F}}
\newcommand{\cH}{{\mathcal H}}
\newcommand{\cI}{{\mathcal I}}
\newcommand{\cK}{{\mathcal K}}
\newcommand{\cL}{{\mathcal L}}
\newcommand{\dott}{\,\cdot\,}
\newcommand{\no}{\nonumber}
\newcommand{\lb}{\label}
\newcommand{\f}{\frac}
\newcommand{\ol}{\overline}
\newcommand{\Oh}{O}
\newcommand{\la}{\lambda}
\newcommand{\Om}{\Omega}
\newcommand{\supp}{\text{\rm{supp}}}
\newcommand{\bi}{\bibitem}
\renewcommand{\Re}{\text{\rm Re}}
\renewcommand{\Im}{\text{\rm Im}}
\renewcommand{\diag}{\text{\rm diag}}
\renewcommand{\max}{\text{\rm max}}
\renewcommand{\min}{\text{\rm min}}
\newcommand{\eT}{T}
\def\nn{\nonumber}
\def\a{\alpha}
\def\b{\beta}
\def\g{\gamma}
\def\G{\Gamma}
\def\Lam{\Lambda}
\def\la{\lambda}
\def\om{\omega}
\def\Om{\Omega}
\def\Up{\Upsilon}
\def\vp{\varphi}
\def\ve{\varepsilon}
\def\wh{\widehat}
\def\wt{\widetilde}
\def\ov{\overline}
\def\p{\partial}
\def\BC{{\mathbb C}}
\def\BR{{\mathbb R}}
\def\cla{{\mathcal A}}
\def\clf{{\mathcal F}}
\def\diag{\mathrm{diag}}
\numberwithin{equation}{section}
\newtheorem{theorem}{Theorem}[section]
\newtheorem{lemma}[theorem]{Lemma}
\newtheorem{corollary}[theorem]{Corollary}
\newtheorem{proposition}[theorem]{Proposition}
\newtheorem{definition}[theorem]{Definition}
\theoremstyle{remark}
\newtheorem{remark}[theorem]{Remark}
\newtheorem{notation}[theorem]{Notation}
\begin{document}

\title[The Inverse Approach to Dirac-Type Systems]{The Inverse Approach to Dirac-Type Systems \\
Based on the $A$-Function Concept}

\author[F.\ Gesztesy]{Fritz Gesztesy}
\address{Department of Mathematics,
Baylor University, One Bear Place \#97328,
Waco, TX 76798-7328, USA}
\email{\mailto{Fritz\_Gesztesy@baylor.edu}}
\urladdr{\url{http://www.baylor.edu/math/index.php?id=935340}}

\author[A.\ L.\ Sakhnovich]{Alexander Sakhnovich}
\address{Faculty of Mathematics\\ University of Vienna\\
Oskar-Morgenstern-Platz 1\\ 1090 Wien\\ Austria}
\email{\mailto{oleksandr.sakhnovych@univie.ac.at}}
\urladdr{\url{http://www.mat.univie.ac.at/~sakhnov/}} 

\date{\today}
\thanks{The research of A.\ L.\ Sakhnovich was supported by the
Austrian Science Fund (FWF) under Grant No.~P29177.} 
\subjclass[2010]{Primary 34A55, 34B20, 34L40; Secondary 34B24.} 
\keywords{Inverse problems, Dirac-type systems, matrix-valued potentials.}

\begin{abstract}
The principal objective in this paper is a new inverse approach to general Dirac-type systems of the form 
\begin{align*} &      
y^{\prime}(x, z )=i (z J + J V(x))y(x,
z ) \quad (x \geq 0),
\end{align*} 
where $y = (y_1,\dots,y_m)^{\top}$ and (for $m_1, m_2 \in \bbN$) 
\begin{align*} &    
J = \begin{bmatrix}
I_{m_1} & 0_{m_1 \times m_2} \\ 0_{m_2 \times m_1} & -I_{m_2}
\end{bmatrix}, \quad  V= \begin{bmatrix}
0_{m_1} & v \\ v^{*} & 0_{m_2} \end{bmatrix},  \quad m_1+m_2=:m,  
 \end{align*}  
for $v \in \big[C^1([0, \infty))\big]^{m_1 \times m_2}$, 
modeled after B.~Simon's 1999 inverse approach to half-line Schr\"odinger operators. 
In particular, we derive the $\cla$-equation associated to this Dirac-type system in the 
($z$-independent) form
\[
\frac{\p}{\p \ell}\cla(x, \ell) = \frac{\p}{\p x}\cla(x,\ell)
+ \int_0^x \cla(x-t,\ell)\cla(0,\ell)^*\cla(t,\ell) \, dt \quad (x \geq 0, \ell \geq 0).
\]
Given the fundamental positivity condition $S_T > 0$ in \eqref{053S} (cf.\ \eqref{053} for 
details), we prove that this integro-differential equation for 
$\cla(\dott,\dott)$ is uniquely solvable for initial conditions   
\[ 
\cla(\dott,0)=\cla(\dott) \in \big[C^1([0,\infty))\big]^{m_2 \times m_1}, 
\]
and the corresponding potential coefficient $v \in \big[C^1([0,\infty))\big]^{m_1 \times m_2}$ can be 
recovered from $\cla(\dott,\dott)$ via
\[
v(\ell) = - i \cla(0,\ell)^* \quad (\ell \geq 0).
\] 
\end{abstract}

\maketitle


{\scriptsize{\tableofcontents}}

\section{Introduction} \lb{s1}

Barry Simon's seminal paper \cite{Si99} on a new approach to inverse spectral problems for scalar Schr\"odinger operators $(-d^2/dx^2) + V(x)$ on the half-line $(0,\infty)$ with a Dirichlet boundary condition at $x=0$ (for simplicity), is based on the so-called $A$-equation
\begin{align}& \lb{A1}
\frac{\p A}{\p x}(\alpha,x)= \frac{\p A}{\p \a}(\alpha,x) + \int_0^{\a}A(\b,x)A(\a - \b, x) \, d \b \quad 
(x \geq 0, \alpha \geq 0). 
\end{align}
The $A$-function $A(\alpha,x)$ on the other hand is connected with the Weyl--Titchmarsh $m$-function $m(z,x)$ of the Schr\"odinger equation on the half-line $[x, \infty)$ ($x > 0$) via the Laplace (or Fourier) transformation in the complex domain. More precisely, assuming $V\in L^1((0,\eT))$ for all $\eT \in (0,\infty)$ to be 
real-valued, one has 
\begin{align} 
\begin{split} 
m(z,x)= i z^{1/2}- \int_0^{\eT}A(\a,x)e^{2i \a z^{1/2}} \, d\a + \Oh\Big(e^{i(2 \eT-\ve) z^{1/2}}\Big),& \lb{A1+} \\ 
z \in \bbC\backslash [0,\infty), \; \pi - \varepsilon < \arg\big(i z^{1/2}\big) < (\pi/2) + \varepsilon,&
\end{split}
\end{align}
for any $\eT > \ve >0$. In particular, one notes that \eqref{A1} is independent of the spectral parameter $z$. 

Introducing the set 
\begin{equation}
\boldsymbol{A}_{\eT}=\{A\in L^1([0,\eT])\,|\, \text{$A$ real-valued}, \, I+\cK_A >0 \},   \lb{0.23}
\end{equation}
where
\begin{align}
& (\cK_A f)(\alpha)=\int_0^{\eT} K(\alpha,\beta) f(\beta) \, d\beta, \quad 
\alpha \in [0,\eT], \; f\in L^2((0,\eT)), \\
& K(\alpha, \beta)=[\phi(\alpha-\beta)-\phi(\alpha+\beta)]/2, \quad 
\phi(\alpha)=\int_0^{|\alpha|/2} A(\gamma) \, d\gamma, \quad (\alpha, \beta \in [0,\eT]),   \no 
\end{align}
Remling \cite{Re03} completed the inverse approach based on the $A$-function by proving that 
$\boldsymbol{A}_{\eT}$ is precisely the set of $A$-functions in 
\begin{align} 
\begin{split} 
m(z)= i z^{1/2}- \int_0^{\eT}A(\a)e^{2i \a z^{1/2}} \, d\a+ \Oh\Big(e^{i(2 \eT-\ve) z^{1/2}}\Big),& \lb{A1+a} \\ 
z \in \bbC\backslash [0,\infty), \; \pi - \varepsilon < \arg\big(i z^{1/2}\big) < (\pi/2) + \varepsilon,&
\end{split}
\end{align} 
for all $\eT > 0$. Equivalently, given $A_0\in L^1((0,\eT))$, there exists a potential 
$V\in L^1((0,\eT))$ such that $A_0$ is the $A$-function of $V$ if and only if 
$A_0\in \boldsymbol{A}_{\eT}$. In particular, we emphasize that the $A$-equation \eqref{A1} plays a crucial role in Simon's new inverse approach to half-line Schr\"odinger operators (in contrast to the traditional approach in \cite{GL55}, \cite{Le87}--\cite{LS75}, \cite{Ma11, Re02}), since 
\[
V(x) = A(0,x) \, \text{ for a.e.~$x \in [0,\eT]$.} 
\]

Additional work on the $A$-function ensued in \cite{GS00}, and other interesting results by different authors followed (see, e.g., 
\cite{Hi01, Re03, Zh03, Zh06}, and the survey article \cite{Ge07}). 

Although Dirac-type systems present a natural generalization of the Schr\"odinger equation and there are
many analogies between the corresponding spectral and inverse spectral theories, the analog of the $A$-equation  for Dirac-type system 
(which will be called
the $\cla$-equation to distinguish it from the Schr\"odinger equation case)
was not formulated until now. In the present paper we fill this gap and formulate and also prove {\it solvability} of the $\cla$-equation for the general Dirac-type system (in the general matrix-valued case)
\begin{align} &       \lb{A2}
y^{\prime}(x, z )=i (z J + J V(x))y(x,
z ), \quad
x \geq 0 \quad \left(y^{\prime}(x,z):=\frac{dy}{d x}(x,z)\right),
\end{align} 
where $y = (y_1,\dots,y_m)^{\top}$ and (for $m_1, m_2 \in \bbN$) 
\begin{align} &   \lb{A3}
J = \begin{bmatrix}
I_{m_1} & 0_{m_1 \times m_2} \\ 0_{m_2 \times m_1} & -I_{m_2}
\end{bmatrix}, \quad  V= \begin{bmatrix}
0_{m_1} & v \\ v^{*} & 0_{m_2} \end{bmatrix},  \quad m_1+m_2=:m. 
 \end{align} 
Here $\bbN$ stands for the set of natural numbers and $I_k$ is the $k \times k$ identity matrix. The $m \times m$ matrix-valued function $V$ (and, sometimes, the $m_1 \times m_2$ matrix-valued function $v$) is called the potential of the Dirac-type system \eqref{A2}.  

Before being able to formulate some of our principal results, we need a few preparations and some standard definitions. 

\begin{notation} \lb{NnFS} The $m \times m$ fundamental solution of system \eqref{A2}, normalized by $I_m$ at $x=\ell$, is denoted by $u_{\ell}(x,z)$, that is, one has, 
\begin{equation}\lb{e1} 
u_{\ell}(\ell,z)=I_m.
\end{equation}
\end{notation} 

\begin{definition} \lb{DnWF} 
Suppose that $v \in \big[L^1((0,R))\big]^{m_1 \times m_2}$ for all $R > 0$. Then the 
 $m_2\times m_1$ matrix function $\vp_{\ell}(z)$ is called a $($matrix-valued\,$)$ Weyl--Titchmarsh function associated with the system \eqref{A2} on $[\ell, \infty)$ $(\ell \geq 0)$
if it is holomorphic in $\BC_+$ and the entries of $u_{\ell}(\dott ,z)\begin{bmatrix}I_{m_1} \\ \vp_{\ell}(z)\end{bmatrix}$ belong to $L^2((\ell, \infty))$,
that is, 
\begin{align}\lb{e2}&
u_{\ell}(\dott ,z)\begin{bmatrix}I_{m_1} \\ \vp_{\ell}(z)\end{bmatrix} \in \big[L^2((\ell, \infty))\big]^{m \times m_1}.
\end{align}
\end{definition}

\begin{remark}\lb{RkWF}
The Weyl--Titchmarsh function $\vp_{\ell}(\dott, z)$ always exists, is unique, and contractive for $z \in \BC_+$ (see \cite{FKRS12} and \cite[Prop.~2.17 and Cor.~2.21]{SSR13}). 
\hfill  $\diamond$
\end{remark}

Propositions \ref{cyHea} and \ref{PnDif}, and Remark \ref{RkSdvig} in the present paper then yield the next statement:

\begin{proposition}\lb{PnH} Assume that  $v \in \big[C^1([0, \infty))\big]^{m_1 \times m_2}$ in the Dirac-type  system \eqref{A2}. Then, the associated Weyl--Titchmarsh functions $\vp_{\ell}(z)$ admit a representation of the type 
\begin{align} \lb{Repr0}&
\vp_{\ell}(z)=2i z\int_0^{\infty}e^{2i xz}\Phi(x, \ell) \, dx \quad (\Im (z) >0, \; \ell \geq 0),
\end{align} 
where
\begin{equation} 
\Phi(\dott, \ell) \in \big[C^2([0, \infty))\big]^{m_2 \times m_1} \quad (\ell \geq 0).
\end{equation} 
\end{proposition}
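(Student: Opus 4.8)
The plan is to derive a Riccati differential equation for $\vp_\ell(z)$ in the base point $\ell$, to recast it as a Volterra-type fixed-point equation over $[\ell,\infty)$ whose unique bounded solution is, by Remark~\ref{RkWF}, the Weyl--Titchmarsh function, and then to read off the asserted Laplace representation by expanding that fixed point in a series whose terms are manifestly Laplace transforms of functions on $[0,\infty)$.

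First I would set up the Riccati equation. Writing the $L^2$ Weyl solution from \eqref{e2} as $u_\ell(\dott,z)\begin{bmatrix} I_{m_1}\\ \vp_\ell(z)\end{bmatrix}=\begin{bmatrix}\psi_1\\\psi_2\end{bmatrix}$ and forming $\Psi(x,z):=\psi_2(x,z)\psi_1(x,z)^{-1}$ (which is independent of the base point, since changing it multiplies the solution on the right by a constant invertible factor that cancels, and $\psi_1(\dott,z)$ is invertible for $\Im(z)>0$ in the standard Weyl framework), one has $\Psi(\ell,z)=\vp_\ell(z)$. Differentiating and using the block form of \eqref{A2}--\eqref{A3}, namely $\psi_1'=i(z\psi_1+v\psi_2)$ and $\psi_2'=-i(v^*\psi_1+z\psi_2)$, gives
\[
\f{\p}{\p\ell}\vp_\ell(z)=-2iz\,\vp_\ell(z)-iv(\ell)^*-i\,\vp_\ell(z)v(\ell)\vp_\ell(z).
\]

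Next I would solve this by variation of constants, selecting the solution bounded as $\ell\to\infty$ for $\Im(z)>0$ (the contractive one singled out in Remark~\ref{RkWF}), which yields the fixed-point equation
\[
\vp_\ell(z)=\int_0^\infty e^{2izt}\big[\,iv(t+\ell)^*+i\,\vp_{t+\ell}(z)v(t+\ell)\vp_{t+\ell}(z)\,\big]\,dt .
\]
Iterating from the seed $\vp^{(0)}_\ell(z)=i\int_0^\infty e^{2izt}v(t+\ell)^*\,dt$ and using the bound $\norm{\vp_\ell(z)}\le 1$ to control the quadratic term, the successive approximations converge for $\Im(z)$ large. The crucial structural point is that a product of two factors $\int_0^\infty e^{2iza}(\dott)\,da$ and $\int_0^\infty e^{2izb}(\dott)\,db$ equals, after the substitution $s=a+b$, the Laplace transform of a convolution; hence every iterate, and therefore the limit, has the form $\vp_\ell(z)=\int_0^\infty e^{2izx}\rho(x,\ell)\,dx$ with a kernel $\rho(\dott,\ell)$ built from $v$ by convolutions and one outer $t$-integration, the identity then extending to all of $\BC_+$ by holomorphy. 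Because $v\in\big[C^1([0,\infty))\big]^{m_1\times m_2}$, the leading term $iv(\dott+\ell)^*$ is $C^1$ in $x$, while each correction carries an extra integration and is at least as regular, so $\rho(\dott,\ell)\in\big[C^1([0,\infty))\big]^{m_2\times m_1}$ (and, incidentally, $\rho(0,\ell)=iv(\ell)^*$, since the corrections vanish at $x=0$). Finally I set $\Phi(x,\ell):=-\int_0^x\rho(s,\ell)\,ds$; then $\Phi(0,\ell)=0$, $\Phi(\dott,\ell)\in\big[C^2([0,\infty))\big]^{m_2\times m_1}$, and an integration by parts (the boundary term at $x=\infty$ vanishing because $|e^{2izx}|=e^{-2\Im(z)x}\to0$) converts the plain transform into the asserted form $\vp_\ell(z)=2iz\int_0^\infty e^{2izx}\Phi(x,\ell)\,dx$.

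The main obstacle is this convergence-and-regularity step: one must show the series converges in a norm strong enough to guarantee $\rho(\dott,\ell)\in C^1$ (equivalently $\Phi(\dott,\ell)\in C^2$), justify interchanging summation with the Laplace integration and the convolution rearrangements, and verify that the function so constructed really coincides with the \emph{unique} contractive Weyl solution of Remark~\ref{RkWF} rather than with some other solution of the Riccati equation. The contractivity bound $\norm{\vp_\ell}\le1$ and the exponential weight $e^{2izt}$, together with the smoothing supplied by the outer integration, are exactly what make these estimates and the propagation of the $C^1$ regularity of $v$ into $C^1$ regularity of $\rho$ go through.
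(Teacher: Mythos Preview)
Your route is genuinely different from the paper's. The paper does not build $\Phi$ from the Riccati equation or from any Laplace-kernel iteration; instead it \emph{defines} $\Phi=E^{-1}\gamma_1$ via a similarity transformation $K=E\cI E^{-1}$ of the Volterra operator $K$ in \eqref{e15} (Proposition~\ref{PnSimN}), obtains the Laplace representation \eqref{Repr} from \cite{Sa15} (Proposition~\ref{cyHea}), and then proves $\Phi\in C^2$ by tracking how $\wt E^{-1}$ and $E_0^{-1}$ act on $C^2$-functions (Proposition~\ref{PnDif}). Your Riccati--fixed-point scheme is closer in spirit to the Neumann expansion the paper develops in Appendix~\ref{sA} (Lemmas~\ref{lA.3}, \ref{lA.4}), but there that machinery is used for a different purpose (to control $\partial_\ell\cA$ in the proof of Theorem~\ref{AeqfD}), not to establish $\Phi\in C^2$.

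Your plan is sound in outline, but the step you flag as ``the main obstacle'' really is the proof. The contractivity bound $\|\vp_\ell(z)\|\le 1$ together with the exponential weight gives convergence of the successive approximations \emph{in $z$-space}, not convergence of the kernels $\rho_k(\cdot,\ell)$ in any $C^1$ (or even $C^0$) norm; those are genuinely different statements. To sum the kernels one needs pointwise bounds on the iterated integrals, and in the paper this is done by a comparison with the explicit model potential $\wh v$ of \eqref{w5} (Lemma~\ref{lA.4} and Remark~\ref{RkL}), under a temporary compact-support assumption on $v$ which is removed only by a locality argument at the end. A second technical wrinkle you pass over is the validity of the representation for \emph{all} $\Im(z)>0$: your argument produces $\rho$ with a priori growth in $x$ (the double-integral correction scales like $x^2$ at leading order), so the integral and the integration-by-parts boundary term are only controlled for $\Im(z)$ large; extending the identity to $\bbC_+$ by holomorphy presupposes that the right-hand side is already defined there, which is exactly what remains to be shown. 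Both issues can be handled (the paper contains essentially all the ingredients), but they require substantially more than ``the smoothing supplied by the outer integration''; the paper's operator-$E$ approach sidesteps them by working on finite intervals $[0,T]$ where the regularity of $E^{-1}$ can be checked directly.
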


The analog of the $A$-function  in \eqref{A1+} for the Dirac-type system \eqref{A2} (which will be called the $\cla$-function) can now be introduced via  
 \begin{align} &       \lb{042}
\cla(x, \ell):=\frac{\p}{\p x}\Phi(x,\ell) \equiv \Phi^{\prime}(x,\ell); \quad \cla(x):=\cla(x,0) \quad (x \geq 0, \; \ell \geq 0).
\end{align} 
According to Proposition \ref{PnH} and Corollary \ref{CyNesCond} the following result holds.

\begin{proposition} \lb{PnH2} Assume that  $v \in \big[C^1([0, \infty))\big]^{m_1 \times m_2}$ in the Dirac-type  system \eqref{A2}. Then, with $\cla(\dott)$ given by \eqref{042},
the operator $S := S_{\eT}$ in $\big[L^2((0,\eT))\big]^{m_2}$ $($$\eT \in (0,\infty)$$)$ introduced as  
\begin{align}  \lb{053}
\begin{split} 
& (S f)(x) = (S_{\eT} f)(x) = f(x) - \int_0^{\eT}s(x,t) f(t) \, dt, \quad f \in \big[L^2((0,\eT))\big]^{m_2},   \\ 
& s(x,t):=\int_0^{\min(x,t)}\cla(x-\xi)\cla(t-\xi)^* \, d\xi \quad ((x,t) \in (0,\eT) \times (0,\eT)), 
\end{split} 
\end{align}
satisfies the positive definiteness property  
\begin{equation}
S = S_{\eT} > 0     \lb{053S}
\end{equation}
for all $\eT \in (0,\infty)$.
\end{proposition}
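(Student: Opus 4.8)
The plan is to recognize the integral operator in \eqref{053} as a perfect square and thereby reduce the positivity to an operator–norm bound. Introducing the Volterra convolution operator $\cV_{\eT}$ on $\big[L^2((0,\eT))\big]^{m_1}$ by
\[
(\cV_{\eT} g)(x) = \int_0^x \cla(x-\xi)\,g(\xi)\,d\xi ,
\]
a direct application of Fubini's theorem to the kernel $s(\dott,\dott)$ shows that the integral operator in \eqref{053} is precisely $\cV_{\eT}\cV_{\eT}^*$, so that $S_{\eT} = I - \cV_{\eT}\cV_{\eT}^*$ on $\big[L^2((0,\eT))\big]^{m_2}$. Consequently $\langle S_{\eT}f,f\rangle = \|f\|^2 - \|\cV_{\eT}^* f\|^2$, and since $\cV_{\eT}$ is compact (continuous kernel on a finite interval), the assertion $S_{\eT}>0$ in \eqref{053S} is equivalent to the strict bound $\|\cV_{\eT}\| < 1$, while $S_{\eT}\ge 0$ is equivalent to $\|\cV_{\eT}\|\le 1$.

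Next I would identify the symbol of $\cV_{\eT}$ via Proposition \ref{PnH}. Integrating the representation \eqref{Repr0} by parts in $x$, using $\cla(x)=\Phi^{\prime}(x,0)$ together with the standard high-energy normalization $\vp_0(z)\to 0$ as $z\to i\infty$ (equivalently $\Phi(0,0)=0$), one obtains
\[
\vp_0(z) = -\int_0^{\infty} e^{2ixz}\cla(x)\,dx, \qquad \Im(z)>0 .
\]
Passing to the Fourier image $\cF$ of causal $L^2$-functions — so that $\cF$ maps onto $H^2(\BC_+)$ and turns the half-line convolution $\cV$ with $\cla$ into multiplication by the analytic symbol $\widehat{\cla}(w):=\int_0^\infty e^{iwx}\cla(x)\,dx = -\vp_0(w/2)$ — the contractivity of $\vp_0$ on $\BC_+$ recorded in Remark \ref{RkWF} yields $\|\widehat{\cla}\|_{H^\infty(\BC_+)}\le 1$, hence $\|\cV\|\le 1$ on the half-line by Plancherel's theorem. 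Since $\cV_{\eT}$ is the compression of $\cV$ to $\big[L^2((0,\eT))\big]^{m_1}$, we get $\|\cV_{\eT}\|\le\|\cV\|\le 1$, i.e.\ $S_{\eT}\ge 0$.

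The hard part will be upgrading $S_{\eT}\ge 0$ to the strict inequality $S_{\eT}>0$. As $\cV_{\eT}\cV_{\eT}^*$ is compact, $\|\cV_{\eT}\|=1$ would force $1$ to be an eigenvalue, i.e.\ there would exist $h\neq 0$ in $\big[L^2((0,\eT))\big]^{m_1}$ with $\|\cV_{\eT}h\|=\|h\|$. Tracing the equalities through $\|\cV_{\eT}h\|\le\|\cV h\|\le\|\cV\|\,\|h\|\le\|h\|$ shows that $h$ must be a norm-attaining vector for the half-line contraction $\cV$; in Fourier variables this reads $\|\vp_0(t/2)\,\widehat{h}(t)\| = \|\widehat{h}(t)\|$ for a.e.\ $t\in\BR$, so $\widehat{h}(t)\in\ker\big(I_{m_1}-\vp_0(t/2)^*\vp_0(t/2)\big)$ a.e. Because $h$ is supported in the finite interval $[0,\eT]$, its transform $\widehat{h}$ is entire of exponential type and hence nonzero a.e.\ on $\BR$ (Paley--Wiener), whence $\det\big(I_{m_1}-\vp_0(t/2)^*\vp_0(t/2)\big)=0$ for a.e.\ $t\in\BR$. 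This should be incompatible with the structure of $\vp_0$: as a contractive analytic function with $\vp_0(z)\to 0$ $(z\to i\infty)$, $\vp_0$ is strictly contractive throughout $\BC_+$ by the maximum principle and is not inner, so its boundary defect $I_{m_1}-\vp_0^*\vp_0$ cannot be singular almost everywhere. I expect the conversion of this boundary extremality into an actual contradiction — carried out through the inner–outer (Schur-function) theory of contractive analytic matrix functions — to be the principal technical obstacle, and the compact support of $h$ (equivalently, the finiteness of $\eT$) to be exactly what makes the strict inequality available.

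As an independent check on the borderline behaviour one may also deform in $\eT$: the factorization gives $S_{\eT}\ge 0$ for every $\eT$, $S_{\eT}\to I$ as $\eT\downarrow 0$, and $\eT\mapsto\|\cV_{\eT}\|$ is continuous; excluding that the top eigenvalue of $\cV_{\eT}\cV_{\eT}^*$ ever returns to $1$ again reduces to the same extremal analysis, confirming that the analytic input of Remark \ref{RkWF} is precisely what powers the strict positivity in \eqref{053S}.
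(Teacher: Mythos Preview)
Your factorization $S_\eT = I - \cV_\eT \cV_\eT^*$ is correct, and reducing \eqref{053S} to the bound $\|\cV_\eT\|<1$ via the contractivity of $\vp_0$ is a genuinely different route from the paper's. The gap is in the upgrade from $\|\cV_\eT\|\le 1$ to strict inequality. You argue that since $\vp_0(z)\to 0$ as $z\to i\infty$, the function $\vp_0$ ``is strictly contractive throughout $\BC_+$ by the maximum principle and is not inner''; but strict contractivity in the \emph{open} half-plane does not exclude inner boundary behaviour. Already the scalar function $\vp(z)=e^{2iaz}$ satisfies $|\vp(z)|<1$ for all $\Im z>0$ and $\vp(i\infty)=0$, yet $|\vp(t)|=1$ for every real $t$. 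Thus the hypotheses you invoke do not rule out $\det\big(I_{m_1}-\vp_0(t/2)^*\vp_0(t/2)\big)=0$ a.e., and the step you yourself flag as ``the principal technical obstacle'' is a genuine missing idea, not a formality; the continuity-in-$\eT$ remark at the end does not close it either, since nothing you have established prevents the top singular value of $\cV_\eT$ from reaching~$1$. (A secondary issue: identifying the half-line symbol with $-\vp_0(\cdot/2)$ on $\BR$ presumes global integrability of $\cla$ that $v\in C^1$ alone does not give; one must first localize, e.g.\ truncate $v$ beyond $\eT$, which leaves $\cla|_{[0,\eT]}$ and hence $\cV_\eT$ unchanged.)

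The paper avoids all boundary analysis by producing an explicit two-sided factorization. From the similarity operator $E$ of Proposition~\ref{PnSimN} one checks that $S:=E^{-1}(E^{-1})^*$ satisfies the operator identity $\cI S-S\cI^*=i\Pi J\Pi^*$ of \eqref{e51}--\eqref{e52}; uniqueness of the bounded solution of this identity (\cite[Prop.~2.41]{SSR13}) forces $S$ to coincide with the operator with kernel $s(\dott,\dott)$ in \eqref{053}. Since $E$ is boundedly invertible, $S_\eT=E^{-1}(E^{-1})^*\ge \|E\|^{-2}I>0$ at once. In other words, strict positivity comes for free from the factorization $S=E^{-1}(E^{-1})^*$, not from Schur-function properties of $\vp_0$.
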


Now, we are in a position to formulate our principal theorem, an immediate consequence of Theorems \ref{AeqfD} and \ref{Tm6.3},
and formula \eqref{e43}:

\begin{theorem}\lb{MT0} 
Suppose that \, $v \in \big[C^1([0, \infty))\big]^{m_1 \times m_2}$ in the Dirac-type  system \eqref{A2}. Then, the 
$\cla(\dott,\dott)$-function given by
\eqref{042} is continuously differentiable, $\cla(\dott,\dott)\in \big[C^1(\bbR^2_{+,+})\big]^{m_2 \times m_1}$, 
with $\bbR^2_{+,+} = \big\{(x,\ell) \in \bbR^2 \, \big| \, x\geq 0, \, \ell \geq 0\big\}$, and satisfies the integro-differential 
equation $($for $(x,\ell) \in \bbR^2_{+,+}$$)$
\begin{equation}      \lb{044}
\frac{\p}{\p \ell}\cla(x, \ell) = \frac{\p}{\p x}\cla(x,\ell)
+ \int_0^x \cla(x-t,\ell)\cla(0,\ell)^*\cla(t,\ell) \, dt.
\end{equation} 
Conversely, the system \eqref{044} with initial condition
\begin{equation}    \lb{062}
\cla(\dott,0)=\cla(\dott)\in \big[C^1([0,\infty))\big]^{m_2 \times m_1},
\end{equation}
such that the positive definiteness property \eqref{053S} holds for all $\eT \in (0,\infty)$, has a unique solution 
\begin{equation} 
\cla(\dott,\dott) \in \big[C^1(\bbR^2_{+,+})\big]^{m_2 \times m_1}, 
\end{equation} 
and an associated Dirac-type system of the form \eqref{A2} exists. In particular, the potential coefficient 
$v \in \big[C^1([0,\infty))\big]^{m_1 \times m_2}$ in this Dirac-type system is recovered 
from $\cla(0,\dott)$ via 
\begin{equation}
v(\ell) = - i \cla(0,\ell)^* \quad (\ell \geq 0).    \lb{v}
\end{equation}
\end{theorem}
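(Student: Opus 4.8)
The plan is to treat the two assertions separately: the \emph{direct} statement (that the $\cla$-function of a given Dirac system obeys \eqref{044}, with $v$ recovered by \eqref{v}) and the \emph{inverse} statement (unique solvability of \eqref{044} under the positivity hypothesis \eqref{053S}, together with its realizability by a Dirac system). The engine for the direct part is the Riccati equation satisfied by the Weyl--Titchmarsh function $\vp_\ell(z)$ as the base point $\ell$ varies. Writing the $L^2$-solution of \eqref{A2} as $u_\ell(\dott,z)\begin{bmatrix} I_{m_1}\\ \vp_\ell(z)\end{bmatrix}$ and representing $\vp_\ell(z)=y_2(\ell)y_1(\ell)^{-1}$ through a fixed solution $y=(y_1,y_2)^\top$ that is square-integrable near $+\infty$, differentiation in $\ell$ together with \eqref{A2}--\eqref{A3} yields
\begin{equation*}
\frac{\p}{\p\ell}\vp_\ell(z) = -i\,v(\ell)^* - 2iz\,\vp_\ell(z) - i\,\vp_\ell(z)\,v(\ell)\,\vp_\ell(z).
\end{equation*}
This is the Dirac-system analogue of the Riccati equation for the Weyl $m$-function underlying Simon's $A$-equation.

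First I would feed the representation \eqref{Repr0} into this Riccati equation. An integration by parts turns \eqref{Repr0} into $\vp_\ell(z)=-\Phi(0,\ell)-\int_0^\infty e^{2ixz}\cla(x,\ell)\,dx$, with $\cla=\p_x\Phi$ as in \eqref{042}. Letting $z\to i\infty$ along the imaginary axis and observing that every summand of the Riccati equation except $-2iz\,\vp_\ell(z)$ remains bounded forces $\Phi(0,\ell)\equiv 0$; the next order in the high-energy expansion, $\vp_\ell(z)=\cla(0,\ell)/(2iz)+O(z^{-2})$, matches the constant term $-i\,v(\ell)^*$ and produces exactly the recovery formula \eqref{v}. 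With these normalizations in hand, I would rewrite each summand of the Riccati equation as a Laplace transform in $x$: after one more integration by parts, $2iz\,\vp_\ell$ becomes the transform of $\p_x\cla$; the quadratic term $\vp_\ell\,v\,\vp_\ell$, once the scalar and sign factors are reorganized via \eqref{v}, produces the transform of the convolution $\int_0^x\cla(x-t,\ell)\cla(0,\ell)^*\cla(t,\ell)\,dt$; and $\p_\ell\vp_\ell$ becomes the transform of $\p_\ell\cla$. Injectivity of the Laplace transform on continuous matrix-valued functions then yields \eqref{044} pointwise, while the regularity $\cla\in[C^1(\bbR^2_{+,+})]^{m_2\times m_1}$ follows from $\Phi\in[C^2]^{m_2\times m_1}$ of Proposition \ref{PnH} together with \eqref{044}, which exhibits $\p_\ell\cla$ as a continuous function.

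For the converse I would proceed in three steps. (i) Starting from the given $\cla(\dott)=\cla(\dott,0)$, I would run the Sakhnovich operator-identity (or $S$-node) construction on each interval $[0,\eT]$: the positive operator $S=S_{\eT}$ of \eqref{053}, the integration operator, and the map $\Pi$ built from $\cla$ satisfy an operator identity of the schematic form $AS-SA^*=i\Pi J\Pi^*$, and the hypothesis $S_{\eT}>0$ of \eqref{053S} guarantees the invertibility needed to define the associated transfer function, hence a potential $v\in[C^1([0,\infty))]^{m_1\times m_2}$ of a Dirac system \eqref{A2}, normalized so that $v(\ell)=-i\cla(0,\ell)^*$. (ii) By the direct part already established, the $\cla$-function $\widetilde{\cla}$ of this reconstructed system satisfies \eqref{044} and has initial data $\widetilde{\cla}(\dott,0)=\cla(\dott)$. (iii) It then remains to show that \eqref{044} with initial condition \eqref{062} admits at most one $C^1$ solution; I would obtain this by applying the Laplace transform in $x$, which converts \eqref{044} into the Riccati evolution in $\ell$ displayed above for $F(\lambda,\ell)=\int_0^\infty e^{-\lambda x}\cla(x,\ell)\,dx$, an ODE with locally Lipschitz right-hand side whose solution is uniquely determined by $F(\lambda,0)$; injectivity of the transform then forces $\cla=\widetilde{\cla}$. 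Consequently the solution of \eqref{044} exists, is unique, and is realized by the Dirac system produced in step (i).

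The main obstacle is step (i) of the converse: converting the abstract positivity $S_{\eT}>0$ into a genuine, sufficiently smooth potential. This is where the bulk of the analytic work lies — verifying the operator identity, controlling $S_{\eT}^{-1}$ and its dependence on $\eT$, extracting $v$ from the transfer function, and establishing $v\in C^1$. The direct part is comparatively routine once the Riccati equation is in place, the only delicate points being the high-energy justification of $\Phi(0,\ell)\equiv 0$ (which requires the uniformity in $\ell$ supplied by Proposition \ref{PnH}) and the legitimacy of the termwise Laplace inversion.
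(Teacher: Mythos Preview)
Your overall architecture matches the paper's: the Riccati equation for $\vp_\ell$ drives the direct part, and for the converse you build a Dirac system from $\cla(\cdot,0)$ via the $S$-node construction and then invoke the direct part for existence. On the direct side, however, you underestimate one step: the assertion that ``$\partial_\ell\vp_\ell$ becomes the transform of $\partial_\ell\cla$'' requires differentiating $\vp_\ell(z)=-\int_0^\infty e^{2ixz}\cla(x,\ell)\,dx$ under the integral, and this is precisely where the paper spends its Appendix~A. The paper first reduces to compactly supported $v$, expands $\vp_\ell$ as a Neumann series in $V$, compares termwise with an explicitly solvable scalar model to obtain summable pointwise bounds on each $\partial_\ell\cla_k$, and only then passes the derivative inside. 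Your sketch gives no mechanism for this control.

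The more serious issue is your uniqueness argument in step~(iii). Laplace-transforming \eqref{044} in $x$ yields, for each $\lambda$,
\[
\partial_\ell F(\lambda,\ell)=\lambda F(\lambda,\ell)-\cla(0,\ell)+F(\lambda,\ell)\,\cla(0,\ell)^*F(\lambda,\ell),
\]
but this is \emph{not} a closed ODE in $F(\lambda,\cdot)$: the coefficient $\cla(0,\ell)$ is the boundary trace of the unknown solution, so two putative solutions $\cla_1,\cla_2$ give rise to \emph{different} Riccati equations (with coefficients $\cla_1(0,\ell)$ and $\cla_2(0,\ell)$), and standard ODE uniqueness does not apply. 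Recovering $\cla(0,\ell)$ from the family $\{F(\lambda,\ell)\}_\lambda$ is an inverse-Laplace-then-evaluate operation with no evident Lipschitz bound, and there is the prior problem that an arbitrary $C^1$ solution of \eqref{044} is not known a priori to have exponential growth in $x$, so the Laplace transform may not even exist. The paper sidesteps all of this by working directly on \eqref{044}: the substitution $f(r,\ell)=\cla(r-\ell,\ell)$ kills the $\partial_x$ term and turns \eqref{044} into a pure Volterra-in-$\ell$ integral relation, after which a Gronwall estimate on $\max_{\ell\le r\le a}\|f_1(r,\ell)-f_2(r,\ell)\|$ gives uniqueness on every triangle $x+\ell\le a$.
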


One notes that in analogy to \eqref{A1}, \eqref{044} is also independent of the spectral parameter $z$. 

Given these facts, we can summarize the direct and inverse $\cla$-function problem as follows upon introducing the 
set 
\begin{align}
\begin{split} 
& \boldsymbol{\cla} = \big\{\cla \in [C^1([0,\infty))]^{m_2 \times m_1} \big| S:= S_{\eT} \, \text{defined in \eqref{053} 
satisfies $S_{\eT} > 0$} \\
& \hspace*{8.3cm} \text{for all $\eT \in (0,\infty)$} \big\}.   
 \end{split} 
\end{align} 
\noindent 
{\bf Direct Problem:} \\[2mm] 
\vspace{5pt} 
\fbox{\begin{minipage}{354pt}
\begin{align}
& v \in \big[C^1([0,\infty))\big]^{m_1 \times m_2} 
\longrightarrow \vp_{\ell}(z) 
\xrightarrow[inverse \, Laplace \, transform]{\text{by \eqref{Repr0}}} \Phi(x, \ell), \; (x,\ell) \in \bbR^2_{+,+} \no \\
& \quad \xrightarrow{\text{by \eqref{042}}} \cla(x,\ell) = \f{\partial}{\partial x} \Phi(x,\ell), \; \; (x,\ell) \in \bbR^2_{+,+}   
\longrightarrow \cla(\dott,0) := \cla(\dott) \in \boldsymbol{\cla}.  \lb{1.17a}
\end{align}
\end{minipage}} 
\vspace{5pt}

\noindent 
{\bf Inverse Problem:} \\[2mm] 
\vspace{5pt} 
\fbox{\begin{minipage}{354pt}
\begin{align}
\begin{split}
& \cla(\dott) \in \boldsymbol{\cla} \xrightarrow{\text{by \eqref{044}, \eqref{062}}} \cla(\dott, \dott) \in 
\big[C^1(\bbR^2_{+,+})\big]^{m_2 \times m_1}       \\
& \quad \xrightarrow{\text{by \eqref{v}}} v = - i \cla(0, \dott)^* \in \big[C^1([0,\infty))\big]^{m_1 \times m_2}.     \lb{1.17b}
\end{split} 
\end{align}
\end{minipage}} 
\vspace{10pt}

While we focus for convenience on half-line Dirac-type systems \eqref{A2} and global solutions $\cla(\dott,\dott)$ of \eqref{044} on $\bbR^2_{+,+}$, our results apply locally just as well; this fact is further discussed in Remark \ref{r6.4} and in the local Borg--Marchenko uniqueness result, Theorem \ref{t6.5}. 

Although the $\cla$-function equation is particularly interesting in connection with inverse problems, we here note 
that it also characterizes shifts (i.e., translations) of Dirac-type systems. As one of our results that is of independent interest, we also mention the series representation of the Weyl--Titchmarsh function, which was derived in the process 
of the proof of Theorem \ref{AeqfD} (see Lemma \ref{lA.3}).

\begin{remark} \lb{r1.7}
$(i)$ Our results apply, in particular, to the special class of $m_1 \times m_1$ matrix-valued Schr\"odinger operators. The latter case is equivalent  to the subclass 
$m_1=m_2$ of the so called supersymmetric Dirac-type operators for which $v=-v^*$ holds.  For a detailed discussion of supersymmetric Dirac-type operators and their close (essentially, one-to-one) relations with (matrix-valued) Schr\"odinger operators via the celebrated Miura-type transformations (i.e., via appropriate Riccati-type relations) we refer, for instance, to \cite{BGGSS87, GSS91, GGHT12, EGNT14, EGNST15}. \\[1mm] 
$(ii)$ In the special scalar 
context, $m_1 = m_2 = 1$, the fact \eqref{1.17b} represents the Dirac-type analog of the Schr\"odinger operator results \eqref{0.23}--\eqref{A1+a} due to Remling \cite{Re03}.  \hfill $\diamond$
\end{remark}

The precise connection between the $\cla$-function and the underlying matrix-valued spectral function, and hence the connection between $\cla$ and inverse spectral theory, is in preparation \cite{GS20}.  
 
Finally, we briefly summarize the notation used in this paper:   
$I_{r}$ and $0_r$ are the $r \times r$ identity and zero matrix in $\bbC^r$, respectively ($r \in \bbN$). Similarly, $0_{r_1 \times r_2}$ represents the (rectangular) $r_1 \times r_2$ zero matrix ($r_1, r_2 \in \bbN$).

Throughout this paper, for $X$ a given space, $A \in X^{r_1 \times r_2}$ represents an $r_1 \times r_2$ block (operator) matrix $A$ with entries in $X$ ($r_1, r_2 \in \bbN$). If $r_2 = 1$, we use the notation 
$A \in X^{r_1}$.

We use the abbreviation $L^p(\Omega) \equiv L^p(\Omega; dx)$ ($\Omega \subseteq \bbR$ measurable, 
$p \geq 1$) whenever Lebesgue measure $dx$ is understood. 

$\big[L^2((0, \ell))\big]^r$ is the class of square integrable vector functions on $(0, \ell)$
with values in $\BC^{r}$ and scalar product 
\begin{equation} 
(f,g)=\int_0^\ell g(x)^*f(x) \, dx = \sum_{s=1}^r \int_0^{\ell} \ol{g_s(x)} f_s(x) \, dx,
\end{equation} 
where $f=(f_1, \dots , f_r)^{\top}, g=(g_1,\dots , g_r)^{\top} \in \big[L^2((0, \ell))\big]^r$ ($r \in \bbN$). 

Moreover, $\cB(\cH)$ denotes the class of bounded linear operators, which map the Hillbert space $\cH$ into itself. Bounded operators which map the Hilbert space $\cH_1$ into the Hilbert 
space $\cH_2$ are denoted by $\cB(\cH_1, \cH_2)$.

By $C^k(\Om)$ we denote the usual class of functions on the open set $\Om \subseteq \bbR^n$ for some $n \in \bbN$, whose partial derivatives up to order $k$ are continuous, and $C(\Om)$ stands for continuous functions on $\Om$. Similarly, $C^k(\ol{\Om})$ denotes the space of functions whose partial derivatives up to order $k$ are bounded and uniformly continuous on $\Om$ (see, e.g., 
\cite[Ch.~1]{AF03}). When functions belong to $C^k(\Om)$, we say that they are $k$ times continuously differentiable. 

We employ the notation $\bbC_+ = \{z \in \bbC \, | \, \Im(z) > 0\}$ for the open complex upper half-plane.

\section{Preliminaries} \lb{s2}

Fundamental solutions of \eqref{A2} play an essential role in this theory
(it is always assumed that the entries of $v$ are locally integrable on $[0,\infty)$ and so the fundamental solutions exist).
See Notation \ref{NnFS} and formula \eqref{e1} for the  normalizations of the considered fundamental solutions.
In view of \eqref{e1} one has the equality
\begin{align}\lb{e3}&
u_{0}(x,z)=u_{\ell}(x,z)u_{0}(\ell,z).
\end{align}
Taking into account Definition \ref{DnWF} and formula \eqref{e3} one derives 
\begin{align}\lb{e4}&
u_{\ell}(\dott ,z)u_{0}(\ell,z)\begin{bmatrix}I_{m_1} \\ \vp_{0}(z)\end{bmatrix} 
\in \big[L^2((0, \infty))\big]^{m \times m_1}.
\end{align}
According to the representation \cite[eq.~(2.26)]{SSR13} of $\vp_{0}(z)$ (see \cite[Cor.~2.21]{SSR13})
and by the corresponding formula (2.29) in \cite{SSR13}, one obtains 
\begin{align}\lb{e5}&
\begin{bmatrix}I_{m_1} & \vp_{0}(z)^*\end{bmatrix} u_{0}(\ell,z)^* J u_{0}(\ell,z)\begin{bmatrix}I_{m_1} \\ \vp_{0}(z)\end{bmatrix} \geq 0_{m_1}.
\end{align}
Next, similarly to \eqref{A3}, we now partition  $u_{0}(\ell,z)$ into four blocks: $u_{0}(\ell,z)=\{u_{0,jk}(\ell,z)\}_{j,k=1}^2$. Then formula \eqref{e5} yields
the fact
\begin{align}\lb{e6}
\det\big(u_{0,11}(\ell,z)+u_{0,12}(\ell,z)\vp_0(z)\big) \neq 0.
\end{align}
Therefore, one obtains 
\begin{align}& \lb{e7}
u_{0}(\ell,z)\begin{bmatrix}I_{m_1} \\ \vp_{0}(z)\end{bmatrix} =\begin{bmatrix}I_{m_1} \\ \wt \vp_{\ell}(z)\end{bmatrix} \big(u_{0,11}(\ell,z)+u_{0,12}(\ell,z)\vp_0(z)\big),
\\  & \lb{e8} \wt \vp_{\ell}(z):=\big(u_{0,21}(\ell,z)+u_{0,22}(\ell,z)\vp_0(z)\big)\big(u_{0,11}(\ell,z)
+u_{0,12}(\ell,z)\vp_0(z)\big)^{-1}.
\end{align}
Formulas \eqref{e4}, \eqref{e6}, and \eqref{e7} imply that
\begin{align}\lb{e9}&
u_{\ell}(\dott ,z)\begin{bmatrix}I_{m_1} \\ \wt \vp_{\ell}(z)\end{bmatrix} \in \big[L^2((\ell, \infty))\big]^{m \times m_1}.
\end{align}
Comparing \eqref{e2} and \eqref{e9}, and taking into account that the Weyl--Titchmarsh function is unique, one infers that $\vp_{\ell}(z)=\wt \vp_{\ell}(z)$,
and hence \eqref{e8} takes on the form
\begin{align}& \lb{e10}
 \vp_{\ell}(z)=\big(u_{0,21}(\ell,z)+u_{0,22}(\ell,z)\vp_0(z)\big)\big(u_{0,11}(\ell,z)
 +u_{0,12}(\ell,z)\vp_0(z)\big)^{-1}.
\end{align}

\begin{theorem} Suppose that $v \in \big[L^1((0,R))\big]^{m_1 \times m_2}$ for all $R > 0$ and assume that  $u_{0}(\ell,z)=\{u_{0,jk}(\ell,z)\}_{j,k=1}^2$ represents the value of the fundamental solution $u_0$ of the system
\eqref{A2} at the point $x=\ell$. Then the Weyl--Titchmarsh functions $ \vp_{\ell}$ and $ \vp_{0}$ of the system \eqref{A2} on the semi-axes $[\ell, \infty)$
and $[0, \infty)$, respectively, are connected by relation \eqref{e10}.
\end{theorem}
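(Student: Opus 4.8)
The plan is to derive \eqref{e10} by combining the cocycle property of fundamental solutions with the uniqueness of the Weyl--Titchmarsh function, exactly along the chain of identities \eqref{e3}--\eqref{e9} assembled in the preamble. First I would record the composition identity \eqref{e3}, namely $u_0(x,z)=u_\ell(x,z)u_0(\ell,z)$: both sides solve the same first-order linear matrix equation \eqref{A2} in $x$ and agree at $x=\ell$ (using $u_\ell(\ell,z)=I_m$), so they coincide by uniqueness of solutions of a linear ODE. Substituting \eqref{e3} into the defining $L^2$-condition \eqref{e2} for $\vp_0$ then yields \eqref{e4}, i.e.\ the $m\times m_1$ matrix $u_\ell(\dott,z)u_0(\ell,z)\begin{bmatrix}I_{m_1}\\ \vp_0(z)\end{bmatrix}$ lies in $\big[L^2((\ell,\infty))\big]^{m\times m_1}$.

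The heart of the matter, and the step I expect to be the main obstacle, is the invertibility \eqref{e6} of $u_{0,11}(\ell,z)+u_{0,12}(\ell,z)\vp_0(z)$. Here I would invoke the $J$-positivity relation \eqref{e5}, which follows from the representation and contractivity results of \cite{SSR13}. Concretely, suppose for contradiction that $(u_{0,11}+u_{0,12}\vp_0)w=0$ for some nonzero $w\in\bbC^{m_1}$, and set $\eta:=(u_{0,21}+u_{0,22}\vp_0)w\in\bbC^{m_2}$, so that $u_0(\ell,z)\begin{bmatrix}I_{m_1}\\ \vp_0\end{bmatrix}w=\begin{bmatrix}0\\ \eta\end{bmatrix}$. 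Evaluating the quadratic form in \eqref{e5} on $w$ then collapses to $\begin{bmatrix}0 & \eta^*\end{bmatrix}J\begin{bmatrix}0\\ \eta\end{bmatrix}=-\eta^*\eta\leq 0$; combined with \eqref{e5} this forces $\eta=0$, whence $u_0(\ell,z)\begin{bmatrix}w\\ \vp_0 w\end{bmatrix}=0$. Since the fundamental solution $u_0(\ell,z)$ is invertible, we conclude $w=0$, a contradiction, establishing \eqref{e6}.

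Granted \eqref{e6}, I would factor the fixed (in $x$) invertible matrix $u_{0,11}+u_{0,12}\vp_0$ out on the right, obtaining \eqref{e7} with $\wt\vp_\ell$ defined by \eqref{e8}. Because multiplication on the right by this constant invertible matrix preserves $L^2$-membership, \eqref{e4} yields $u_\ell(\dott,z)\begin{bmatrix}I_{m_1}\\ \wt\vp_\ell(z)\end{bmatrix}\in\big[L^2((0,\infty))\big]^{m\times m_1}$, and restriction to $(\ell,\infty)$ gives \eqref{e9}. Finally, since $\wt\vp_\ell$ is holomorphic in $\bbC_+$ and satisfies the $L^2$-criterion of Definition \ref{DnWF}, the uniqueness of the Weyl--Titchmarsh function recorded in Remark \ref{RkWF} forces $\vp_\ell=\wt\vp_\ell$, which is precisely \eqref{e10}. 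The only point requiring slight extra care beyond the algebra is checking that $\wt\vp_\ell$ is indeed holomorphic in $\bbC_+$; this follows from holomorphy of the entries of $u_0(\ell,\dott)$ and of $\vp_0$ together with the now-established nonvanishing \eqref{e6}, which guarantees the inverse in \eqref{e8} depends holomorphically on $z$.
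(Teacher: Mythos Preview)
Your proposal is correct and follows essentially the same route as the paper: the argument \eqref{e3}--\eqref{e10} leading up to the theorem statement is exactly the chain you describe, and the paper likewise invokes \cite{SSR13} for the positivity \eqref{e5} and the uniqueness of the Weyl--Titchmarsh function from Remark~\ref{RkWF} to conclude. Your explicit contradiction argument deducing \eqref{e6} from \eqref{e5}, and your remark on holomorphy of $\wt\vp_\ell$, fill in details the paper leaves implicit.
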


\begin{proposition} \lb{PnHCtr}
Let some $m \times m$ matrix function $w(x)$ satisfy the differential equation $w^{\prime}(x)=G(x)w(x)$, where $G \in \big[L^1((0,R))\big]^{m \times m}$ for all $R > 0$ has the block form
$G=\{G_{jk}\}_{j,k=1,2}^2$ and $G_{jj}$ is an $m_j \times m_j$ matrix-valued function $(j=1,2)$.  Let $P$ 
and $Q$ be $m_1 \times m_1$ and $m_2 \times m_1$
matrices, respectively, and suppose that $\det(G_{11}(x)P+G_{12}(x)Q)\neq 0$ holds on some open subset 
$\Omega \subseteq \BR$. Then $($for a.e.~$x\in \Omega$$)$, the linear fractional transformation 
\begin{equation}
\phi(x)=(G_{21}(x)P+G_{22}(x)Q)(G_{11}(x)P+G_{12}(x)Q)^{-1} 
\end{equation} 
satisfies the following matrix-valued Riccati-type differential equation $($for a.e.~$x\in \Omega$$)$
\begin{align}& \lb{e11}
\phi^{\prime}(x)=-\phi(x)G_{12}(x)\phi(x)-\phi(x)G_{11}(x)+G_{22}(x)\phi(x)+G_{21}(x).
\end{align}
\end{proposition}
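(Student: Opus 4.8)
The plan is to recognize \eqref{e11} as the standard Riccati equation satisfied by a linear-fractional transform of a fundamental solution, and to produce it by a single direct differentiation. First I would write the solution $w$ in the same $2\times 2$ block form as $G$, say $w=\{w_{jk}\}_{j,k=1}^2$ with $w_{jk}$ of size $m_j\times m_k$, and abbreviate the denominator and numerator of the transformation built from $w$ by
\[
a(x):=w_{11}(x)P+w_{12}(x)Q,\qquad b(x):=w_{21}(x)P+w_{22}(x)Q,
\]
so that $a$ is $m_1\times m_1$, $b$ is $m_2\times m_1$, and $\phi=b\,a^{-1}$ on $\Om$ (where $\det a\neq 0$ by hypothesis). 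This is precisely the transformation already appearing in \eqref{e8}, with $w=u_0(\dott,z)$, $P=I_{m_1}$, and $Q=\vp_0(z)$.

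Next I would read off the block components of $w'=Gw$, namely $w_{1k}'=G_{11}w_{1k}+G_{12}w_{2k}$ and $w_{2k}'=G_{21}w_{1k}+G_{22}w_{2k}$ for $k=1,2$ (for a.e.\ $x$, since $w$ is merely absolutely continuous when $G\in[L^1((0,R))]^{m\times m}$). Multiplying these on the right by the \emph{constant} matrices $P$ and $Q$ and summing the $k=1$ and $k=2$ contributions, the combinations $a$ and $b$ close up into the linear first-order system
\[
a'=G_{11}a+G_{12}b,\qquad b'=G_{21}a+G_{22}b \quad (\text{a.e.\ }x\in\Om).
\]
This is the heart of the argument: the affine combination fixed by $P,Q$ is transported through the system with no derivatives of $G$ ever appearing, which is exactly why \eqref{e11} is free of $G'$ (and, in the application, of $z'$).

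Finally I would differentiate $\phi=b\,a^{-1}$ by the product rule together with the identity $(a^{-1})'=-a^{-1}a'a^{-1}$ (valid a.e.\ on $\Om$, where $a$ is absolutely continuous and invertible), obtaining $\phi'=b'a^{-1}-b\,a^{-1}a'a^{-1}$. Substituting the two displayed relations for $a'$ and $b'$, using $a\,a^{-1}=I_{m_1}$, and regrouping $b\,a^{-1}=\phi$ wherever it occurs yields
\[
\phi'=G_{21}+G_{22}\phi-\phi G_{11}-\phi G_{12}\phi,
\]
which is precisely \eqref{e11}. The only point requiring care is bookkeeping: since the matrix factors do not commute, the left--right order must be preserved throughout, and in particular the inverse-derivative term must be expanded \emph{before} $a'$ is replaced. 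Beyond this the computation is routine, and no regularity past ``a.e.'' is needed, as $G\in[L^1((0,R))]^{m\times m}$ forces $a,b$ to be only absolutely continuous.
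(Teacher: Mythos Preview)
The paper states this proposition without proof, so there is nothing to compare against; your argument is the standard one and is correct.

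One point worth making explicit: you tacitly (and correctly) resolved what appears to be a typo in the statement. As written, the proposition defines $\phi$ via the blocks $G_{jk}$ of the coefficient matrix, but the application in \eqref{e10} and the very form of \eqref{e11} (which contains no $G'$) make clear that the intended definition is
\[
\phi(x)=(w_{21}(x)P+w_{22}(x)Q)(w_{11}(x)P+w_{12}(x)Q)^{-1},
\]
with the nonvanishing hypothesis likewise on $\det(w_{11}P+w_{12}Q)$. Your introduction of $a,b$ built from the blocks of the \emph{solution} $w$ is exactly this reading, and from there the derivation $a'=G_{11}a+G_{12}b$, $b'=G_{21}a+G_{22}b$, followed by $\phi'=b'a^{-1}-\phi\,a'a^{-1}$, is routine and yields \eqref{e11} as you wrote. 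The regularity caveat (absolute continuity of $a,b$, differentiation a.e.) is handled correctly.
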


In view of \eqref{e2} and \eqref{e10}, we rewrite \eqref{e11}, for the case of the Weyl--Titchmarsh function $\vp_{\ell}(z)$, in the form, 
\begin{align}& \lb{e12}
\frac{d}{d \ell}\vp_{\ell}(z)=- i \big(\vp_{\ell}(z)v(\ell)\vp_{\ell}(z)+v(\ell)^*+2z\vp_{\ell}(z)\big).
\end{align}

\section{The $\cla$-Function for Dirac-Type Systems: Part I} \lb{s3}

Analogs of the $\cla$-function for Dirac-type systems were considered in \cite{Sa02} 
 for the case $m_1=m_2$ (see also some related results in \cite{Sa88}). The case of rectangular potentials $v$ was dealt with in \cite{FKRS12, SSR13, Sa15}. 

In order to construct the $\cla$-equation for Dirac-type system we need an analog of the $A$-function representation
\eqref{A1+} of the Weyl--Titchmarsh function $\vp$. 
The results were developed further in \cite{Sa02} and for the rectangular $m_2 \times m_1$ Weyl--Titchmarsh functions
in \cite{FKRS12, Sa15}.

In this paragraph, we  present some  results from \cite{Sa15} (more precisely, Proposition 3.1, Theorem 4.1, and  Corollary 4.2 in
\cite{Sa15}) that we need for our considerations.  We recall that $u_0(x,z)$ is the fundamental solution of
system \eqref{A2} and now partition $u_0(x,0)$ into block rows
\begin{align}& \lb{e13}
\b(x)=\begin{bmatrix}\b_1(x) & \b_2(x)\end{bmatrix} 
:= \begin{bmatrix}I_{m_1} & 0_{m_1 \times m_2}\end{bmatrix} u_0(x,0), 
\\ & \lb{e14}
\g(x)=\begin{bmatrix}\g_1(x) & \g_2(x)\end{bmatrix}
:=\begin{bmatrix}0_{m_2 \times m_1} & I_{m_2} \end{bmatrix} u_0(x,0),
\end{align}
where $\b_1$ and $\g_2$ are $m_1\times m_1$ and $m_2\times m_2$ matrix-valued functions, respectively. In view of \eqref{A2} one obtains
\begin{align}& \lb{e14+}
 \g^{\prime}(x) =-i v(x)^* \b(x).
\end{align}
The system \eqref{A2} also yields $u_0(x,0)^* J u_0(x,0)= J =u_0(x,0) J u_0(x,0)^*$. In particular, one has
\begin{align}& \lb{e15-}
 \g(x) J \g(x)^* \equiv -I_{m_2}, \quad  \b(x) J \g(x)^* \equiv 0_{m_1 \times m_2}.
\end{align}
From \eqref{e14+} and the second equality in \eqref{e15-} it follows that
\begin{align}& \lb{e15'}
 \g^{\prime}(x) J \g(x)^* \equiv 0_{m_2}.
 \end{align}
The following operators $K$, acting in $\big[L^2((0,\eT))\big]^{m_2}$ ($\eT \in (0,\infty)$),  
\begin{align}& \lb{e15}
(K f)(x)=i \g(x) \int_0^x J \g(t)^* f(t) \, dt, \quad f \in \big[L^2((0,\eT))\big]^{m_2}, 
\end{align}
play an essential role in solving the inverse problem of recovering $v$ from the Weyl--Titchmarsh function $\varphi$.

\begin{proposition} \lb{PnSimN} Suppose that $v \in \big[L^2((0,\eT))\big]^{m_1 \times m_2}$ 
$($$\eT \in (0,\infty)$$)$ in the Dirac-type  system \eqref{A2}, and let $K$ be given by  \eqref{e15}, where $\g$ is defined in \eqref{e14}.
Then there is a similarity transformation operator $E \in \cB\Big(\big[L^2((0, \eT))\big]^{m_2}\Big)$ such that 
\begin{align}
& K=E \cI E^{-1}, \quad  (\cI f)(x):=-i \int_0^x f(t) \,dt \quad (x > 0),   \lb{i1} \\
\begin{split} 
& (E f)(x)= f(x) + \int_0^x N(x,t) f(t) \,dt,  \quad f \in \big[L^2((0,\eT))\big]^{m_2}    \\
&\hspace*{5.8cm} (\text{for a.e.~$x \in (0, \eT)$}),   \lb{i2} 
\end{split} \\
&(E^{-1}\g_2)(x)\equiv I_{m_2} \quad (x > 0),     \lb{i3} 
\end{align}
where 
 $N(\dott, \dott)$ is a Hilbert--Schmidt integral kernel, $E^{-1}$ is applied to $\g_2(x)$ in \eqref{i3} columnwise,
 and \eqref{i3} has to be understood in the following way: the image of the $k$-th column of $\g_2(x)$ equals 
 the constant vector function given by the $k$-th column of $I_{m_2}$ $(1\leq k\leq m_2)$.
Moreover, the operators $E^{\pm 1}$ map differentiable vector-valued functions with a square integrable derivative into vector-valued functions of the same class.
\end{proposition}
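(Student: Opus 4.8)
The plan is to realize $E$ as a lower–triangular transformation operator, i.e.\ of the form $I + (\text{Volterra integral operator})$ as in \eqref{i2}, that simultaneously intertwines the model operator $\cI$ with $K$ and carries the constant profile $I_{m_2}$ onto $\g_2$. Both $K$ and $\cI$ are Volterra operators on $\big[L^2((0,\eT))\big]^{m_2}$ sharing the same chain of invariant subspaces $\{f : f|_{(0,s)} = 0\}$ (for $K$ this is immediate from \eqref{e15}, since $(Kf)(x)$ depends only on $f|_{(0,x)}$, and likewise for $\cI$), so it is natural to seek the conjugating operator in the triangular form \eqref{i2}. An operator of this form is automatically bounded and boundedly invertible once its kernel $N$ is, say, Hilbert–Schmidt: the Volterra (quasinilpotent) structure makes the Neumann series for $\big(I+(\text{Volterra})\big)^{-1}$ converge, and $E^{-1}$ is again of the form $I + (\text{Volterra kernel})$. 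Thus the genuine similarity \eqref{i1} will follow from the intertwining relation $KE = E\cI$ for a suitable $N$, while \eqref{i3} is precisely the statement $E[I_{m_2}] = \g_2$ read columnwise.

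The heart of the construction is the determination of the kernel $N$. Translating the intertwining relation $KE = E\cI$ into an identity between the Volterra kernels of the two sides (after interchanging the orders of integration in the resulting double integrals) yields a Goursat–type integro–differential equation for $N(x,t)$ on the triangle $0 \le t \le x \le \eT$, whose data along the diagonal $x=t$ is fixed by the normalization $E[I_{m_2}] = \g_2$. The structural identities recorded in the Preliminaries are what render this equation non-degenerate and solvable: $\g(x) J \g(x)^* \equiv -I_{m_2}$ from \eqref{e15-} keeps the coefficient of the leading (diagonal) term invertible, while $\g^{\prime}(x) J \g(x)^* \equiv 0$ from \eqref{e15'}, together with $\g^{\prime}(x) = -i v(x)^* \b(x)$ and $\b(x) J \g(x)^* \equiv 0$ from \eqref{e14+}, \eqref{e15-}, control the lower–order terms, so that the equation can be solved by the usual successive–approximation (Picard) scheme and produces a Hilbert–Schmidt kernel. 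The initial value $\g(0) = \begin{bmatrix}0_{m_2\times m_1} & I_{m_2}\end{bmatrix}$ is exactly what matches the diagonal value of the constructed kernel to the normalization, so that the \emph{same} $N$ yields both \eqref{i1} and \eqref{i3}.

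For the regularity assertion, note that $\g$ inherits the smoothness of $v$ through the Dirac-type system \eqref{A2}; under the stated hypotheses on $v$ the successive approximations converge in a space of correspondingly differentiable kernels, so that $N(\dott,\dott)$ is not merely Hilbert–Schmidt but continuously differentiable up to the diagonal. This is precisely what is needed to show that $E^{\pm 1}$ map a differentiable vector function with square–integrable derivative to another such function: differentiating $(Ef)(x) = f(x) + \int_0^x N(x,t) f(t)\,dt$ produces the boundary contribution coming from $N(x,x)f(x)$ together with $\int_0^x \p_x N(x,t) f(t)\,dt$, both of which remain square integrable, and the identical analysis applies to $E^{-1}$ via its own Volterra kernel.

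The main obstacle I anticipate is the simultaneous enforcement of the two conditions $KE = E\cI$ and $E[I_{m_2}] = \g_2$: neither forces the other a priori, and one must verify that the unique triangular $N$ solving the Goursat equation is genuinely consistent with the normalization. This consistency is exactly where the interplay of $\g J \g^* \equiv -I_{m_2}$, $\b J \g^* \equiv 0$, and $\g(0) = \begin{bmatrix}0_{m_2\times m_1} & I_{m_2}\end{bmatrix}$ becomes indispensable; establishing it, along with the attendant Hilbert–Schmidt and differentiability bounds on $N$, is the technical core of the argument. By contrast, boundedness, invertibility, and the passage from intertwining to the honest similarity \eqref{i1} are comparatively routine consequences of the Volterra structure.
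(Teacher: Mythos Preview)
The paper does not prove this proposition; it is quoted from \cite[Proposition~3.1]{Sa15}. However, the paper sketches the construction in Remark~\ref{Rktilde}, and that construction is \emph{not} the one you outline. In \cite{Sa15} the operator $E$ is built in two decoupled steps as a product $E=\wt E E_0$: first, a triangular similarity $\wt E$ with $K=\wt E\,\cI\,\wt E^{-1}$ is produced from the \emph{resolvent} of $K$ via the general Volterra theory of L.~A.~Sakhnovich \cite{LAS} (no Goursat equation is solved, and condition \eqref{i3} is ignored at this stage); second, the normalization \eqref{i3} is enforced by post-composing with the lower-triangular \emph{convolution} operator $E_0$ with kernel $N_0(x-t)=(\wt E^{-1}\g_2)^{\prime}(x-t)$. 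Because $E_0$ is a convolution operator it commutes with $\cI$, so the similarity \eqref{i1} survives automatically, while by construction $E_0 I_{m_2}=\wt E^{-1}\g_2$, hence $E^{-1}\g_2=E_0^{-1}\wt E^{-1}\g_2=I_{m_2}$.

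Your single-step Goursat scheme is a genuinely different route, and the obstacle you yourself flag is a real one that your sketch does not resolve. Writing out $KE=E\cI$ for a kernel $N(x,t)$ gives the integral identity
\[
I_{m_2}+\g(x)J\g(t)^*+\int_t^x N(x,s)\,ds+\g(x)\int_t^x J\g(s)^*N(s,t)\,ds=0_{m_2},
\]
(compare \eqref{e33}), and differentiating shows, using \eqref{e15-} and \eqref{e15'}, that the relation is \emph{identically satisfied on the diagonal} regardless of $N(x,x)$; in particular the intertwining condition does not take the form of a Goursat problem with freely prescribable diagonal data. The freedom in the family of triangular intertwiners is instead parametrized by operators commuting with $\cI$ (triangular convolutions), which is precisely what the two-step construction exploits. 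Your claim that the normalization $E[I_{m_2}]=\g_2$ ``fixes the diagonal'' is also not quite right: it gives $\int_0^x N(x,t)\,dt=\g_2(x)-I_{m_2}$, a condition on an integral rather than on $N(x,x)$. So the consistency you worry about is not resolved by your outline, whereas the paper's two-step factorization sidesteps it completely. A secondary point: the last mapping assertion must hold under the mere hypothesis $v\in\big[L^2((0,\eT))\big]^{m_1\times m_2}$, so arguing via ``continuous differentiability of $N$ up to the diagonal'' inherited from smoothness of $v$ overshoots the hypothesis; in \cite{Sa15} this is handled through the structure of $\wt E$ and $E_0$ separately.
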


\begin{remark}\lb{RkIm} Formulas similar to \eqref{i3} will also appear in the remainder of this paper and we understand them in the analogous manner: namely, operators are applied to matrix functions columnwise, and $I_{m_2}$ in this context is understood as
the corresponding constant matrix function, which takes the values $I_{m_2}$ at each $x$.
${}$ \hfill $\diamond$
\end{remark}

\begin{remark}\lb{Rktilde}
The operator $E$ satisfying the conditions of Proposition~\ref{PnSimN} is constructed in the proof
of \cite[Proposition~3.1]{Sa15} as a product $E=\wt E E_0$, where $\wt E$ satisfies all the conditions
of  Proposition \ref{PnSimN} excluding, possibly, \eqref{i3} and
\begin{align}      
\begin{split} 
& (E_0 f)(x)= f(x) + \int_0^x N_0(x-t) f(t) \, dt, \quad f \in \big[L^2((0,\eT))\big]^{m_2},   \lb{i7} \\
& N_0(x):=\big(\wt E^{-1}\g_2\big)^{\prime}(x) \quad (\text{for a.e.~$x \in (0, \eT)$}). 
\end{split} 
\end{align} 
More precisely, there is a whole family of triangular operators satisfying all the conditions
of  Proposition \ref{PnSimN} excluding, possibly, \eqref{i3}, and $\wt E$ is one of them.
The construction of the similarity transformation operator $\wt E$ is based on the work \cite{LAS}
and uses the resolvent of $K$. We normalize $\wt E$ using the triangular convolution operator
$E_0$ so that \eqref{i3} holds for $E=\wt E E_0$. (One notes that $E_0$ depends on $\wt E$, see the definition of $N_0$ in \eqref{i7}.)
${}$ \hfill  $\diamond$
\end{remark}

Using the operator $E$ we now introduce the $m_2 \times m_1$ matrix-valued function 
\begin{align}& \lb{e16}
\Phi(x):=\big(E^{-1}\g_1\big)(x) \quad (0<x<\eT).
\end{align}
According to Proposition \ref{PnSimN}, $\Phi$ is differentiable and, in the case of locally
bounded potentials $v$ (see \cite[eq.~(2.154)]{SSR13}) the following equality holds:
\begin{align}& \lb{e16!}
v(x)=\big(i E\Phi^{\prime}\big)(x)^*  \quad (0<x<\eT).
\end{align} 

\begin{remark}\lb{Rktilde2}
We note that, by construction,  the function $N(x,t)$ in \eqref{i2} does not depend (for any fixed $\eT_0>0$ and values $x < \eT_0$) on 
the choice  of $\eT \geq  \eT_0$. This means that \eqref{e16} (for various values of $\eT$) uniquely determines $\Phi(x)$
on the whole half-axis $(0, \infty)$.
Taking into account \cite[Proposition 1.3]{Sa15}, we now reformulate Theorem 4.1  and  Corollary 4.2 in \cite{Sa15} in the following manner.  \hfill  $\diamond$
\end{remark}

\begin{proposition}\lb{cyHea} Let $\vp \equiv \vp_0$ be the Weyl--Titchmarsh function of the Dirac  system  \eqref{A2} on  $[0, \infty)$, assuming $v \in \big[L^2((0,R))\big]^{m_1 \times m_2}$ for all $R>0$. Then 
$($uniformly with respect to $\Re(z))$ one has 
\begin{align} \lb{i27}&
\vp(z) \underset{\Im (z) \to \infty}{=} 2i z\int_0^{\eT}e^{2i xz}\Phi(x) \, dx 
+ O\Big(z e^{2i \eT z} \big/ (\Im(z))^{1/2}\Big),
\end{align} 
implying 
\begin{align} \lb{Repr}&
\vp(z)=2i z\int_0^{\infty}e^{2i xz}\Phi(x) \, dx \quad (\Im (z) >0).
\end{align} 
\end{proposition}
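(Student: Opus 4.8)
The statement \eqref{i27} is a high-energy ($\Im(z)\to\infty$) expansion of the Weyl--Titchmarsh function, and the plan is to extract it from the similarity reduction furnished by Proposition~\ref{PnSimN}. The key point is that $E$ conjugates the convolution-type operator $K$ of \eqref{e15} into the elementary integration operator $\cI$ of \eqref{i1}, while simultaneously turning the zero-energy data $\g_1,\g_2$ into $\Phi=E^{-1}\g_1$ and the constant function $I_{m_2}=E^{-1}\g_2$ (see \eqref{i3}, \eqref{e16}). Since the full $z$-dependence of $\vp\equiv\vp_0$ is produced by a resolvent of $K$, conjugating by $E$ leaves a resolvent of $\cI$ acting on $\Phi$, and the exponential kernel $e^{2ixz}$ appearing in \eqref{i27} comes from precisely this resolvent. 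I would treat \eqref{i27} as the main analytic assertion and then deduce \eqref{Repr} from it by sending $\eT\to\infty$.

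First I would record the resolvent of $\cI$. From $(\cI f)(x)=-i\int_0^x f(t)\,dt$ one obtains, by summing the Neumann series $\sum_{n\geq 0}(2z)^n\cI^n$ (equivalently, by solving the corresponding Volterra equation),
\begin{align*}
\big((I-2z\cI)^{-1}f\big)(x)=f(x)-2iz\int_0^x e^{-2iz(x-t)}f(t)\,dt.
\end{align*}
Applying this to $f=\Phi$ and reading off the value at the endpoint $x=\eT$ yields the algebraic identity
\begin{align*}
2iz\int_0^{\eT}e^{2ixz}\Phi(x)\,dx=e^{2iz\eT}\Phi(\eT)-e^{2iz\eT}\big((I-2z\cI)^{-1}\Phi\big)(\eT).
\end{align*}
Since $K=E\cI E^{-1}$ and $E\Phi=\g_1$, one has $(I-2z\cI)^{-1}\Phi=E^{-1}(I-2zK)^{-1}\g_1$, so the integral in \eqref{i27} is, up to the boundary term $e^{2iz\eT}\Phi(\eT)$, the $E$-transform of a genuine $K$-resolvent applied to $\g_1$; this is exactly the object that represents $\vp(z)$ in the theory imported from \cite{Sa15} (Theorem~4.1 and Corollary~4.2). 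Identifying the two is the step that relies most heavily on the cited results, and it is the main obstacle to a fully self-contained proof.

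It then remains to estimate the remainder, which I would carry out with the Cauchy--Schwarz inequality. Writing the difference of $\vp(z)$ and the truncated integral as the tail $2iz\int_{\eT}^{\infty}e^{2ixz}\Phi(x)\,dx$ together with the boundary term above, and using
\begin{align*}
\Big(\int_{\eT}^{\infty}\big|e^{2ixz}\big|^2\,dx\Big)^{1/2}=\frac{e^{-2\Im(z)\eT}}{2\,\Im(z)^{1/2}}=\frac{\big|e^{2i\eT z}\big|}{2\,\Im(z)^{1/2}},
\end{align*}
both contributions are $O\big(z\,e^{2i\eT z}\big/\Im(z)^{1/2}\big)$. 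Because these bounds involve only $|e^{2i\eT z}|=e^{-2\Im(z)\eT}$ and $\Im(z)$, they hold uniformly in $\Re(z)$, which establishes \eqref{i27}. Finally, fixing $z\in\bbC_+$ and letting $\eT\to\infty$, the remainder tends to zero because $|e^{2i\eT z}|=e^{-2\Im(z)\eT}\to 0$ dominates the growth of $\Phi(\eT)$ and of $\|\Phi\|_{L^2((\eT,\infty))}$, while $2iz\int_0^{\eT}e^{2ixz}\Phi\,dx\to 2iz\int_0^{\infty}e^{2ixz}\Phi\,dx$ with the limiting integral absolutely convergent by Cauchy--Schwarz; this yields the global representation \eqref{Repr}. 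The one delicate point in making the last step rigorous is the integrability of $\Phi$ near $+\infty$, which I would verify from the construction underlying Proposition~\ref{PnSimN}.
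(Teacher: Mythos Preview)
The paper does not prove this proposition at all: as announced in Remark~\ref{Rktilde2}, it is simply a reformulation of Theorem~4.1 and Corollary~4.2 from \cite{Sa15} (together with \cite[Proposition~1.3]{Sa15}), stated without argument. So there is no ``paper's own proof'' to compare against, and your acknowledgment that the crucial identification of $\vp(z)$ with an $E$-transform of a $K$-resolvent ``relies most heavily on the cited results'' is exactly right; that is the entire content of the proposition.

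Your sketch does have the right architecture, and the resolvent identity for $\cI$ and the algebraic relation $2iz\int_0^{\eT}e^{2ixz}\Phi(x)\,dx=e^{2iz\eT}\Phi(\eT)-e^{2iz\eT}\big((I-2z\cI)^{-1}\Phi\big)(\eT)$ are both correct. But there are two genuine problems beyond the one you flag. First, your remainder estimate is circular: you write the difference $\vp(z)-2iz\int_0^{\eT}e^{2ixz}\Phi\,dx$ as the ``tail'' $2iz\int_{\eT}^{\infty}e^{2ixz}\Phi\,dx$, but this expression only makes sense once \eqref{Repr} is known, and \eqref{Repr} is what you are trying to prove. In the actual argument in \cite{Sa15} the remainder comes directly from estimating the $K$-resolvent on $[0,\eT]$ against $\vp(z)$, not from a tail integral. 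Second, your passage from \eqref{i27} to \eqref{Repr} assumes that $e^{-2\Im(z)\eT}$ ``dominates the growth of $\Phi(\eT)$'' and that $\Phi\in L^2((\eT,\infty))$; under the sole hypothesis $v\in L^2_{\loc}$ there is no a~priori pointwise or $L^2$ control on $\Phi$ at infinity, and the construction in Proposition~\ref{PnSimN} is purely local (for each fixed $\eT$) so it cannot supply one. The correct route is to note that the implied constant in the $O$-term of \eqref{i27} is controlled by $\|\Phi\|_{L^2((0,\eT))}$ for that particular $\eT$, so that for \emph{fixed} $z\in\bbC_+$ and $\eT\to\infty$ the exponential decay $e^{-2\Im(z)\eT}$ beats the (at most polynomial in $\eT$) growth of this $L^2$-norm; this is what \cite[Proposition~1.3]{Sa15} supplies.
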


\begin{remark} \lb{r3.6} 
Formula \eqref{i27} is an analog of equality \eqref{A1+} and the $\cla$-equation for Dirac  systems will be formulated
in terms of $\Phi$ and its derivatives. In order to use \eqref{i27} or \eqref{Repr} and rewrite \eqref{e12} in terms of
$\Phi$ as an $\cla$-equation, we should integrate the integrals in \eqref{i27} or \eqref{Repr} by parts.
Therefore, $\Phi(x)$ should be two times differentiable. \hfill  $\diamond$
\end{remark}

\section{The $\cla$-Function for Dirac-Type Systems: Part II} \lb{s4}

 In this section we consider the case 
$v\in \big[C^1([0, \infty))\big]^{m_1 \times m_2}$. Our next proposition deals with the differentiability 
of $\Phi$ mentioned at the end of Remark \ref{r3.6}.

\begin{proposition}\lb{PnDif} Assume that  $v \in \big[C^1([0, \infty))\big]^{m_1 \times m_2}$ in the Dirac-type  system \eqref{A2}. Then $($cf.\ \eqref{e16}$)$
\begin{equation} 
\Phi(x) \in \big[C^2([0, \infty))\big]^{m_2 \times m_1}.
\end{equation} 
\end{proposition}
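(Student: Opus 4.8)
The goal is to upgrade the known first-order differentiability of $\Phi$ (which follows from Proposition~\ref{PnSimN}, since $E^{\pm 1}$ preserve the class of differentiable functions with square-integrable derivative) to a full $C^2$-statement on $[0,\infty)$. The plan is to obtain an explicit first-order differential relation for $\Phi$ whose right-hand side is itself $C^1$, and then invoke a bootstrap. Concretely, I would start from the defining relation $\Phi = E^{-1}\g_1$ in \eqref{e16} together with the normalization $E^{-1}\g_2 \equiv I_{m_2}$ in \eqref{i3}. Differentiating the system \eqref{A2} gives, via \eqref{e13}--\eqref{e14}, the linear ODEs $\b'(x) = i\,\b(x)(zJ + JV(x))\big|_{z=0} = i\,\b(x) JV(x)$ and the companion relation $\g'(x) = -i\,v(x)^*\b(x)$ from \eqref{e14+}. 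Hence $\g_1' = -i\,v^*\b_1$ and $\g_2' = -i\,v^*\b_2$, and under the hypothesis $v \in [C^1]^{m_1\times m_2}$ all of $\b_1,\b_2,\g_1,\g_2$ are in fact $C^2$, since their defining linear systems have $C^1$ coefficients.

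The heart of the argument is to transfer this regularity through $E^{-1}$. Because $E = I + (\text{Volterra kernel }N)$ is a lower-triangular transformation operator with Hilbert--Schmidt kernel, I expect a commutation relation of the form
\begin{align*}
\frac{d}{dx}\big(E^{-1}f\big)(x) = \big(E^{-1}f'\big)(x) + \big(E^{-1}\text{-type correction}\big)(x),
\end{align*}
obtained by differentiating the Volterra equation $f(x) = (E^{-1}f)(x) + \int_0^x N(x,t)(E^{-1}f)(t)\,dt$ that $E^{-1}f$ satisfies. The key structural fact I would exploit is the normalization \eqref{i3}: since $E^{-1}\g_2 \equiv I_{m_2}$ is \emph{constant}, differentiating this identity pins down how $\partial_x N(x,x)$ interacts with $\g_2'$, and this should let me express $\Phi'=(E^{-1}\g_1)'$ in closed form. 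In fact, I would anticipate recovering precisely the relation \eqref{e16!}, namely $E\Phi' = -i\,v^*$ (equivalently $\Phi' = E^{-1}(-i\,v^*)$), now justified for $C^1$ potentials rather than merely locally bounded ones. Once $\Phi' = E^{-1}(-i\,v^*)$ is established, the conclusion is immediate: $v^* \in [C^1]^{m_2\times m_1}$, so $-i\,v^* $ is a differentiable function with square-integrable derivative, and by the last sentence of Proposition~\ref{PnSimN} the operator $E^{-1}$ maps such functions into functions of the same class. Therefore $\Phi'$ is itself differentiable, i.e.\ $\Phi \in [C^2([0,\infty))]^{m_2\times m_1}$, with continuity of $\Phi''$ following from continuity of $v'$ and the mapping properties of $E^{-1}$.

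The main obstacle I foresee is the rigorous justification of differentiating under the Volterra operator $E^{-1}$ and, in particular, controlling the boundary/diagonal term $N(x,x)$ that arises when differentiating $\int_0^x N(x,t)(E^{-1}f)(t)\,dt$ in $x$. The kernel $N$ is only asserted to be Hilbert--Schmidt, not a priori differentiable, so I cannot naively differentiate inside the integral. The way around this is to avoid differentiating $N$ directly and instead work with the formula \eqref{e16!} as the fundamental identity: one establishes $E\Phi' = (iv)^*$ first (this is where the normalization \eqref{i3} and the smoothness of $\b,\g$ enter), and only \emph{then} solves for $\Phi'$ by applying $E^{-1}$ to the manifestly $C^1$ right-hand side. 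This reorders the logic so that the delicate regularity transfer is performed exactly once, by the clean mapping statement at the end of Proposition~\ref{PnSimN}, and no differentiation of $N$ is ever required. I would also need to verify that the local independence of $N$ from $\eT$ (Remark~\ref{Rktilde2}) makes the resulting $\Phi$ and $\Phi''$ well defined on the whole half-axis, but this is routine given that statement.
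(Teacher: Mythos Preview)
There is a genuine gap. Your plan is to establish $\Phi' = E^{-1}(-i\,v^*)$ via \eqref{e16!}, then apply the mapping property at the end of Proposition~\ref{PnSimN} to the $C^1$ input $-i\,v^*$, and read off $\Phi \in C^2$. But the mapping statement in Proposition~\ref{PnSimN} is weaker than you need: it only asserts that $E^{\pm 1}$ preserve the class of \emph{differentiable functions with square-integrable derivative}. Feeding in $-i\,v^*$ therefore yields $\Phi'$ differentiable with $\Phi'' \in L^2_{\loc}$, not $\Phi'' \in C^0$. Your final clause, that ``continuity of $\Phi''$ follows from continuity of $v'$ and the mapping properties of $E^{-1}$,'' is exactly the step that is unproven: nothing available to you guarantees that $E^{-1}$ maps $C^1$ into $C^1$. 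Since the kernel $N$ in \eqref{i2} is asserted only to be Hilbert--Schmidt, you cannot differentiate $\int_0^x N^{\times}(x,t)(-i v(t)^*)\,dt$ in $x$ and obtain a continuous result without further information on $\partial_x N^{\times}$.

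Establishing that $E^{-1}$ upgrades regularity from $C^k$ to $C^k$ is precisely the content of the paper's proof, and it is not a one-liner. The paper splits $E = \wt E E_0$ (Remark~\ref{Rktilde}) and shows separately that $\wt E^{-1}$ and $E_0^{-1}$ preserve $C^2$. For $\wt E^{-1}$ this rests on the commutation identity $\wt E^{-1}\cI = \cI \wt E^{-1}(I-K_1)^{-1}$ derived from $K = \wt E \cI \wt E^{-1} = \cI(I-K_1)$, which is then iterated to $\wt E^{-1}\cI^2$; combined with the Taylor representation $f = f(0) + i\cI f'(0) - \cI^2 f''$, this reduces matters to checking $(\wt E^{-1} I_{m_2})(x) \in C^2$, handled via the explicit kernel identity \eqref{e33}. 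The analogous step for $E_0^{-1}$ uses the convolution structure \eqref{i7} and $E_0^{-1}\cI = \cI E_0^{-1}$. Your proposed reordering via \eqref{e16!} does not bypass any of this: whether you apply $E^{-1}$ to $\g_1 \in C^2$ (the paper's route) or to $-i v^* \in C^1$ (your route), the substantive work is the same regularity-preservation lemma for $E^{-1}$, which you have not supplied.
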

\begin{proof} We divide the proof into four steps. \\[1mm]  
{\bf Step 1.}  We consider the operator $\wt E$ in $\big[L^2((0,\eT))\big]^{m_2}$ given by
\begin{equation} 
\big(\wt E f\big)(x)=  f(x) + \int_0^x \wt N(x,t) f(t) \,dt, \quad f \in \big[L^2((0,\eT))\big]^{m_2}, 
\lb{e17-} 
\end{equation}
(for a.e.~$x \in (0,\eT)$) discussed in Remark \ref{Rktilde}. In view of Proposition \ref{PnSimN} 
and Remark \ref{Rktilde} one has 
\begin{align}
& K=\wt E \cI \wt E^{-1} .    \lb{e17'}
\end{align}
Next, we show that $\wt E^{-1}$ maps vector-valued $C^2([0, \eT])^{m_2}$ functions 
into vector-valued functions of the same class.

Assuming $v \in \big[C^1([0, \infty))\big]^{m_1 \times m_2}$, the construction of $\wt N(\dott,\dott)$ in 
\cite[Lemma~2.3]{Sa15} then implies that $\wt N(x,t)$ in \eqref{e17-} is continuous on the domain 
$0~\leq~t~\leq~x~ < ~\infty$, and that
\begin{align} \lb{e17}&
\wt N(x,0) = 0_{m_2}.
\end{align} 
Since $\wt N(x,t)$ is continuous, the integral kernel $\wt N^{\times}(x,t)$ of the operator
\begin{align} \lb{e18}&
\big( \wt E^{-1} f\big)(x) = f(x) + \int_0^x \wt N^{\times}(x,t) f(t) \,dt, \quad f \in \big[L^2((0,\eT))\big]^{m_2}  
\end{align} 
(for $x \in (0,\eT)$), is continuous as well.  Moreover, using \eqref{e17-} and \eqref{e18} in order to 
detail the integral kernel of $\wt E^{-1} \wt E$, one obtains
\begin{align} \lb{e19}&
\wt N^{\times}(x,t)+\wt N(x,t)+\int_t^x \wt N^{\times}(x,r) \wt N(r,t) \, dr=0_{m_2}.
\end{align} 
Clearly, \eqref{e17} and \eqref{e19} yield
\begin{align} \lb{e20}&
\wt N^{\times}(x,0) = 0_{m_2}.
\end{align} 

We shall need \eqref{e19} and \eqref{e20} in future considerations, but first we consider
$K$ in greater detail. Clearly, $v \in \big[C^1([0, \infty))\big]^{m_1 \times m_2}$ implies that 
$\g \in \big[C^2([0, \infty))\big]^{m_2 \times m}$. It is immediate from \eqref{e15} that (for $x \in (0,\eT)$)
\begin{align}
\begin{split} 
& \big(Kf\big)^{\prime}(x)=i \g(x) J \g(x)^* f(x)+i \g^{\prime}(x)\int_0^x J \g(t)^*f(t) \, dt,   \lb{e21} \\
& \big(Kf\big)(0)=0_{m_2 \times 1}, \quad f \in \big[L^2((0,\eT))\big]^{m_2}.
\end{split} 
\end{align} 
Taking into account \eqref{e15-} and \eqref{e21}, one rewrites $K$ in the form (for $x \in (0,\eT)$)
\begin{align} 
\begin{split} 
& (K f)(x) = i \cI (Kf)^{\prime}(x) = (\cI (I_{[L^2((0,\eT))]^{m_2}} - K_1)f)(x),   \lb{e22} \\ 
& (K_1 f)(x):= \g^{\prime}(x)\int_0^x J \g(t)^* f(t) \, dt, \quad f \in \big[L^2((0,\eT))\big]^{m_2}.
\end{split} 
\end{align} 
In particular, $K = \cI (I_{[L^2((0,\eT))]^{m_2}} - K_1)$. 
Rewriting \eqref{e17'} and using \eqref{e22}, one arrives at 
\begin{align} \lb{e23}&
\wt E^{-1}K= \cI \wt E^{-1}, \quad \wt E^{-1} \cI = \cI \wt E^{-1}(I_{[L^2((0,\eT))]^{m_2}} - K_1)^{-1}.
\end{align} 
{\bf Step 2.} We denote the integral kernels of the operators $(I_{[L^2((0,\eT))]^{m_2}} - K_1)$ and 
$(I_{[L^2((0,\eT))]^{m_2}} - K_1)^{-1}$ by $-N_1$ and by $-N_1^{\times}$, respectively. That is, we set (for $x \in (0,\eT)$) 
\begin{align} 
& (K_1 f)(x)= \int_0^xN_1(x,t) f(t) \, dt, \quad N_1(x,t)= \g^{\prime}(x) J \g(t)^*,   \lb{e24} \\  
& \big((I_{[L^2((0,\eT))]^{m_2}} - K_1)^{-1} f\big)(x) = f(x) - \int_0^xN_1^{\times}(x,t) f(t) \, dt, \quad 
f \in \big[L^2((0,\eT))\big]^{m_2}.   \lb{e25}
\end{align} 
One recalls that $\g \in \big[C^2([0, \infty))\big]^{m_2 \times m}$, and so $N_1(\dott,\dott)$ (given in \eqref{e24})
is differentiable with a continuous derivative. By virtue of \eqref{e15'} and \eqref{e24}
one obtains $N_1(x,x)=0_{m_2}$.

Since $N_1(\dott,\dott)$ is continuous, $N_1^{\times}(\dott, \dott)$ is continuous as well. 
In order to show that   $N_1^{\times}(\dott,\dott)$ is differentiable, we consider  
the integral kernel of $(I_{[L^2((0,\eT))]^{m_2}} - K_1)(I_{[L^2((0,\eT))]^{m_2}} - K_1)^{-1}$, and (similar to \eqref{e19}) derive
\begin{align} \lb{e26}&
N_1^{\times}(x,t)+ N_1(x,t)=\int_t^x   N_1(x,r)  N_1^{\times}(r,t) \, dr.
\end{align} 
Taking into account  that the derivatives of $N_1$ are continuous and using 
\eqref{e26}, one concludes that $\frac{\p }{\p x} N_1^{\times}(x,t)$ is continuous. 
In view of the relations \eqref{e26} and $N_1(x,x)=0_{m_2}$, one gets $N_1^{\times}(x,x)=0_{m_2}$.
Hence, we next introduce the operator $K_2$ in $\big[L^2((0,\eT))\big]^{m_2}$ by the equalities (for $x \in (0,\eT)$) 
\begin{align}
\begin{split}
\big(K_2f\big)(x)=\frac{d}{d x}\int_0^xN_1^{\times}(x,t) f(t) \, dt
= \int_0^x \left(\frac{\p }{\p x}N_1^{\times}\right)(x,t) f(t) \, dt,&    \lb{e27} \\
f \in \big[L^2((0,\eT))\big]^{m_2}.&
\end{split} 
\end{align} 
Relations 
\eqref{e25} and \eqref{e27} then yield the following representation of $(I_{[L^2((0,\eT))]^{m_2}} - K_1)^{-1}$:
\begin{align} 
& (I_{[L^2((0,\eT))]^{m_2}} - K_1)^{-1} = I_{[L^2((0,\eT))]^{m_2}} - i \cI  K_2,    \no \\ 
& (K_2 f)(x) = \int_0^xN_2(x,t) f(t) \, dt,  \quad f \in \big[L^2((0,\eT))\big]^{m_2},     \lb{e28} \\
& N_2(x,t) = \bigg(\frac{\p }{\p x}N_1^{\times}\bigg)(x,t), \quad (x,t) \in [0,\eT] \times [0,\eT],   \no 
\end{align} 
where $N_2(\dott, \dott)$ is continuous on $[0,\eT] \times [0,\eT]$. From \eqref{e23} and \eqref{e28} one infers that 
\begin{equation} 
\wt E^{-1} \cI = \cI \wt E^{-1}-i \cI \wt E^{-1} \cI K_2,   \lb{2.42}
\end{equation} 
and the substitution of \eqref{e23} into the right-hand side of this equality results in 
\begin{align} \lb{e29}&
 \wt E^{-1} \cI = \cI\wt E^{-1}-i \cI^2\wt E^{-1}(I_{[L^2((0,\eT))]^{m_2}} - K_1)^{-1}K_2.
\end{align} 
Right-multiplying both sides of \eqref{e29} by $\cI$ one gets 
\begin{align} \lb{e29'}&
\wt E^{-1} \cI^2 = \cI \wt E^{-1} \cI - i \cI^2\wt E^{-1}(I_{[L^2((0,\eT))]^{m_2}} - K_1)^{-1}K_2 \cI.
\end{align} 
Substituting   \eqref{e29} into the right-hand side of \eqref{e29'} one derives
\begin{align} \nn 
 \wt E^{-1} \cI^2 & = \cI(\cI \wt E^{-1}-i \cI^2 \wt E^{-1}(I_{[L^2((0,\eT))]^{m_2}} - K_1)^{-1}K_2)  \no \\
& \quad  - i \cI^2\wt E^{-1}(I_{[L^2((0,\eT))]^{m_2}} - K_1)^{-1}K_2 \cI     \no \\
&= \cI^2 \big(\wt E^{-1}-i \cI \wt E^{-1}(I_{[L^2((0,\eT))]^{m_2}} - K_1)^{-1}K_2    \no \\ 
& \quad - i \wt E^{-1}(I_{[L^2((0,\eT))]^{m_2}} 
- K_1)^{-1}K_2 \cI\big).   \lb{e30}
\end{align} 
Finally, we note that any $f\in \big[C^2([0, \eT])\big]^{m_2}$ admits the representation
\begin{align} \lb{e31}&
f(x)=f(0)+i \cI f^{\prime}(0)- \big(\cI^2f^{\prime \prime}\big)(x).
\end{align} 
Recalling that $\cI$ is the integration operator (multiplied by $- i$) and taking into account
\eqref{e29}--\eqref{e31}  one concludes that $\wt E^{-1}f\in \big[C^2([0, \eT])\big]^{m_2}$ for 
any $f\in \big[C^2([0, \eT])\big]^{m_2}$ if only  $\big(\wt E^{-1}f(0)\big)(x)$ belongs $\big[C^2([0, \eT])\big]^{m_2}$
for any constant vector function $f(0)$. Recalling Remark \ref{RkIm} we rewrite this condition as
\begin{align} \lb{e32}&
\big(\wt E^{-1}I_{m_2}\big)(x) \in \big[C^2([0, \eT])\big]^{m_2 \times m_2}.  
\end{align} 
{\bf Step 3.} We rewrite the equality $\wt E^{-1}K = \cI \wt E^{-1}$ in \eqref{e23} in 
terms of the integral kernels of the corresponding operators:
\begin{align} \lb{e33}&
I_{m_2}+\g(x) J \g(t)^*+ \int_t^x \wt N^{\times}(r,t) \, dr 
+ \int_t^x \wt N^{\times}(x,r)\g(r) \, dr \, J \g(t)^*=0_{m_2}.
\end{align} 
Setting $t=0$ in \eqref{e33} and recalling \eqref{e20}, one obtains
\begin{align} \lb{e34}&
\wt E^{-1}\big(- \g(x) J \g(0)^*\big)= I_{m_2}.
\end{align} 
Formulas  \eqref{e15-}, \eqref{e15'} and \eqref{e31} imply that
\begin{align} \lb{e35}&
 -\g(x) J \g(0)^*= I_{m_2} + \cI^2 \g^{\prime \prime}(x) J \g(0)^*.
\end{align} 
It is immediate from \eqref{e30} that
\begin{align} \lb{e36}&
\wt E^{-1} \cI^2 \g^{\prime \prime}(x) J \g(0)^* \in \big[C^2([0, \eT])\big]^{m_2 \times m_2},
\end{align} 
and \eqref{e32} follows from \eqref{e34}--\eqref{e36}.
This completes the proof that $\wt E^{-1}f\in \big[C^2([0, \eT])\big]^{m_2}$ for any 
$f\in \big[C^2([0, \eT])\big]^{m_2}$.

{\bf Step 4.} One recalls that relations \eqref{e16} (varying $\eT \in (0,\infty)$) uniquely determine
$\Phi$ on $[0, \infty)$ and that according to Remark \ref{Rktilde} one has 
$E=\wt E E_0$
for $E$ in \eqref{e16}. Thus, in order to prove Proposition \ref{PnDif} it remains to show that
\begin{align} \lb{e37}&
E_0^{-1}f \in \big[C^2([0, \eT])\big]^{m_2} \, \text{ for any } \, f \in \big[C^2([0, \eT ])\big]^{m_2},
\end{align} 
where $E_0$ has the form \eqref{i7}.
It is easy to see (and is used in the proof
of \cite[Proposition 3.1]{Sa15}) that
\begin{align} \lb{e38}&
E_0^{-1} \cI = \cI E_0^{-1}.
\end{align} 
In view of \eqref{e31} and \eqref{e38}, we (similarly to the case of $\wt E^{-1}$) see that
$\big(E_0^{-1}f\big)(x)\in \big[C^2([0, \eT])\big]^{m_2}$ if $E_0^{-1}$ applied to the constant vector $f(0)$ belongs $\big[C^2([0, \eT])\big]^{m_2}$.
In other words, it remains to show that $E_0^{-1}I_{m_2}$ belongs $\big[C^2([0, \eT])\big]^{m_2\times m_2}$.

In order to prove $\big(E_0^{-1}I_{m_2}\big)(x) \in \big[C^2([0, \eT])\big]^{m_2 \times m_2}$, one notes that
according to \eqref{i7} one has 
\begin{align} 
\begin{split} 
\big(E_0 I_{m_2}\big)(x)& = I_{m_2}+\int_0^x N_0(t) \, dt=I_{m_2}+i\cI \big(\big(\wt E^{-1}\g_2\big)^{\prime}(x)\big)
\\  &
=I_{m_2}+i \cI \Big(\big(\wt E^{-1}\g_2\big)^{\prime}(0) 
+ i \cI \big(\wt E^{-1}\g_2\big)^{\prime\prime}(x)\Big),
\end{split} 
\end{align} 
which (using \eqref{e38}) may be rewritten in the form
\begin{align} \lb{e39}&
\big(E_0^{-1}I_{m_2}\big)(x) = I_{m_2}-i \cI E_0^{-1}\Big(\big(\wt E^{-1}\g_2\big)^{\prime}(0)+i \cI \big(\wt E^{-1}\g_2\big)^{\prime\prime}(x)\Big).
\end{align} 
Clearly, the right-hand side of \eqref{e39} belongs to $\big[C^1([0,\eT])\big]^{m_2 \times m_2}$, and so \\
$\big(E_0^{-1}I_{m_2}\big)(x)$ belongs to $\big[C^1([0,\eT])\big]^{m_2 \times m_2}$. Hence, 
$E_0^{-1}\big(\wt E^{-1}\g_2\big)^{\prime}(0)$ belongs to $\big[C^1([0,\eT])\big]^{m_2 \times m_2}$, 
which implies that the right-hand side of \eqref{e39} belongs to $\big[C^2([0,\eT])\big]^{m_2 \times m_2}$,
and the required relation $\big(E_0^{-1}I_{m_2}\big)(x) \in \big[C^2([0, \eT])\big]^{m_2 \times m_2}$ follows.
\end{proof}

\begin{remark}\lb{RkSdvig}
Together with the system \eqref{A2} one can consider the shifted systems
\begin{align} &       \lb{e40}
\wh y_{\ell}^{\prime}(x, z )=i \big(z J + J \wh V_{\ell}(x)\big)\wh y_{\ell}(x,z), \quad \wh V_{\ell}(x):=V(x+\ell) 
\quad (x \geq 0), 
\end{align} 
where $\ell \geq 0$ and the fundamental solution is denoted by $\wh u_{\ell}(x,z)$. It is easy to see that
 \begin{align} &       \lb{e41}
  \wh u_{\ell}(x,z)=u_{\ell}(x+\ell,z).
\end{align} 
Equality  \eqref{e41} and Definition \ref{DnWF} imply that $\vp_{\ell}(z)$ is the Weyl--Titchmarsh function of
the system \eqref{e40}. \hfill  $\diamond$
\end{remark}

\begin{notation}  Henceforth, we introduce the additional parameter $\ell$ in our notation of the corresponding
systems \eqref{e40} $($instead of the system \eqref{A2}$)$ and write, for instance,  $\b(x,\ell)$, $\g(x,\ell)$,
$\Phi(x,\ell)$, etc. In addition, we set 
 \begin{align} &       \lb{e42}
\cla(x, \ell):=\frac{\p}{\p x}\Phi(x,\ell) \equiv \Phi^{\prime}(x,\ell).
\end{align} 
\end{notation}

Clearly, \eqref{e16!} holds pointwise in the case $v\in \big[C^1([0, \infty))\big]^{m_1 \times m_2}$. Formulas  \eqref{i2} and \eqref{e16!}, together with the   equalities 
$\wh V_{\ell}(x):=V(x+\ell)$ and \eqref{e42}  above, imply that
 \begin{align} &       \lb{e43}
v(0)= -i   \Phi^{\prime}(0)^*, \quad     v( \ell)=-i \Phi^{\prime}(0,\ell)^*=-i \cla (0,\ell)^*.
\end{align} 

\section{The $\cla$-Equation for Dirac-Type Systems} \lb{s5}

Using \eqref{i27} and \eqref{e43} we now rewrite \eqref{e12} in the form of the $\cla$-equation for
Dirac-type  systems and obtain the following result.

\begin{theorem}\lb{AeqfD} 
Suppose that \, $v \in \big[C^1([0, \infty))\big]^{m_1 \times m_2}$ in the Dirac-type system \eqref{A2}. Then the 
$\cla(\dott,\dott)$-function given by
\eqref{e42} is jointly continuously differentiable in both variables, 
$\cla(\dott,\dott) \in \big[C^1([0,\infty) \times [0,\infty))\big]^{m_2 \times m_1}$, and satisfies the integro-differential equation $($for $(x,\ell) \in [0,\infty) \times [0,\infty)$$)$
 \begin{align} &       \lb{e44}
\frac{\p}{\p \ell}\cla(x, \ell) = \frac{\p}{\p x}\cla(x,\ell)
+ \int_0^x \cla(x-t,\ell)\cla(0,\ell)^*\cla(t,\ell) \, dt.
\end{align} 
\end{theorem}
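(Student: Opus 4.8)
The plan is to feed the Laplace-type representation \eqref{Repr0} of the Weyl--Titchmarsh function directly into the Riccati equation \eqref{e12} and then read off \eqref{e44} by matching transforms. The first move is to convert \eqref{Repr0} into a representation in terms of $\cla=\Phi'$ rather than $\Phi$. Integrating by parts in $\int_0^\infty e^{2ixz}\Phi(x,\ell)\,dx$ and using that $\Phi(0,\ell)=0$ (which holds because $\g_1(0,\ell)=0_{m_2\times m_1}$ by the normalization $\wh u_\ell(0,z)=I_m$, and $E^{-1}$ is of Volterra type, so $\Phi(0,\ell)=(E^{-1}\g_1)(0,\ell)=\g_1(0,\ell)$) yields
\[
\vp_\ell(z) = -\int_0^\infty e^{2ixz}\cla(x,\ell)\,dx, \qquad \Im(z)>0.
\]
The boundary term at infinity drops because $\Im(z)>0$ forces exponential decay of $e^{2ixz}$ against the (at most subexponentially growing) $\Phi(\dott,\ell)$ and $\cla(\dott,\ell)$; on a finite window this is exactly the content of \eqref{i27} with its controlled error term. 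A second integration by parts then expresses the factor $2z\vp_\ell(z)$ appearing in \eqref{e12} as $2z\vp_\ell(z) = -i\cla(0,\ell) - i\int_0^\infty e^{2ixz}\partial_x\cla(x,\ell)\,dx$.

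Next I would treat the three terms on the right-hand side of \eqref{e12} separately. From \eqref{e43} one has $v(\ell)^* = i\cla(0,\ell)$, so the contribution $-i\,v(\ell)^*$ produces exactly $+\cla(0,\ell)$, which cancels against the $-\cla(0,\ell)$ coming from $-2iz\vp_\ell(z)$; this is the pleasant cancellation that makes the equation $z$-independent, and after it the entire linear part collapses to $-\int_0^\infty e^{2ixz}\partial_x\cla(x,\ell)\,dx$. For the quadratic term I would substitute the transform form of $\vp_\ell$ in both factors together with $v(\ell)=-i\cla(0,\ell)^*$; the product of the two transforms becomes the transform of a convolution, and the change of variables $x=s+t$ (so that $s$ runs over $[0,x]$) turns $-i\,\vp_\ell v(\ell)\vp_\ell$ into $-\int_0^\infty e^{2ixz}\!\big(\int_0^x\cla(x-t,\ell)\cla(0,\ell)^*\cla(t,\ell)\,dt\big)dx$, which is precisely the convolution kernel in \eqref{e44}.

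Finally, writing the left-hand side of \eqref{e12} as $\tfrac{d}{d\ell}\vp_\ell(z) = -\int_0^\infty e^{2ixz}\partial_\ell\cla(x,\ell)\,dx$ (differentiation under the integral), equating it to the assembled right-hand side, and invoking uniqueness of the Laplace transform — both integrands being continuous and of subexponential growth, with equality for all $z$ in a half-plane — strips the common transform and delivers \eqref{e44}. For the regularity claim $\cla\in[C^1([0,\infty)\times[0,\infty))]^{m_2\times m_1}$, Proposition \ref{PnDif} already gives $\Phi(\dott,\ell)\in C^2$, hence $C^1$ joint regularity in the $x$-direction; the $\ell$-direction follows from the $C^1$-dependence of the shifted fundamental solution $\wh u_\ell$, and hence of $\g(\dott,\ell)$ and of the similarity operator $E=E_\ell$, on the shift parameter $\ell$ (Remark \ref{RkSdvig}), since $v\in C^1$. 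I expect the main obstacle to lie precisely here: rigorously justifying that one may differentiate under the integral in $\ell$ and apply transform uniqueness requires this joint $C^1$ regularity, whose $\ell$-dependence is the genuinely technical ingredient; the decay justification for the two integrations by parts is the other point needing care, though it is handled by the $\Im(z)>0$ exponential factor and the estimate \eqref{i27}.
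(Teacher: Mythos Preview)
Your formal calculation is the same as the paper's Step~1: integrate \eqref{Repr0} by parts using $\Phi(0,\ell)=0$, substitute into the Riccati equation \eqref{e12}, use \eqref{e43} to get the cancellation of $\cla(0,\ell)$, and turn the product $\vp_\ell v \vp_\ell$ into a convolution. That part is fine and matches the paper line for line (cf.\ \eqref{e45}--\eqref{e50}).

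The genuine gap is exactly where you flag it, but you underestimate it. You need
\[
\frac{d}{d\ell}\vp_\ell(z)=-\int_0^\infty e^{2ixz}\,\partial_\ell\cla(x,\ell)\,dx
\]
(or its truncated version with error term), and you propose to obtain this by showing that the similarity operator $E=E_\ell$ depends $C^1$ on $\ell$. That route is not realistic: $E$ is built as $\wt E E_0$ where $\wt E$ comes from a resolvent construction (Remark~\ref{Rktilde}, based on \cite{LAS}) that is not given explicitly enough to differentiate in a parameter, and even if you had $E_\ell\in C^1$ in $\ell$ you would still need growth control of $\partial_\ell\cla(\dott,\ell)$ on all of $[0,\infty)$ to justify differentiating under the integral and dropping the boundary terms. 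Your assertion that $\Phi$, $\cla$ are ``at most subexponentially growing'' is exactly what needs to be proved, and is not available from anything so far.

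The paper's Step~2 takes an entirely different path. It first reduces to compactly supported $v$ (and removes this at the end by a locality argument, since $\cla(x,\ell)$ for $x\le b_1$, $\ell\le b_2$ depends only on $v|_{[0,b_1+b_2]}$). Then, in Appendix~\ref{sA}, it writes the resolvent $(L-z)^{-1}$ as a Neumann series, obtains a series $\cla(\a,\ell)=\sum_k\cla_k(\a,\ell)$, and bounds each term by the corresponding term for the explicit scalar potential $v=ic$, whose Weyl function is computed in closed form (Lemma~\ref{lA.5}). Positive-definite function theory then controls $\sum_k(2k{+}1)c^{2k}f_k$, which dominates $\sum_k\|\partial_\ell\cla_k\|$ and yields both the representation \eqref{w28} for $\frac{d}{d\ell}\vp_\ell$ and the joint continuity of $\partial_\ell\cla$. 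Finally the identity of transforms on a half-plane is upgraded to a pointwise identity via an entire-function argument (Phragm\'en--Lindel\"of type: \eqref{wf8}--\eqref{wf10}), not by a bare Laplace-uniqueness statement. None of this machinery is replaceable by smooth dependence of $E_\ell$.
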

\begin{proof} We divide the proof into two steps. \\[1mm] 
{\bf Step 1.} Formula \eqref{e16} implies that
 \begin{align} &       \lb{e45}
\Phi(0)=\big(E^{-1}\g_1\big)(0)=\g_1(0)=0_{m_2 \times m_1}.
\end{align} 
In view of \eqref{i27}, \eqref{e42} and \eqref{e45} one infers that 
 \begin{align} 
 \begin{split} 
\vp_{\ell}(z)& \underset{\Im (z) \to \infty}{=} \int_0^{\eT}\Phi(x,\ell) \, d\big(e^{2 i x z}\big) 
+ O\Big(z e^{2 i \eT z}\big/ (\Im(z))^{1/2}\Big)   \\       
& \underset{\Im (z) \to \infty}{=} - \int_0^{\eT}e^{2i x z}\cla(x,\ell) \, dx 
+ O\Big(z e^{2 i \eT z}\big/ (\Im(z))^{1/2}\Big).   \lb{e46}
\end{split} 
\end{align} 
According to Proposition  \ref{PnDif}, $\cla(x, \ell)$ is  continuously differentiable with respect to  $x$.
Hence, taking into account \eqref{e43}, one rewrites \eqref{e40} in the form
 \begin{align} 
 \begin{split} 
2z\vp_{\ell}(z)& \underset{\Im (z) \to \infty}{=} - i \cla(0,\ell)- i \int_0^{\eT}e^{2i x z}\cla^{\prime}(x,\ell) \, dx 
+O\Big(z^2 e^{2 i \eT z}\big/ (\Im(z))^{1/2}\Big)      \\       
& \underset{\Im (z) \to \infty}{=} - v(\ell)^*- i \int_0^{\eT}e^{2i x z}\cla^{\prime}(x,\ell) \, dx 
+ O\Big(z^2 e^{2 i \eT z}\big/ (\Im(z))^{1/2}\Big).   \lb{e47}
\end{split} 
\end{align}
Moreover, using \eqref{e46} and \eqref{e43}, the representation of $\vp_{\ell}(z)v(\ell)\vp_{\ell}(z)$ reads as follows 
\begin{align} 
\vp_{\ell}(z)v(\ell)\vp_{\ell}(z) & \underset{\Im (z) \to \infty}{=} - i \int_0^{\eT} 
\int_0^{\eT}e^{2i (\tau+t)z}\cla(\tau, \ell) \, d\tau\cla(0,\ell)^*\cla(t,\ell) \, dt    \no \\  
& \hspace*{1.4cm} + O\Big(z e^{2 i \eT z}\big/ (\Im(z))^{1/2}\Big)    \no \\
\begin{split} 
& \underset{\Im (z) \to \infty}{=} - i \int_0^{\eT} \int_t^{\eT+t}e^{2i x z}\cla(x-t, \ell) \, dx\cla(0,\ell)^*\cla(t,\ell) \, dt
\\  
& \hspace*{1.4cm} + O\Big(z e^{2 i \eT z}\big/ (\Im(z))^{1/2}\Big).    \lb{e48} 
\end{split} 
\end{align}
Changing the order of integration  in \eqref{e48} (and removing the swiftly decaying part in the result) one arrives at 
\begin{align} 
\begin{split}
\vp_{\ell}(z)v(\ell)\vp_{\ell}(z) & \underset{\Im (z) \to \infty}{=}
-i \int_0^{\eT} e^{2i x z}\int_0^x\cla(x-t, \ell)\cla(0,\ell)^*\cla(t,\ell) \, dt \, dx    \\
& \hspace*{1.4cm} + O\Big(z e^{2 i \eT z}\big/ (\Im(z))^{1/2}\Big).      \lb{e49}
\end{split} 
\end{align}
Taking into account \eqref{e47} and \eqref{e49} one rewrites \eqref{e12} in the form
\begin{align} 
\begin{split} 
\frac{d}{d \ell}\vp_{\ell}(z)& \underset{\Im (z) \to \infty}{=} - \int_0^{\eT} e^{2i x z} 
\int_0^x\cla(x-t, \ell)\cla(0,\ell)^*\cla(t,\ell) \, dt \, dx     \\
& \hspace*{1.4cm}
-\int_0^{\eT}e^{2i x z}\cla^{\prime}(x,\ell) \, dx 
+ O\Big(z^2 e^{2 i \eT z}\big/ (\Im(z))^{1/2}\Big).   \lb{e50}
\end{split} 
\end{align}

{\bf Step 2.}  A considerable part of the proof connected with the representation of 
$\frac{d}{d \ell}\vp_{\ell}(z)$ is moved to Appendix \ref{sA}. Assuming temporarily that $\supp \, (v) \subseteq [0,a]$, one derives with the help of \eqref{e50} and \eqref{w28},  
\begin{align}  
& \int_0^{\eT} e^{2i x z} \cF(x,\ell) \, dx \underset{\Im (z) \to \infty}{=} 
O\Big(z^2e^{2 i \eT z} \big/(\Im(z))^{1/2}\Big),    \lb{wf6}  \\   
& \cF(x, \ell):=\frac{\p}{\p \ell}\cla(x, \ell) - \frac{\p}{\p x}\cla(x,\ell)
- \int_0^x \cla(x-t,\ell)\cla(0,\ell)^*\cla(t,\ell) \, dt.    \lb{wf7} 
\end{align}
It is immediate from \eqref{wf6} that 
\begin{equation}  \lb{wf8}
\Up(z,\ell):= \int_0^{\eT} e^{2i (x-T) z} \cF(x,\ell) \, dx  \underset{|z| \to \infty}{=} O\big(z^2\big).
\end{equation}
According to \eqref{wf8} and to \cite[Vol.~II, Ch.~9, Section~42, Lemma~2]{Ma85}, the entire function 
$\Up(\dott,\ell)$ is, in fact, a polynomial, 
\begin{align} & \lb{wf9}
\Up(z,\ell)= a_2(\ell)z^2+a_1(\ell)z+a_0(\ell).
\end{align}
A comparison of the definition of $\Up(z,\ell)$ in \eqref{wf8} with expression \eqref{wf9} reveals that
\begin{align} & \lb{wf10}
\Up(z,\ell)\equiv 0_{m_2 \times m_1}.
\end{align}
Finally, relations \eqref{wf7}, \eqref{wf8}, and \eqref{wf10} imply  \eqref{e44}.

We recall that the right-hand side of \eqref{e44} is continuous with respect to $x$, and so $\frac{\p}{\p \ell}\cla(x, \ell)$
is continuous with respect to $x$ as well. 
Hence, since the matrix functions $\cla_k(x,\ell)$ are continuous in $(x,\ell)$, the functions $f_k(\a)$ given by \eqref{w4} are continuous, the equality
\begin{equation} 
\frac{\p}{\p \ell}\cla(x,\ell)=\sum_{k=0}^{\infty}\frac{\p}{\p \ell}\cla_k(x,\ell) 
\end{equation} 
is valid, 
equalities \eqref{w21} and \eqref{wf12} hold, and $\wh \om(\a,\eta)$ in \eqref{wf12} is continuous with respect to $\a$ (see Remark \ref{LR}),
one concludes that the matrix function  $\frac{\p}{\p \ell}\cla(x, \ell)$ is continuous with respect
to the pair  $x$ and $\ell$. Now, \eqref{e44} yields that $\frac{\p}{\p x}\cla(x,\ell)$ is also continuous with respect
to $(x,\ell)$.

It remains to remove the additional condition $\supp \, (v) \subseteq [0,a]$. Indeed, according to \eqref{e16}, \eqref{e42}, and Remark \ref{Rktilde2}, the function $\cla(x,\ell)$, where $x\leq b_1$ and $\ell \leq b_2$, is uniquely determined by $v(x)$ for $x \in [0,b_1+b_2]$.
Thus, one can indeed abandon the requirement $\supp \, (v) \subseteq [0,a]$ for \eqref{e44} to hold.
\end{proof}

\section{The Inverse Approach} \lb{s6}

Similarly to the procedure for scalar  Schr\"odinger operators in \cite{GS00, Re03, Si99}, one can solve the  
inverse problem for the Dirac-type system by solving the $\cla$-equation \eqref{e44} with the boundary condition $\cla(x, 0)=\cla(x)$
(which  is easily recovered from the Weyl--Titchmarsh function by taking the inverse Fourier transform in \eqref{Repr})
and by using the equality $v(\ell)=-i \cla(0,\ell)^*$ in \eqref{e43}.
In order to demonstrate that this approach works, we next prove the existence and uniqueness results for \eqref{e44}. 

First, we consider the necessary conditions on $\cla(x, 0)$, required in our inverse problem. It is immediate from \eqref{e15} that for $x \in (0,\eT)$
one has
\begin{equation}
(K f)(x) - (K^* f)(x) = i \g(x) J \int_0^{\eT}\g(t)^* f(t) \, dt, \quad f \in \big[L^2((0,\eT))\big]^{m_2}. 
\end{equation}
Multiplying this equality by $E^{-1}$ from the left and by $(E^{-1})^*$ from the right and taking into account \eqref{i1}, \eqref{i3}
and \eqref{e16}, one obtains the operator identity,
\begin{align} 
& \cI S-S \cI^*=i \Pi J \Pi^*; \quad S:=E^{-1}(E^{-1})^*,    \lb{e51} \\
\begin{split} 
& \Pi\in \cB\Big(\BC^m, \, \big[L^2((0, \eT))\big]^{m_2}\Big),    \\
& (\Pi g)(x)=\big(E^{-1}\g\big)(x)g=\begin{bmatrix}\Phi(x) & I_{m_2}\end{bmatrix}g, 
\quad g \in \bbC^m.   \lb{e52} 
\end{split} 
\end{align}
According to \cite[Proposition~2.41]{SSR13}, there is a unique bounded operator $S$,
which satisfies the identity $AS-SA^*=i \Pi J \Pi^*$ in \eqref{e51}. This $S$ is strictly positive definite, $S > 0$  
(i.e., there exists $\varepsilon > 0$ such that $S \geq \varepsilon I_{[L^2((0, \eT))]^{m_2}}$)
and its integral kernel is expressed via $\cla(\dott)$. More precisely, one has 
\begin{align}  \lb{e53}
\begin{split} 
& (S f)(x) = (S_{\eT} f)(x) = f(x) - \int_0^{\eT}s(x,t) f(t) \, dt > 0, \quad f \in \big[L^2((0,\eT))\big]^{m_2},  \\ 
& s(x,t):=\int_0^{\min(x,t)}\cla(x-\xi)\cla(t-\xi)^* \, d\xi \quad ((x,t) \in (0,\eT) \times (0,\eT)), 
\end{split} 
\end{align}
with
\begin{equation}
S := S_{\eT} > 0 \quad (\text{for all } \eT > 0).    \lb{e53S}
\end{equation}
Hence, the following corollary of Proposition \ref{PnDif} holds:

\begin{corollary} \lb{CyNesCond} 
Suppose that $v \in \big[C^1([0, \infty))\big]^{m_1 \times m_2}$ in the Dirac-type system \eqref{A2}. Then, 
$\cla = \Phi^{\prime} \in \big[C^1([0,\infty))\big]^{m_2 \times m_1}$ and the positive definiteness condition 
\eqref{e53S} holds for all $\eT \in [0,\infty)$, that is, $S = S_{\eT} > 0$ for all $\eT > 0$.
\end{corollary}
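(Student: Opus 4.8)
The plan is to observe that both conclusions are immediate consequences of Proposition \ref{PnDif} together with the operator-theoretic machinery already assembled above the statement, so the ``proof'' is really a matter of assembling these ingredients. First I would settle the regularity claim. By Proposition \ref{PnDif}, the hypothesis $v \in \big[C^1([0, \infty))\big]^{m_1 \times m_2}$ forces $\Phi \in \big[C^2([0, \infty))\big]^{m_2 \times m_1}$; since $\cla = \Phi^{\prime}$ by \eqref{042} (equivalently \eqref{e42}), differentiating once lowers the order of regularity by one and yields $\cla = \Phi^{\prime} \in \big[C^1([0, \infty))\big]^{m_2 \times m_1}$ directly.

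For the positive definiteness I would exploit the factored form of $S$. The operator identity \eqref{e51}, namely $\cI S - S \cI^* = i \Pi J \Pi^*$ with $S := E^{-1}(E^{-1})^*$ and $\Pi$ as in \eqref{e52}, has already been obtained by multiplying the relation $(Kf)(x) - (K^* f)(x) = i \g(x) J \int_0^{\eT} \g(t)^* f(t)\,dt$ by $E^{-1}$ from the left and by $(E^{-1})^*$ from the right, and invoking \eqref{i1}, \eqref{i3}, and \eqref{e16}. The crucial structural point is that $S = E^{-1}(E^{-1})^* = (E^* E)^{-1}$: since $E$ is the sum of the identity and a Volterra integral operator (see \eqref{i2}), it is boundedly invertible on $\big[L^2((0,\eT))\big]^{m_2}$, so for every $f$ one has $(S f, f) = \big\|(E^{-1})^* f\big\|^2 \geq \|E\|^{-2}\|f\|^2$. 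Hence $S \geq \varepsilon\, I_{[L^2((0,\eT))]^{m_2}} > 0$ with $\varepsilon = \|E\|^{-2}$, which is precisely \eqref{e53S} for the fixed but arbitrary $\eT \in (0,\infty)$ (the case $\eT = 0$ being vacuous).

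It then remains to exhibit $S$ in the explicit form \eqref{e53}, whose kernel $s(x,t) = \int_0^{\min(x,t)} \cla(x-\xi)\cla(t-\xi)^*\,d\xi$ is expressed through $\cla$ alone. Here I would invoke the uniqueness assertion of \cite[Proposition~2.41]{SSR13}: the bounded solution of the Lyapunov-type equation \eqref{e51} is unique and is furnished with exactly this kernel in terms of the data carried by $\Pi$ (that is, by $\Phi$, and hence by $\cla = \Phi^{\prime}$). Matching our $S = E^{-1}(E^{-1})^*$ against that unique solution therefore delivers \eqref{e53} together with its strict positivity. The step requiring the most care is this kernel identification, and in particular the verification that the representation is consistent across all truncation lengths $\eT$ (so that a single $\cla$ on $[0,\infty)$ serves all $S_{\eT}$ simultaneously, cf.\ Remark \ref{Rktilde2}); the positivity itself is essentially automatic once $S$ is recognized as $(E^* E)^{-1}$. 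Everything else reduces to quoting Proposition \ref{PnDif} and \cite[Proposition~2.41]{SSR13}.
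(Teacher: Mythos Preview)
Your proposal is correct and follows essentially the same route as the paper: the regularity claim is deduced from Proposition~\ref{PnDif}, and the positive definiteness together with the kernel representation \eqref{e53} comes from the factorization $S=E^{-1}(E^{-1})^*$ combined with the uniqueness assertion of \cite[Proposition~2.41]{SSR13}. The only minor difference is emphasis: the paper simply records that \cite[Proposition~2.41]{SSR13} yields strict positivity and the explicit kernel, whereas you spell out the elementary inequality $(Sf,f)=\|(E^{-1})^*f\|^2\geq \|E\|^{-2}\|f\|^2$ directly from the factored form; this is a harmless elaboration rather than a different argument.
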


A fundamental fact is that the converse statement is also valid. To prove it we need to introduce the operators $S_x$ in $\big[L^2((0,x))\big]^{m_2}$ 
$(x \in (0,T))$ in the same way as we introduce  $S_{\eT}$:
\begin{equation}
(S_x f)(y):= f(y) - \int_0^x s(y,t) f(t) \, dt, \quad S_x \in \cB\Big(\big[L^2((0,x))\big]^{m_2}\Big), 
\end{equation}
where $s(x,t)$ is given by the last equality in \eqref{e53}.

\begin{proposition}\lb{PnSufCond} 
Let the $m_2 \times m_1$ matrix function $\cla \in \big[C^1([0, \infty))\big]^{m_2 \times m_1}$ satisfy the 
positive definiteness condition \eqref{e53S} for all $\eT \in (0,\infty)$. Then there exists 
$v \in \big[C^1([0, \infty))\big]^{m_1 \times m_2}$ in  
the Dirac-type system \eqref{A2} such that $\cla = \Phi^{\prime}$. In particular,
\begin{equation}
v(\ell) = - i \cla(0,\ell)^* \quad (\ell \geq 0).    \lb{6.v}
\end{equation}
\end{proposition}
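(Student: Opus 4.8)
\textbf{Proof proposal for Proposition \ref{PnSufCond}.}

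The plan is to reverse the construction that led from a potential $v$ to the operator $S=S_{\eT}$ and the function $\cla$, thereby reconstructing $v$ from a given $\cla$ satisfying the positivity hypothesis \eqref{e53S}. The starting point is the structural identity \eqref{e51}, namely $\cI S - S \cI^* = i \Pi J \Pi^*$ with $\Pi$ given by \eqref{e52}. In the forward direction this identity was \emph{derived} from a potential; here I would instead regard it as defining data. Concretely, given $\cla \in \big[C^1([0,\infty))\big]^{m_2 \times m_1}$ with $S_{\eT} > 0$ for all $\eT$, I would first \emph{define} the operator $S=S_{\eT}$ directly by the integral-kernel formula \eqref{e53}, so that $S > 0$ by hypothesis and is boundedly invertible on $\big[L^2((0,\eT))\big]^{m_2}$. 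The key algebraic point is that $S$ so defined automatically satisfies the operator identity $\cI S - S \cI^* = i \Pi J \Pi^*$ with $\Pi$ built from $\cla$ via $(\Pi g)(x) = \begin{bmatrix}\Phi(x) & I_{m_2}\end{bmatrix}g$, where $\Phi(x) := \int_0^x \cla(\xi)\,d\xi$ recovers the antiderivative consistent with \eqref{e42} and \eqref{e45}. This is a direct computation using $s(x,t)=\int_0^{\min(x,t)}\cla(x-\xi)\cla(t-\xi)^*\,d\xi$; the appearance of $\min(x,t)$ is exactly what produces the commutator structure against the Volterra integration operator $\cI$.

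Next I would invoke the uniqueness half of \cite[Proposition~2.41]{SSR13} in reverse: since $S>0$ solves the $S$-node identity with the given $\Pi$ and $J$, the triple $(\cI, S, \Pi)$ constitutes an admissible $S$-node, and the general theory of such nodes (the operator-identity / $S$-node method underlying \cite{SSR13, Sa15}) produces a factorization $S = E^{-1}(E^{-1})^*$ with $E$ a bounded lower-triangular operator of the form \eqref{i2}, unique up to the normalization enforced by \eqref{i3}. From $E$ one recovers the fundamental-solution data: the function $\g_2$ via $(E^{-1}\g_2)(x) \equiv I_{m_2}$ as in \eqref{i3}, and then $\g_1$ via $\Phi = E^{-1}\g_1$ as in \eqref{e16}, so that $\g = \begin{bmatrix}\g_1 & \g_2\end{bmatrix}$ is recovered. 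The potential is then \emph{defined} by the recovery formula: by \eqref{e16!} one sets $v(x) := \big(iE\Phi^{\prime}\big)(x)^*$, equivalently $v(\ell) = -i\cla(0,\ell)^*$ as asserted in \eqref{6.v}, the two being reconciled through the $\ell$-shift structure of Remark \ref{RkSdvig} together with the boundary identity \eqref{e43}.

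The remaining and genuinely substantive task is the \emph{regularity} claim $v \in \big[C^1([0,\infty))\big]^{m_1 \times m_2}$. Here I would argue as follows. Because $\cla \in C^1$, the kernel $s(x,t)$ and hence the node data inherit one degree of smoothness, which must be propagated through the factorization $S=E^{-1}(E^{-1})^*$ to the kernel $N(x,t)$ of $E$ and thence to $\Phi$ and $v$. This is precisely the direction controlled by the machinery of Proposition \ref{PnDif}: the operators $E^{\pm 1}$ preserve the relevant differentiability classes, and the triangular-convolution normalization $E_0$ of Remark \ref{Rktilde} interacts with $\cI$ via $E_0^{-1}\cI = \cI E_0^{-1}$ (formula \eqref{e38}), allowing the $C^2$-estimate on $\Phi$ to be bootstrapped. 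I expect the main obstacle to be exactly this regularity bootstrap: one must show that the reconstructed $\g$ is $C^2$ (so that $\Phi=E^{-1}\g_1 \in C^2$ and $v = -i\Phi^{\prime}(0,\cdot)^* \in C^1$), running the smoothing arguments of the Steps in Proposition \ref{PnDif} in the reverse, synthetic direction where $\cla$ is the given datum rather than a derived object. Once regularity is secured, it remains only to verify that the recovered $v$ indeed produces, through the \emph{forward} procedure of Sections \ref{s3}--\ref{s4}, the very same $\cla$ we started from; this closes the loop and is a consistency check rather than a new difficulty, since both directions are governed by the same node identity \eqref{e51} whose solution is unique.
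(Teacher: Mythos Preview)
Your overall architecture is correct and matches the paper: define $S_{\eT}$ from $\cla$ via \eqref{e53}, verify the node identity \eqref{e51} with $\Phi(x)=\int_0^x\cla$, factorize $S_{\eT}^{-1}=E_\Phi^*E_\Phi$ with a lower-triangular $E_\Phi$, and then read off the potential (the paper uses the equivalent formula $v=i\b_\Phi'J\g_\Phi$ with $\b_\Phi,\g_\Phi$ as in \eqref{e54}--\eqref{e55}, citing \cite[Thm.~2.54, Lem.~2.55]{SSR13} for the fact that this $v$ really has the prescribed $\cla$). The consistency check at the end is indeed handled by that citation, not by a separate argument.

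The genuine gap is in the regularity step, and it is not merely a matter of writing out details. You propose to obtain the $C^1$-regularity of the kernel of $E_\Phi$ by ``running the smoothing arguments of the Steps in Proposition~\ref{PnDif} in the reverse, synthetic direction.'' But those steps are not reversible in the way you need. The entire mechanism of Proposition~\ref{PnDif} rests on the similarity $K=\wt E\,\cI\,\wt E^{-1}$, where $K$ is built from $\g$ via \eqref{e15}, and the smoothness of $\wt N(x,t)$ is inherited from the a priori regularity $\g\in C^2$, which in turn comes from $v\in C^1$. In the present setting you have neither $\g$ nor $v$; you only have $\cla$, $\Phi$, and $S$. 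There is no operator $K$ to take the resolvent of, and the objects $\wt E$, $E_0$ of Remark~\ref{Rktilde} simply do not exist in this direction. Invoking \eqref{e38} or the $C^2$-bootstrap of Steps 1--4 is therefore circular.

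What the paper does instead is to extract the differentiability of the factorization kernel $E_\Phi(x,t)$ directly from the family of truncated inverses $S_\xi^{-1}$, using the Gohberg--Krein theory \cite[pp.~185--186]{GK70}. The key identities are $E_\Phi(\xi,t)=\G_\xi(\xi,t)$ (formula (7.9) there), where $\G_\xi$ is the integral kernel of $S_\xi^{-1}$, together with the Krein differential relation $\tfrac{\p}{\p\xi}\G_\xi(x,t)=\G_\xi(x,\xi)\G_\xi(\xi,t)$ (formula (7.10)) and the resolvent equation \eqref{e60}--\eqref{e61}. Since $\cla\in C^1$ makes $s(x,t)$ continuously differentiable in each variable, these formulas yield that $\tfrac{\p}{\p x}E_\Phi(x,t)$ exists and is continuous. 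That is exactly the missing ingredient: once $E_\Phi$ maps $C^1$ into $C^1$, the representations \eqref{e55} and \eqref{e56} give $\g_\Phi\in C^1$ and $\b_\Phi\in C^2$, hence $v=i\b_\Phi'J\g_\Phi\in C^1$. Your proposal would be repaired by replacing the appeal to Proposition~\ref{PnDif} with this Gohberg--Krein argument.
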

\begin{proof} It follows from the proofs of Theorem 2.54 and Lemma 2.55 in \cite{SSR13}  that the 
potential $v$ in the Dirac-type system \eqref{A2}, such that one has $\Phi^{\prime} = \cla$ (for the 
$m_2 \times m_1$ matrix-valued function $\Phi$ corresponding to this Dirac-type system), is given by the 
formula $v=i \b_{\Phi}^{\prime} J\g_{\Phi}$, where 
\begin{align} 
& \Phi(x) = \int_0^{x}\cla(t) \, dt,  \\
& \b_{\Phi}(x):=\begin{bmatrix}I_{m_1} &0_{m_1 \times m_2} \end{bmatrix}
+\int_0^x \big(S_x^{-1}\cla\big)(t)^*\begin{bmatrix}\Phi(t) & I_{m_2} \end{bmatrix} \, dt,    \lb{e54} \\
& \g_{\Phi}(x):=E_{\Phi}\begin{bmatrix}\Phi(x) & I_{m_2} \end{bmatrix}.      \lb{e55} 
\end{align}  
Here, the operators $E_{\Phi} \in \cB\Big(\big[L^2((0, \eT))\big]^{m_2}\Big)$ are of the form
\begin{equation}
(E_{\Phi} f)(x) = f(x) + \int_0^{x} E_{\Phi}(x,t) f(t) \, dt, \quad f \in \big[L^2((0,\eT))\big]^{m_2},
\end{equation}
with  continuous integral kernels $E_{\Phi}(\dott,\dott)$, and they are uniquely determined by the factorizations $S_{\eT}^{-1}=E_{\Phi}^*E_{\Phi}$,
where $S_{\eT}$ are given by \eqref{e53}. The operators $E_{\Phi}$  are applied (in \eqref{e55}) to the matrix function $\begin{bmatrix}\Phi(x) & I_{m_2} \end{bmatrix}$
columnwise.
We note that $E_{\Phi}(x,t)$ does not depend on the choice
of $\eT$ as long as $\eT \geq x$.

It remains to show that $v$ is continuously differentiable (since \cite[Theorem 2.54]{SSR13} deals with the case of locally bounded $\cla$ and $v$). For that purpose, it suffices to show that $\g$ is continuously differentiable, that $\b$
is two times continuously differentiable, and use $v=i \b_{\Phi}^{\prime} J \g_{\Phi}$.
Since $E_{\Phi}$ does not depend on $\eT$, one can factorize $S_x^{-1}$ and rewrite
\eqref{e54} in the form
\begin{align} &      \lb{e56}
\b_{\Phi}(x):=\begin{bmatrix}I_{m_1} &0_{m_1 \times m_2} \end{bmatrix}
+\int_0^x (E_{\Phi}\cla)(t)^* 
\big(E_{\Phi}\begin{bmatrix}\Phi(t) & I_{m_2} \end{bmatrix}\big) \, dt.
\end{align}  
In view of \eqref{e55} and \eqref{e56}, it suffices to prove that $E_{\Phi}$ maps continuously differentiable functions into continuously
differentiable functions.  The last property of $E_{\Phi}$ follows from the fact that $E_{\Phi}(x,t)$ is differentiable with respect to $x$
and both functions $E_{\Phi}(x,t)$ and $\frac{\p}{\p x}E_{\Phi}(x,t)$ ($x \geq t$) are continuous in $(x,t)$.
This fact will be proven below using some results in \cite[pp.~185--186]{GK70}.

The unique factorization $S_{\eT}^{-1}=E_{\Phi}^*E_{\Phi}$ coincides with 
the unique factorization in \cite{GK70}, that is, $E_{\Phi}(x,t)$ coincides with 
$V_-(x,t)$ in the notation of \cite{GK70}. Hence, according to formula (7.9) in \cite[p.~186]{GK70} one has 
\begin{align} &      \lb{e57}
E_{\Phi}(\xi,t)=\G_{\xi}(\xi,t) \quad (\xi \geq t),
\end{align}
where $\G_{\xi}(x,t)$ is the integral kernel of the operator $S_{\xi}^{-1}$, 
\begin{align} &      \lb{e58}
(S_{\xi}^{-1} f)(x) = f(x) + \int_0^{\xi}\G_{\xi}(x,t) f(t) \, dt, \quad f \in \big[L^2((0,\xi))\big]^{m_2}
\end{align}
(for $\xi \in (0,\eT)$). According to \cite[pp.~185--186]{GK70}, $\G_{\xi}(x,t)$ is continuous with respect to $(\xi, x, t)$ and formula (7.10) in \cite{GK70} holds 
\begin{align} &      \lb{e59}
\frac{\p}{\p \xi}\G_{\xi}(x,t)=\G_{\xi}(x,\xi)\G_{\xi}(\xi,t).
\end{align}
The first equality in eq. (7.7) in \cite{GK70} can then be rewritten in the form
\begin{align} &      \lb{e60}
\G_{\xi}(x,t)=s(x,t)+\int_0^{\xi}s(x,r)\G_{\xi}(r,t) \, dr.
\end{align}
In particular, one obtains 
\begin{align} &      \lb{e61}
\G_{\xi}(\xi,t)=s(\xi,t)+\int_0^{\xi}s(\xi,r)\G_{\xi}(r,t) \, dr, 
\end{align}
where $s(\, \dott, \, \dott \,)$ for our case is given in \eqref{e53}. Clearly, $\frac{\p}{\p \xi}s(\xi,t)$ is  continuous  with respect to $(\xi,t)$, $\xi \geq t$.
Hence, in view of \eqref{e59} and \eqref{e61},  $\frac{\p}{\p \xi}\G_{\xi}(\xi,t)$ is  continuous  with respect to $(\xi,t)$, 
$\xi  \geq t$. In other words (since \eqref{e57} is valid), $E_{\Phi}(x,t)$ is differentiable with respect to $x$
and  $\frac{\p}{\p x}E_{\Phi}(x,t)$ ($x \geq t$) is continuous with respect to $(x,t)$.
Hence, the potential $v$ recovered above belongs to $\big[C^1([0, \infty))\big]^{m_1 \times m_2}$.
Finally, relation \eqref{6.v} for the corresponding $\cla$-function
 is clear from \eqref{e43}.
\end{proof}

Now, we can prove the existence and uniqueness result for solutions of  the $\cla$-equation, that is, for the system \eqref{e44}. Recalling our notation 
\begin{equation} 
\bbR^2_{+,+} = \big\{(x,\ell) \in \bbR^2 \, \big| \, x\geq 0, \, \ell \geq 0\big\} 
\end{equation} 
for the first quadrant, we have the following result. 

\begin{theorem} \label{Tm6.3} The system \eqref{e44} with initial condition
\begin{equation}    \lb{e62}
\cla(x,0)=\cla(x) \in \big[C^1([0,\infty))\big]^{m_2 \times m_1},
\end{equation}
such that the positive definiteness property $S = S_{\eT} > 0$ $($cf.\ \eqref{e53S}$)$ holds for all $\eT \in (0,\infty)$, has a unique solution 
\begin{equation} 
\cla(\dott,\dott)\in \big[C^1(\bbR^2_{+,+})\big]^{m_2 \times m_1}. 
\end{equation} 
\end{theorem}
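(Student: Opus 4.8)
The plan is to establish existence and uniqueness separately, leveraging the machinery already developed. For \textbf{existence}, the key observation is that the problem has effectively been solved already: given $\cla(\dott) \in \big[C^1([0,\infty))\big]^{m_2 \times m_1}$ satisfying the positivity condition \eqref{e53S}, Proposition \ref{PnSufCond} produces a potential $v \in \big[C^1([0,\infty))\big]^{m_1 \times m_2}$ in a Dirac-type system \eqref{A2} whose associated $\Phi$-function satisfies $\Phi' = \cla$. I would then form the two-variable function $\cla(x,\ell) := \Phi'(x,\ell)$ via the shifted systems \eqref{e40} as in the Notation preceding \eqref{e42}. By Theorem \ref{AeqfD}, this function lies in $\big[C^1(\bbR^2_{+,+})\big]^{m_2 \times m_1}$ and satisfies the $\cla$-equation \eqref{e44}. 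Finally, since the initial system is recovered from $\cla(\dott)=\cla(\dott,0)$, the initial condition \eqref{e62} holds by construction. Thus the composition of Proposition \ref{PnSufCond} with Theorem \ref{AeqfD} yields a solution with the required regularity and initial data.

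The harder part is \textbf{uniqueness}, and this is where I expect the main obstacle to lie. The difficulty is that \eqref{e44} is a first-order integro-differential equation in which $\ell$ plays the role of ``time'' but with a spatial derivative $\frac{\p}{\p x}\cla$ on the right-hand side, so it is of transport/Cauchy type rather than a standard ODE; a naive Gr\"onwall argument is obstructed by the loss of one $x$-derivative. The natural strategy is to pass to the Fourier/Laplace side where the equation becomes tractable. Suppose $\cla^{(1)}$ and $\cla^{(2)}$ are two $C^1$ solutions with the same initial data \eqref{e62}. I would reverse the derivation in Step 1 of the proof of Theorem \ref{AeqfD}: associate to each solution a one-parameter family of candidate Weyl--Titchmarsh functions $\vp_\ell^{(j)}(z)$ through the representation \eqref{Repr} (equivalently \eqref{i27}), verify that the $\cla$-equation \eqref{e44} forces each $\vp_\ell^{(j)}(z)$ to satisfy the Riccati-type equation \eqref{e12} in the variable $\ell$, and check that at $\ell=0$ they share the same initial value $\vp_0(z)$ determined by the common $\cla(\dott)$.

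At that point uniqueness reduces to uniqueness for the Riccati initial-value problem \eqref{e12}, which is a genuine ODE in $\ell$ (for each fixed $z$ in a suitable half-plane) with coefficients determined by $v(\ell) = -i\cla(0,\ell)^*$. Here the subtlety is that $v$ itself depends on the unknown solution through $\cla(0,\ell)$, so \eqref{e12} and \eqref{e44} are coupled; I would treat this as a closed system and apply standard ODE uniqueness (Picard--Lindel\"of, using the local Lipschitz dependence of the right-hand side on $\vp_\ell$ and on $v(\ell)$) to conclude $\vp_\ell^{(1)}(z) = \vp_\ell^{(2)}(z)$ for all $\ell$ and all $z$ with $\Im(z)$ large. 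Inverting the Laplace transform \eqref{Repr}, which is injective, then gives $\Phi^{(1)}(\dott,\ell) = \Phi^{(2)}(\dott,\ell)$ and hence $\cla^{(1)} = \cla^{(2)}$ after differentiating in $x$. The main technical burden I anticipate is justifying that each $C^1$ solution of \eqref{e44} does in fact generate, via \eqref{Repr}, a bona fide contractive Weyl--Titchmarsh function satisfying \eqref{e12}---i.e., running the argument of Theorem \ref{AeqfD} backwards rigorously rather than merely formally---together with the uniform-in-$z$ control of error terms needed to make the Laplace-transform correspondence reversible.
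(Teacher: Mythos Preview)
Your existence argument matches the paper's exactly: combine Proposition~\ref{PnSufCond} with Theorem~\ref{AeqfD}.

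For uniqueness, however, your proposed route through the Riccati equation has a genuine gap, and the paper takes a much simpler path that you have dismissed too quickly. The difficulty you correctly identify---that $v(\ell)=-i\cla(0,\ell)^*$ depends on the unknown solution---is fatal to a Picard--Lindel\"of argument for \eqref{e12}: two putative solutions $\cla^{(1)},\cla^{(2)}$ produce two \emph{different} potentials $v^{(j)}$, hence two \emph{different} Riccati equations, so ODE uniqueness for a single equation says nothing. Your suggestion to ``treat this as a closed system'' does not close: the state would have to include the full profile $\cla(\dott,\ell)$ (since $v(\ell)$ is read off at $x=0$, but evolving $\cla$ requires all $x$), and then the transport term $\partial_x$ reappears as an unbounded operator in the evolution, precisely the obstacle you were trying to avoid. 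There is also no reason an arbitrary $C^1$ solution of \eqref{e44} has the growth control needed for the Laplace integral \eqref{Repr} to converge, so the map $\cla^{(j)}\mapsto\vp_\ell^{(j)}$ may not even be defined.

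The paper's observation is that a ``naive Gr\"onwall argument'' \emph{does} work, after one passes to characteristic coordinates. Setting $f(r,\ell):=\cla(r-\ell,\ell)$, the chain rule gives
\[
\frac{\partial f}{\partial \ell}(r,\ell)=\Big(\frac{\partial\cla}{\partial \ell}-\frac{\partial\cla}{\partial x}\Big)(r-\ell,\ell),
\]
so by \eqref{e44} the derivative $\partial_\ell f$ equals only the convolution term, rewritten purely in terms of $f$. Integrating in $\ell$ yields an integral equation with \emph{bounded} nonlinearity, and subtracting the equations for two solutions gives a Gr\"onwall-type inequality for $F(a,\ell):=\max_{\ell\le r\le a}\|f_1(r,\ell)-f_2(r,\ell)\|$ on each triangle $\{x+\ell\le a\}$, from which $F\equiv 0$ follows. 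This is the mechanism used in Simon's original paper for the Schr\"odinger $A$-equation, and it bypasses the Laplace/Riccati machinery entirely.
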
 
\begin{proof} The existence of the solution $\cla$ is immediate after we consecutively use Proposition \ref{PnSufCond}
and Theorem \ref{AeqfD}. 

The uniqueness is proved somewhat similar to the proof of \cite[Theorem 7.1]{Si99}.
Indeed, set $f(r,\ell)=\cla(r-\ell, \ell)$, then,  
\begin{align} &      \lb{e63}
\left(\frac{\p}{\p \ell}f\right)(r,\ell)=\left(\frac{\p}{\p \ell}\cla\right)(r-\ell, \ell)-\left(\frac{\p}{\p x}\cla\right)(r-\ell,\ell).
\end{align}
In view of \eqref{e44} and \eqref{e63}, one concludes that 
\begin{align}       
f(r, \ell_2)- f(r, \ell_1)&=\int_{\ell_1}^{\ell_2}\int_0^{r-\ell}
\cla(r-\ell-t,\ell)\cla(0,\ell)^*\cla(t, \ell) \, dt \, d\ell
\no \\ \lb{e64} &
=\int_{\ell_1}^{\ell_2}\int_0^{r-\ell}f(r-t,\ell)f(\ell,\ell)^*f(t+\ell, \ell) \, dt \, d\ell, 
\end{align}
where $r\geq \ell_2>\ell_1 \geq 0$. 

Next, we prove the uniqueness of the solution of \eqref{e44}, \eqref{e62}
by contradiction. Assuming that there are two solutions $\cla_1$ and $\cla_2$, 
we set $f_k(r, \ell)=\cla_k(r-\ell, \ell)$, $k=1,2$, and use \eqref{e64} in order to obtain
\begin{align}       \nn
& f_1(r, \ell_2)- f_2(r, \ell_2)  =
 f_1(r, \ell_1)- f_2(r, \ell_1)
\\ \nn & \quad
+\int_{\ell_1}^{\ell_2}\int_0^{r-\ell}\Big(\big(f_1(r-t,\ell)-f_2(r-t,\ell)\big)
f_1(\ell,\ell)^* f_1(t+\ell, \ell)
\\  \nn &\quad  
+f_2(r-t,\ell)\big(f_1(\ell,\ell)^*- f_2(\ell,\ell)^*\big)f_1(t+\ell, \ell)
\\  \lb{e65} & \quad 
+f_2(r-t,\ell)f_2(\ell,\ell)^*\big(f_1(t+\ell, \ell)-f_2(t+\ell, \ell)\big)\Big) \, dt \, d\ell .
\end{align}
Putting
\begin{align} &      \lb{e66}
F(a,\ell):=\max_{\ell\leq r \leq a} (\|f_1(r,\ell) -f_2(r,\ell)\|)
\end{align}
(with $\|\dott \|$ a convenient matrix norm), and taking into account \eqref{e65}, one derives 
\begin{align} &      \lb{e67}
F(a,\ell_2) \leq F(a,\ell_1)+\wt C \int_{\ell_1}^{\ell_2}(r-\ell) F(a, \ell) \, d \ell 
\leq F(a,\ell_1)+C \int_{\ell_1}^{\ell_2} F(a, \ell) \, d \ell
\end{align}
for any fixed $a>0$ and some $\wt C, \, C >0$. Our concluding arguments now coincide with the end of the
proof of \cite[Theorem 7.1]{Si99}: formula \eqref{e67} yields
\begin{equation}       \lb{e68}
\clf(a,\ell_2)\leq \clf(a, \ell_1)+C(\ell_2-\ell_1)\clf(a,\ell_2),  
\end{equation} 
where 
\begin{equation}
\clf(a, \ell):=\max_{0\leq s \leq \ell} (F(a,s)).
\end{equation}
According to \eqref{e62}, one concludes that $f_k(r,0)=\cla_k(r,0)=\cla(r)$, and so $F(a, 0)= 0_{m_2 \times m_1}$. Hence, $\clf(a, 0) = 0_{m_2 \times m_1}$.
Therefore, formula \eqref{e68} with $\ell_1=0$ (and $\ell_2=\ell$)  implies 
that $\clf(a, \ell) = 0_{m_2 \times m_1}$ for $\ell <1/C$.
Repeating this argument a finite number of times then yields $\clf(a, \ell) = 0_{m_2 \times m_1}$ 
for all $\ell \leq a$, that is,
$\cla_1(x, \ell)=\cla_2(x, \ell)$ in the triangle $x+\ell \leq a$. Since the equality holds for any $a>0$,
the solution $\cla (x,\ell)$ is unique. 
\end{proof}

We refer to \eqref{1.17a} and \eqref{1.17b} for a succinct visual summary of the direct and, especially, the inverse approach developed in this paper. 

\begin{remark} \lb{r6.4} For convenience of exposition, we considered the Dirac-type system \eqref{A2} 
on the half-axis and the $\cla$-equation \eqref{e44} on the first quadrant $\bbR^2_{+,+}$ 
(i.e., we focused on global solutions). However, our considerations are applicable to the local case as well. Indeed, given $\cla(x,0) \in \big[C^1([0, \eT])\big]^{m_2 \times m_1}$, we use the relations $v=i \b_{\Phi}^{\prime} J\g_{\Phi}$ and \eqref{e54}, \eqref{e55} in the proof of Proposition \ref{PnSufCond} in order to recover the 
Dirac-type system on [0,\eT] with a continuously differentiable potential $v \in \big[C^1([0,\eT])\big]^{m_1 \times m_2}$. Clearly, $v(\dott)$ may be easily extended to $v \in \big[C^1([0, \infty))\big]^{m_1 \times m_2}$. 
For each such extension (and associated Weyl--Titchmarsh function) the global solution of \eqref{e44} 
exists, and so the local  solutions exist as well. 
Moreover, according to the proofs of  Theorem~2.54 and Lemma~2.55 in \cite{SSR13}, the corresponding $\cla$-function
coincides on $[0,\eT]$ with the initial function $\cla(x,0)$. The uniqueness of the solution of \eqref{e44} is also proved locally, namely, for $\cla(x,\ell)$
in the triangle  $x+\ell \leq \eT$ $($see the proof of Theorem \ref{Tm6.3}$)$. To illustrate this point we briefly derive a version of the local Borg--Marchenko uniqueness theorem for Dirac-type systems \eqref{A2} next. \hfill $\diamond$
\end{remark}

\begin{theorem}\lb{t6.5}
Let $\vp(z)$ and $\wt \vp(z)$ be the Weyl--Titchmarsh functions of two Dirac-type systems
on the half-axis with corresponding potentials $v \in \big[C^1([0, \infty))\big]^{m_1 \times m_2}$ and 
$\wt v \in \big[C^1([0, \infty))\big]^{m_1 \times m_2}$, respectively.
Suppose that on some ray $\Re(z)=c \, \Im(z)$ $(c\in \BR$, $\Im(z)>0)$ for each $\ve>0$ one has $($again for a convenient 
matrix norm $\|\dott \|$$)$ 
\begin{equation}       \lb{eBM}
\big\|\vp(z)-\wt \vp(z)\big\| \underset{(|z| \to \infty}{=}O\Big(e^{2i(\eT-\ve)z}\Big).
\end{equation}
Then, 
\begin{equation}       \lb{eBM'}
v(x) =\wt v(x) \, \text{ for all } \, x\in [0,\eT].
\end{equation}
\end{theorem}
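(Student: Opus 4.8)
The plan is to reduce the theorem to the equality of the two $\cla$-functions on $[0,\eT]$ and then invoke the local reconstruction of the potential from the $\cla$-function. Write $\cla$ and $\wt\cla$ for the $\cla$-functions (at $\ell=0$) of the two systems and set $B:=\cla-\wt\cla\in\big[C^1([0,\infty))\big]^{m_2\times m_1}$. Fix an arbitrary $\eT'>\eT$. Applying the asymptotic representation \eqref{e46} (a consequence of \eqref{i27}, with $\eT'$ in place of $\eT$) to each system separately, and subtracting, one obtains
\[
\vp(z)-\wt\vp(z)\underset{\Im(z)\to\infty}{=}-\int_0^{\eT'}e^{2ixz}B(x)\,dx+O\big(z e^{2i\eT' z}/(\Im(z))^{1/2}\big).
\]

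Next I would feed in the hypothesis \eqref{eBM}. Parametrizing the ray by $z=(c+i)t$ with $t=\Im(z)\to+\infty$, one has $\big|e^{2i(\eT-\ve)z}\big|=e^{-2(\eT-\ve)t}$, while the error term above is $O\big(t^{1/2}e^{-2\eT' t}\big)$; since $\eT'>\eT>\eT-\ve$ the error is negligible, so \eqref{eBM} gives
\[
\int_0^{\eT'}e^{2ixz}B(x)\,dx\underset{t\to\infty}{=}O\big(e^{-2(\eT-\ve)t}\big)
\]
along the ray, for every $\ve>0$. The function $G(z):=\int_0^{\eT'}e^{2ixz}B(x)\,dx$ is entire of exponential type and bounded in the closed upper half-plane (by $\int_0^{\eT'}\|B(x)\|\,dx$). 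The heart of the argument, and the step I expect to be the main obstacle, is to deduce that $B\equiv 0$ on $[0,\eT]$ from decay along a \emph{single} ray: the oscillatory factor $e^{2i\Re(z)x}$ could a priori produce cancellation that accelerates the decay. I would resolve this through the theory of indicator functions of Cartwright-class entire functions (the Paley--Wiener and Phragm\'en--Lindel\"of circle of ideas; cf.\ \cite{Ma85}). Writing $a:=\inf\supp\big(B|_{[0,\eT']}\big)$ and letting $\t_0\in(0,\pi)$ be the argument of the ray, the indicator of $G$ in the direction $\t_0$ equals $-2a\sin\t_0$, so the bound $O\big(e^{-2(\eT-\ve)t}\big)$ forces $-2a\sin\t_0\leq-2\eT\sin\t_0$, i.e.\ $a\geq\eT$. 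Since $B$ is continuous, this yields $B\equiv0$ on $[0,\eT]$, that is, $\cla(x)=\wt\cla(x)$ for all $x\in[0,\eT]$.

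Finally, I would propagate this equality of initial data to the potentials. Both $\cla$ and $\wt\cla$ arise from genuine $C^1$ potentials, so by Corollary \ref{CyNesCond} they satisfy the positivity condition \eqref{e53S}, and the local reconstruction of Remark \ref{r6.4} (equivalently, Proposition \ref{PnSufCond} combined with the triangle uniqueness of Theorem \ref{Tm6.3}) applies. Solving the $\cla$-equation \eqref{e44} with the common initial condition $\cla(\dott,0)=\wt\cla(\dott,0)$ on $[0,\eT]$, the uniqueness in the triangle $x+\ell\leq\eT$ gives $\cla(0,\ell)=\wt\cla(0,\ell)$ for all $\ell\in[0,\eT]$. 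The recovery formula \eqref{v} (see also \eqref{e43}) then yields $v(\ell)=-i\cla(0,\ell)^*=-i\wt\cla(0,\ell)^*=\wt v(\ell)$ for all $\ell\in[0,\eT]$, which is \eqref{eBM'}. Apart from the single-ray transform-uniqueness step, every ingredient has already been established in the paper.
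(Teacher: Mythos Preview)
Your argument is correct and follows the paper's architecture exactly: represent $\vp-\wt\vp$ through the $\cla$-function asymptotics \eqref{e46}, use complex analysis on the ray to force $\cla\equiv\wt\cla$ on $[0,\eT]$, then finish via the triangle uniqueness in Theorem~\ref{Tm6.3} (Remark~\ref{r6.4}) and the recovery formula \eqref{e43}. The only difference is in the complex-analytic step. You invoke the Cartwright indicator identity $h_G(\theta_0)=-2a\sin\theta_0$ (with $a=\inf\supp B$) to read off $a\geq\eT$ directly; the paper instead multiplies the truncated transform by $e^{-2i(\eT-2\ve)z}$ to produce an entire function $W$ that is bounded on $\BC_-\cup\BR$ and tends to zero along the ray, then applies Phragm\'en--Lindel\"of in the two sectors between the ray and $\BR$ together with Liouville to conclude $W\equiv0$, and finally lets $\ve\downarrow0$. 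The two devices are equivalent---your indicator equality is precisely what the paper's Phragm\'en--Lindel\"of/Liouville normalization encodes---so there is no substantive divergence; the paper's route is a bit more elementary (no appeal to the $\sin$-shape of Cartwright indicators), while yours compresses the same content into a single citation.
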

\begin{proof}  Using \eqref{e42}, \eqref{e45}, and Proposition \ref{PnDif} one integrates in \eqref{i27}
by parts and derives
\begin{equation}       \lb{BM1}
\vp(z)=-\int_0^{\eT-\ve}e^{2ixz}\cla(x,0) \, dx + O\Big(z e^{2i (\eT-\ve) z} \big/ (\Im(z))^{1/2}\Big) \quad (\eT>2\ve >0).
\end{equation}
Clearly, one has a similar formula for $\wt \vp(z)$, implying the relation
\begin{align}       \lb{BM2}
\vp(z)-\wt \vp(z)=&\int_0^{\eT-2\ve}e^{2ixz}\big[\wt \cla(x,0)- \cla(x,0)\big] \, dx + 
\\ & \nn +\int_{\eT-2\ve}^{\eT-\ve}e^{2ixz}\big[\wt \cla(x,0)- \cla(x,0)\big] \, dx + O\Big(z e^{2i (\eT-\ve) z} \big/ (\Im(z))^{1/2}\Big). 
\end{align}
Here, $\wt \cla$ is the $\cla$-function for Dirac-type system \eqref{A2} corresponding to the potential $\wt v$. Taking into account \eqref{eBM} and \eqref{BM2}, 
one infers that the entries of the $m_2 \times m_1$ matrix-valued function 
\begin{align}       \lb{BM2'}
W(z)=e^{-2i(\eT-2\ve)z}\int_0^{\eT-2\ve}e^{2ixz}\big[\wt \cla(x,0)- \cla(x,0)\big] \, dx
\end{align}
tend to zero on the ray $\Re(z)=c \, \Im(z)$. According to the definition \eqref{BM2'}, these entries  are also bounded on $\BC_-\cup \BR$
(in fact, they tend to zero).
Thus, applying the Phragmen--Lindel\"of theorem to the entries of $W(z)$
for the two sectors between our ray and the real line $\BR$, one derives that $W(z)$ is constant.
Moreover, since $W(z)$ tends to zero on some rays, one has
$W(z)\equiv 0_{m_2 \times m_1}$. Hence, $\cla(x,0)=\wt \cla(x,0)$
for all $x\in [0,\eT-2\ve]$ and all $\ve >0$, that is, for all
$x\in [0,\eT]$. Now, \eqref{eBM'} follows from the proof of the uniqueness in Theorem \ref{Tm6.3} (see also Remark \ref{r6.4}) and from  formula
\eqref{6.v}.
\end{proof}

Several versions of the local Borg--Marchenko uniqueness result, Theorem \ref{t6.5}, exist in the literature under varying hypotheses on $v$. For the case of locally bounded rectangular matrices $v$ we refer to \cite{FKRS12}, \cite[Section~2.3.3]{SSR13}, for the case of locally square integrable rectangular matrics $v$, see \cite{Sa15}. A particular normal form of self-adjoint Dirac-type operators involving  locally integrable square matrices $v$ was considered in \cite{CG02}. The current proof of Theorem \ref{t6.5} based on the $\cla$-function concept distinguishes itself due to its particular simplicity. 

We emphasize once more that the results in this paper apply to matrix-valued Schr\"odinger operators as indicated in 
Remark \ref{r1.7}\,$(i)$.

\appendix
\section{Various Results in Support of Step 2 \\ in the Proof of Theorem \ref{AeqfD}} \lb{sA}
\renewcommand{\theequation}{A.\arabic{equation}}
\renewcommand{\thetheorem}{A.\arabic{theorem}}
\setcounter{theorem}{0} \setcounter{equation}{0}

We divide this appendix into three parts. \\

\paragraph{{\bf Part 1.}} We consider the Dirac-type  system  \eqref{A2} on $[0,\infty)$, setting
$u(x,z):=u_0(x,z)$ (implying $u_0(0,z) = I_m$) and $\vp(x,z):=\vp_0(x,z)$ (cf.\ \eqref{e4}), and introduce the differential
expression $\cL=-i J \frac{d}{dx}- V(x)$. For the fundamental solution 
$u(x,z)$ of the system \eqref{A2}, one has
\begin{align} &      \lb{A.1}
\cL u=z u.
\end{align}

Recalling that $\vp$ is the Weyl--Titchmarsh function of system \eqref{A2}, it is convenient to introduce also the fundamental solutions $w(x,z) $ with a normalization at $x=0$ different from $u(x,z)$:
\begin{align} &      \lb{A.2}
w(x,z)=u(x,z)Q(z), \quad Q(z):=\begin{bmatrix} 
I_{m_1} & 0_{m_1 \times m_2} \\ \vp(z) & I_{m_2}
\end{bmatrix}.
\end{align}
By $L$ we denote the operator acting in $\big[L^2((0,\infty))\big]^m$
via the differential expression $\cL$, such that functions $Y$ in the domain of $L$  satisfy the boundary condition
\begin{align} &      \lb{A.3}
\begin{bmatrix}
I_{m_1} & 0_{m_1 \times m_2} \end{bmatrix}Y(0)=0_{m_1 \times 1}.
\end{align}

A discussion of the resolvent of Dirac-type operators on an interval can be found, for instance, in \cite[Theorem 9.4.1]{At64}. 
In our case, the situation is similar and simple calculations
show that
\begin{align} &      \lb{A.4}
\cL Y_p=zY_p+f \quad {\mathrm{for}} \quad Y_p(x,z):=i w(x,z)\int_0^x w(t,z)^{-1} J f(t) \, dt,
\end{align}
where the assumption $f\in \big[L^2((0,\infty))\big]^m$ suffices for our purposes.
Formulas \eqref{A.1} and \eqref{A.4} imply that (for $g$ independent of $x \geq 0$) 
\begin{align} &      \lb{A.5}
\big(\cL -z I_{[L^2((0,\infty))]^{m}}\big)Y(x,z)=f(x) \, \text{ for } \, Y(x,z)=Y_p(x,z)+w(x,z)g(z).
\end{align}

Next, we assume that the supports of $V$ and $f$ belong to some finite interval $[0,a]$ for 
some $a > 0$, 
\begin{align} &      \lb{A.5'}
\supp \, (V) \subseteq [0,a], \quad \supp \,  (f) \subseteq [0,a].
\end{align}

We note that when $f$, $g$ and $Y$ are matrix-valued functions (instead of being vector-valued),
relations like $LY$ and $Y(\dott,z)\in \big[L^2((0,\infty))\big]^{m \times r}$ ($1 \leq r \leq m$)
are considered columnwise and the formulas above remain valid. The scalar product
$(Y_1, Y_2)$ in $\big[L^2((0,\infty))\big]^m$ is understood for matrix-valued as well as vector-valued functions as
\begin{equation} 
(Y_1, Y_2)_{[L^2((0,\infty))]^m} = \int_0^\infty Y_2(x)^*Y_1(x) \, dx.
\end{equation}

Next, we choose $g(z)$ in \eqref{A.5} so that the following conditions hold:
\begin{align} &      \lb{A.6}
Y(x,z)\in \big[L^2((0,\infty))\big]^m, \quad \begin{bmatrix}
I_{m_1} & 0_{m_1 \times m_2} \end{bmatrix}Y(0)=0_{m_1 \times 1}. 
\end{align}
Taking into account \eqref{A.4} and \eqref{A.5}, one rewrites $Y$
as follows:
\begin{align} &      \lb{A.7}
Y(x,z)=w(x,z)\left(g(z)+i \int_0^x w(t,z)^{-1} J f(t) \, dt\right).
\end{align}
Employing equations \eqref{A.5'} and \eqref{A.6}, one simplifies \eqref{A.7} for $x\geq a$
and fixes the required function $g(z)$ as follows 
\begin{align} &      \lb{A.8}
Y(x,z)=w(x,z)\left(g(z)+i \int_0^a w(t,z)^{-1} J f(t) \, dt\right)
\quad {\mathrm{for}} \quad x \geq a, 
\\ &      \lb{A.9}
g(z)=
\begin{bmatrix}
g_1(z) \\ g_2(z)
\end{bmatrix},
\quad
g_1(z)=0_{m_1}, \quad g_2(z)=-i \begin{bmatrix} 0_{m_2 \times m_1} &
I_{m_2} 
\end{bmatrix}\int_0^a w(t,z)^{-1} J f(t) \, dt.
\end{align}
Indeed, formulas \eqref{A.2}, \eqref{A.8}, and \eqref{A.9} show that $Y(x,z)$ for $x\geq a$ takes on the form
\begin{equation} 
Y(x,z)=u(x,z)\begin{bmatrix} 
I_{m_1}  \\ \vp(z) 
\end{bmatrix}c(z). 
\end{equation} 
Hence, in view of \eqref{e2}, the first condition in \eqref{A.6} is fulfilled. The second relation in \eqref{A.6}
follows from \eqref{A.7}, from the normalization $w(0,z)=Q(z)$ and from the equality $g_1(z)=0$.
It is easy to see that the requirements \eqref{A.9} are not only sufficient but also necessary for \eqref{A.6} to be valid.

Formulas \eqref{A.5} and \eqref{A.6} prove the following result.

\begin{lemma} \lb{lA.1} 
Suppose that $f\in \big[L^2((0,\infty))\big]^m$ and $\supp \,  (f) \subseteq [0,a]$. Then
\begin{align} &      \lb{A.10}
(L-z I_{[L^2((0,\infty))]^{m}})^{-1}f=Y,
\end{align}
where $Y(x,z)$ is given by \eqref{A.7} and \eqref{A.9}.
\end{lemma}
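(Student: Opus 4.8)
The plan is to identify $Y$ with the resolvent by a direct verification: I would show that the explicitly constructed $Y$ belongs to $\dom(L)$ and satisfies $(\cL - z I_{[L^2((0,\infty))]^{m}})Y = f$, after which, for $z \in \BC_+$ (where $\vp(z)$ is defined and $z$ lies in the resolvent set of $L$), uniqueness of the resolvent forces $Y = (L - z I_{[L^2((0,\infty))]^{m}})^{-1}f$, which is exactly \eqref{A.10}.

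The differential identity comes essentially for free: by \eqref{A.5}, the equation $(\cL - z I_{[L^2((0,\infty))]^{m}})Y = f$ holds for \emph{every} constant-in-$x$ choice of $g(z)$ in \eqref{A.7}, so the only remaining issue is to pin down $g(z)$ so that $Y \in \dom(L)$. Recalling that $\dom(L)$ consists of the square-integrable (locally absolutely continuous) $Y$ with $\cL Y \in [L^2((0,\infty))]^m$ obeying the boundary condition \eqref{A.3}, and that \eqref{A.5} already gives $\cL Y = z Y + f$, the requirement $\cL Y \in [L^2((0,\infty))]^m$ is automatic from $Y \in [L^2((0,\infty))]^m$ because $f$ is compactly supported. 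Thus everything reduces to the two membership conditions recorded in \eqref{A.6}.

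The core of the argument is the square-integrability at infinity. By the support assumption \eqref{A.5'}, for $x \geq a$ the representation \eqref{A.7} collapses to $Y(x,z) = w(x,z) c(z)$ with the constant vector $c(z) = g(z) + i \int_0^a w(t,z)^{-1} J f(t)\,dt$, and \eqref{A.2} turns this into $Y(x,z) = u(x,z)Q(z)c(z)$. Here I would invoke the defining $L^2$-property of the Weyl--Titchmarsh function from \eqref{e2}: a solution $u(\dott,z)v$ lies in $\big[L^2((a,\infty))\big]^m$ precisely when $v$ is of the form $\begin{bmatrix} I_{m_1} \\ \vp(z)\end{bmatrix}d$ for some $d \in \BC^{m_1}$. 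Since $Q(z)c(z) = \begin{bmatrix} c_1(z) \\ \vp(z)c_1(z)+c_2(z)\end{bmatrix}$, this membership is equivalent to the vanishing of the lower block, $c_2(z) = 0$, which is exactly the prescription for $g_2(z)$ in \eqref{A.9}. The boundary condition is the easier half: evaluating \eqref{A.7} at $x = 0$ gives $Y(0,z) = w(0,z)g(z) = Q(z)g(z)$, whose upper $m_1$ block equals $g_1(z)$, so \eqref{A.3} forces $g_1(z) = 0$, again matching \eqref{A.9}.

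With both conditions in \eqref{A.6} established for the choice \eqref{A.9}, I would conclude $Y \in \dom(L)$ and hence $LY = \cL Y = zY + f$, i.e.\ $(L - z I_{[L^2((0,\infty))]^{m}})Y = f$; applying $(L - z I_{[L^2((0,\infty))]^{m}})^{-1}$ yields \eqref{A.10}. The main obstacle I anticipate is precisely the tail analysis: recognizing that $u(\dott,z)Q(z)c(z)$ is the admissible $L^2$-solution if and only if $c_2(z) = 0$ is the one genuinely spectral step, since it is the sole place where the defining property of $\vp$ enters. The remainder amounts to bookkeeping of the constant $g(z)$ and is routine.
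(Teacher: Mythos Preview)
Your proposal is correct and follows essentially the same route as the paper: the paper also derives the differential identity from \eqref{A.5}, then shows that the choice \eqref{A.9} forces exactly the two conditions in \eqref{A.6} by evaluating $Y(0,z)=Q(z)g(z)$ for the boundary condition and by reducing $Y(x,z)$ for $x\geq a$ to the Weyl form $u(x,z)\begin{bmatrix} I_{m_1} \\ \vp(z)\end{bmatrix}c(z)$ for the $L^2$ condition, after which Lemma~\ref{lA.1} is stated as an immediate consequence. Your write-up is slightly more explicit about why the $L^2$ tail condition is \emph{equivalent} to $c_2(z)=0$ (the paper merely remarks that \eqref{A.9} is also necessary), but the substance is identical.
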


Taking into account \eqref{A2}, one obtains the equalities
\begin{align} &      \lb{A.11}
\frac{d}{dt}\big(w(t,z)^{-1}e^{i tz J}\big)=w(t,z)^{-1}(-i) J V(t) e^{i tz J},
\\[1mm]  &      \lb{A.12}
\frac{d}{dx}\big(e^{-i xz J}u(x,z)\big)=e^{-i xz J} i J V(x) u(x,z).
\end{align}
Equalities \eqref{A.10}--\eqref{A.12} are essential in the proof of the next proposition.

\begin{proposition} \lb{pA.2} 
Suppose that $V$ satisfies the conditions $V\in \big[L^2((0,\infty))\big]^{m \times m}$ and 
$\supp \,  (V) \subseteq [0,a]$.
Then
\begin{align} &      \lb{A.13}
\vp(z)= i \int_0^ae^{2i x z}v(x)^*dx+ i \big((L-z I_{[L^2((0,\infty))]^{m}})^{-1}f_1, f_2\big), \\[1mm] 
&      \lb{A.14}
 f_1(x,z):=V(x)e^{i x z J}\begin{bmatrix}
I_{m_1} \\ 0
\end{bmatrix}, \quad f_2:=V(x)e^{i x \ov{z} J}\begin{bmatrix}
0 \\I_{m_2} 
\end{bmatrix}.
\end{align}
\end{proposition}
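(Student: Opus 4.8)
The plan is to evaluate the resolvent term $\big((L-zI_{[L^2((0,\infty))]^{m}})^{-1}f_1,f_2\big)$ explicitly and show that it produces $\vp(z)$ together with the stated integral. Since $\supp(V)\subseteq[0,a]$, the factor $V$ in \eqref{A.14} forces $\supp(f_1),\supp(f_2)\subseteq[0,a]$, so Lemma \ref{lA.1} applies and the inner product collapses to $\int_0^a f_2(x)^*Y(x,z)\,dx$, with $Y$ given by \eqref{A.7}, \eqref{A.9}. Writing $P:=\begin{bmatrix}I_{m_1}\\0_{m_2\times m_1}\end{bmatrix}$ and $R:=\begin{bmatrix}0_{m_1\times m_2}\\I_{m_2}\end{bmatrix}$, identity \eqref{A.11} gives $w(t,z)^{-1}Jf_1(t)=i\frac{d}{dt}\big(w(t,z)^{-1}e^{itzJ}\big)P$, so the integral in \eqref{A.7} telescopes to $i\big(w(x,z)^{-1}e^{ixzJ}-Q(z)^{-1}\big)P$ (using $w(0,z)=Q(z)$). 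Substituting this together with $w=uQ$ from \eqref{A.2}, I would obtain the closed form $Y(x,z)=w(x,z)g(z)-e^{ixzJ}P+u(x,z)P$ on $[0,a]$.

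Next I would insert $Y$ into $\int_0^a f_2^*Y\,dx$ and treat the three resulting terms. Using $f_2(x)^*=R^{\top}e^{-ixzJ}V(x)$ and the block identity $e^{-ixzJ}Ve^{ixzJ}=\begin{bmatrix}0&e^{-2ixz}v\\e^{2ixz}v^*&0\end{bmatrix}$, the middle term $-\int_0^a f_2^*e^{ixzJ}P\,dx$ equals $-\int_0^a e^{2ixz}v(x)^*\,dx$; multiplied by $i$ this precisely cancels the explicit integral in \eqref{A.13}. For the remaining two terms I would integrate by parts using \eqref{A.12} (and its verbatim analogue for $w$, which solves the same equation) together with $R^{\top}J=-R^{\top}$: each integrand becomes a total derivative, so both terms reduce to boundary values at $x=0$ and $x=a$. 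The $x=0$ boundary value of the $w$-term is $R^{\top}w(0,z)g(z)=R^{\top}Q(z)g(z)=g_2(z)$, which survives as the leading $g_2(z)$ below, while the $x=a$ values combine into $R^{\top}e^{-iazJ}u(a,z)\begin{bmatrix}I_{m_1}\\g_2(z)\end{bmatrix}$; altogether $i$ times the sum of these two terms equals $g_2(z)-R^{\top}e^{-iazJ}u(a,z)\begin{bmatrix}I_{m_1}\\g_2(z)\end{bmatrix}$.

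The hard part is disposing of the $x=a$ boundary contribution, and this is exactly where the Weyl/$L^2$ normalization enters. Setting $M:=u(a,z)$, since $V\equiv 0$ on $[a,\infty)$ one has $u(x,z)=e^{i(x-a)zJ}M$ there, so the membership \eqref{e2} of $u(\dott,z)\begin{bmatrix}I_{m_1}\\\vp(z)\end{bmatrix}$ in $L^2$ forces the lower $m_2$ block of $M\begin{bmatrix}I_{m_1}\\\vp(z)\end{bmatrix}$ to vanish, i.e. $R^{\top}M\begin{bmatrix}I_{m_1}\\\vp(z)\end{bmatrix}=0$; equivalently $M\begin{bmatrix}I_{m_1}\\\vp\end{bmatrix}=PX$ with $X=u_{0,11}(a,z)+u_{0,12}(a,z)\vp(z)$ invertible by \eqref{e6}. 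I would first use this to prove $g_2(z)=\vp(z)$: the telescoped form of \eqref{A.9} gives $g_2=\begin{bmatrix}-\vp&I_{m_2}\end{bmatrix}M^{-1}e^{iazJ}P+\vp$, and since $e^{iazJ}P=e^{iaz}P$ and $M^{-1}P=\begin{bmatrix}I_{m_1}\\\vp\end{bmatrix}X^{-1}$, the annihilator identity $\begin{bmatrix}-\vp&I_{m_2}\end{bmatrix}\begin{bmatrix}I_{m_1}\\\vp\end{bmatrix}=0$ kills the first summand, leaving $g_2=\vp$. With $g_2=\vp$ in hand, the $x=a$ term becomes $R^{\top}e^{-iazJ}M\begin{bmatrix}I_{m_1}\\\vp\end{bmatrix}=e^{iaz}R^{\top}M\begin{bmatrix}I_{m_1}\\\vp\end{bmatrix}=0$ by the same $L^2$ condition. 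Hence $i$ times the sum of the two boundary terms collapses to $g_2(z)=\vp(z)$, and combining with the cancellation of the middle term establishes \eqref{A.13}. I expect the only genuine subtlety to be the careful bookkeeping of the block factors $P,R,J$ and the two integrations by parts; once $g_2=\vp$ is identified, every $x=a$ contribution vanishes automatically.
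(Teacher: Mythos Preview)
Your argument is correct and follows essentially the same route as the paper's proof: both telescope the integral in \eqref{A.7} via \eqref{A.11} to obtain $Y=w g - e^{ixzJ}P + uP$, both use \eqref{A.12} (and $R^{\top}J=-R^{\top}$) to reduce the pairing with $f_2$ to boundary values, and both invoke the $L^2$ characterization $R^{\top}u(a,z)\begin{bmatrix}I_{m_1}\\ \vp\end{bmatrix}=0$ to identify $g_2=\vp$ and kill the $x=a$ contribution. The only difference is organizational: the paper first proves $g_2=\vp$ (its \eqref{A.19}), substitutes to get the clean form $Y=u\begin{bmatrix}I_{m_1}\\ \vp\end{bmatrix}-e^{ixzJ}P$, and then pairs with $f_2$, whereas you compute the pairing first with $g_2$ still symbolic and only afterward show $g_2=\vp$; the ingredients and logic are identical.
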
 
\begin{proof} 
Taking into account \eqref{A.7}, \eqref{A.10} and \eqref{A.11}, one rewrites $(L-zI)^{-1}f_1$, where
$f_1$ is given in \eqref{A.14}, in the form
\begin{align} &      \lb{A.15}
(L - z I_{[L^2((0,\infty))]^{m}})^{-1}f_1 = \big(u(x,z)-e^{i x z J}\big)\begin{bmatrix}
I_{m_1} \\ 0_{m_2 \times m_1}
\end{bmatrix}+ w(x,z)g(z).
\end{align}
Next, in view of \eqref{A.9}, \eqref{A.11}, and \eqref{A.14}, one derives
\begin{align} &      \lb{A.16}
g(z)=\begin{bmatrix}
0_{m_1 \times m_2} \\ I_{m_2} 
\end{bmatrix}\begin{bmatrix}
0_{m_2 \times m_1}  & I_{m_2} 
\end{bmatrix}\big(w(a,z)^{-1}e^{i a z J}- Q(z)^{-1}\big)
\begin{bmatrix}
I_{m_1} \\ 0_{m_2 \times m_1}
\end{bmatrix}.
\end{align}
Using \eqref{A.2}, one rewrites \eqref{A.16} as
\begin{align}  \nonumber    
g(z)&=\begin{bmatrix}
0_{m_1 \times m_2} \\ I_{m_2} 
\end{bmatrix}\begin{bmatrix}
-\vp(z)  & I_{m_2} 
\end{bmatrix}\big(u(a,z)^{-1}e^{i a z J}- I_m\big)
\begin{bmatrix}
I_{m_1} \\ 0_{m_2 \times m_1}
\end{bmatrix}
\\[1mm]   & \lb{A.17}
=\begin{bmatrix}
0_{m_1 \times m_2} \\ I_{m_2} 
\end{bmatrix}\left(\vp(z)+\begin{bmatrix}
-\vp(z)  & I_{m_2} 
\end{bmatrix}u(a,z)^{-1}e^{i a z J}\begin{bmatrix}
I_{m_1} \\ 0_{m_2 \times m_1}
\end{bmatrix}\right).
\end{align}
Since $\supp \, (V) \subseteq [0,a]$, one has $u(x,z)=e^{i (x-a)z J}u(a,z)$ for $x \geq a$. 
Therefore, one  obtains the relation
\begin{equation} 
u(x,z)\begin{bmatrix}
I_{m_1} \\ \vp(z)
\end{bmatrix}=e^{i (x-a)z J}u(a,z)\begin{bmatrix}
I_{m_1} \\ \vp(z)
\end{bmatrix} \in \big[L^2((0,\infty))\big]^m, 
\end{equation} 
implying that $\begin{bmatrix}
0_{m_2 \times m_1} & I_{m_2} 
\end{bmatrix}u(a,z)\begin{bmatrix}
I_{m_1} \\ \vp(z)
\end{bmatrix}=0_{m_2 \times m_1}$ (in view of $\Im(z)>0$). That is,
\begin{align} &      \lb{A.18}
\vp(z)=-u_{22}(a,z)^{-1}u_{21}(a,z), \quad \begin{bmatrix}
-\vp(z)  & I_{m_2} 
\end{bmatrix}=u_{22}(a,z)^{-1}\begin{bmatrix}
u_{21}(a,z)  & u_{22}(a,z) 
\end{bmatrix}.
\end{align}
Formula \eqref{A.17} and the second equality in \eqref{A.18} yield
\begin{align} &      \lb{A.19}
g(z)=\begin{bmatrix}
0_{m_1} \\ \vp(z)
\end{bmatrix}.
\end{align}
Finally, from \eqref{A.15} and \eqref{A.19} one concludes that
\begin{align} &      \lb{A.20}
(L-z I_{[L^2((0,\infty))]^{m}})^{-1}f_1 = u(x,z)\begin{bmatrix}
I_{m_1} \\ \vp(z)
\end{bmatrix}-e^{i x z J}\begin{bmatrix}
I_{m_1} \\ 0_{m_2 \times m_1}
\end{bmatrix}.
\end{align}
By virtue of \eqref{A.14} and \eqref{A.20} one has
\begin{align} 
\big((L-z I_{[L^2((0,\infty))]^{m}})^{-1}f_1, f_2\big) &= \begin{bmatrix}
0_{m_2 \times m_1} & I_{m_2}
\end{bmatrix}\int_0^ae^{-i x z J}V(x)u(x,z) \, dx\begin{bmatrix}
I_{m_1} \\ \vp(z)
\end{bmatrix}   \no \\
& \quad -\int_0^ae^{2i x z}v(x)^* dx.    \lb{A.23}
\end{align}
Employing \eqref{A.12}, one simplifies \eqref{A.23}  as follows, 
\begin{align}      \nonumber
& \big((L-z I_{[L^2((0,\infty))]^{m}})^{-1}f_1, f_2\big)= i \begin{bmatrix}
0_{m_2 \times m_1} & I_{m_2}
\end{bmatrix}\big(e^{-i  a z J}u(a,z)-I_m\big)\begin{bmatrix}
I_{m_1} \\ \vp(z)
\end{bmatrix}      \no \\ 
& \qquad -\int_0^ae^{2i x z}v(x)^*dx     \no 
\\[1mm]   
& \quad 
=-i \vp(z)+i e^{i az}\begin{bmatrix}
u_{21}(a,z) & u_{22}(a,z)\end{bmatrix}\begin{bmatrix}
I_{m_1} \\ \vp(z)
\end{bmatrix}-\int_0^ae^{2i x z}v(x)^*dx.
\end{align}
Finally, applying the first equality in \eqref{A.18} one derives
\begin{align}     &\lb{A.24}
\big((L-z I_{[L^2((0,\infty))]^{m}})^{-1}f_1, f_2\big)=-i \vp(z)-\int_0^ae^{2i x z}v(x)^*dx,
\end{align}
which is \eqref{A.13}.
\end{proof}

Since 
\begin{equation}
e^{i x z J } 
= \begin{bmatrix} e^{i x z } I_{m_1} & 0_{m_1 \times m_2} \\ 0_{m_2 \times m_1} & e^{- i x z } I_{m_2} \end{bmatrix},    \lb{A.25} 
\end{equation}
\eqref{A.14} is equivalent to
\begin{equation}
 f_1(x,z) = e^{i x z } \begin{bmatrix} 0_{m_1} \\ v(x)^*
\end{bmatrix}, \quad f_2 = e^{- i x \ol{z} } \begin{bmatrix} v(x) \\ 0_{m_2} \end{bmatrix}.    \lb{A.26} 
\end{equation}

In the special case $V = 0_{m \times m}$ a.e., we write $\cL_0$ and $L_0$ instead of $\cL$ and $L$ and note 
that formulas  \eqref{A.10} and \eqref{A.7} yield
\begin{align} 
& \big((L_0 - z I_{[L^2((0,\infty))]^{m}})^{-1} f\big)(x)=\int_0^{\infty} G_0(z,x,x') f(x') \, dx',     \\
& G_0(z,x,x') = i e^{iz |x - x'|} \begin{cases}
\begin{bmatrix} I_{m_1} & 0_{m_1 \times m_2} \\ 0_{m_2 \times m_1} & 0_{m_2} \end{bmatrix}, & x > x',  
\\[5mm] 
\begin{bmatrix} 0_{m_1} & 0_{m_1 \times m_2} \\ 0_{m_2 \times m_1} & I_{m_2} \end{bmatrix}, & x < x', 
\end{cases}   \quad  \Im(z) > 0.   \lb{A.27} 
\end{align}

Moreover, according to \eqref{A.27}, the Neumann series
\begin{equation}
(L - z I_{[L^2((0,\infty))]^{m}})^{-1} = \sum_{n=0}^{\infty} (L_0 - z I_{[L^2((0,\infty))]^{m}})^{-1} 
\big[V (L_0 - z I_{[L^2((0,\infty))]^{m}})^{-1}\big]^n   \lb{A.28} 
\end{equation}
is norm convergent on the subspace $\big[L^2((0,a))\big]^m$ for $0 < \Im(z)$ sufficiently large, and we now employ it to shed additional light on the Weyl--Titchmarsh function 
$\varphi$ with the help of \eqref{A.13}. 

\begin{lemma} \lb{lA.3}
Suppose that $V$ satisfies $V\in \big[L^2((0,\infty))\big]^{m \times m}$ and 
$\supp \, (V) \subseteq [0,a]$. Then for $0 < \Im(z)$ sufficiently large,  
\begin{align}    
\begin{split} 
& \vp(z)= i \int_0^a  e^{2 i  x z}v(x)^*dx+ i \big((L-z I_{[L^2((0,\infty))]^{m}})^{-1}f_1, f_2\big)    \\
& \hspace*{7mm} = - \sum_{k=0}^{\infty} M_{2k+1}(z; V),    \lb{A.29} 
\end{split}
\end{align}
where
\begin{align}
 M_1(z; V) &= - i \int_0^a e^{2 i z x_0} v(x_0)^* \, dx_0 := \int_0^a 
e^{2 i z \alpha}  \cA_1(\alpha) \, d\alpha,   \no \\     
 M_{2k+1} (z; V)  &= (-1)^{k+1} i \int_0^a dx_{2k} \int_0^{x_{2k}} dx_{2k-1} \int_{x_{2k-1}}^a dx_{2k-2} \cdots  
\int_0^{x_2} dx_{1}\int_{x_{1}}^a dx_{0}     \no \\
& \quad \times e^{2 i z \a} 
v(x_{2k})^* v(x_{2k-1}) v(x_{2k-2})^* \cdots v(x_{1}) v(x_{0})^* ,   \lb{A.30} 
\\ \nn 
\a &= x_{2k}-x_{2k-1}+x_{2k-2}-\ldots -x_1+x_0.
\end{align}
\end{lemma}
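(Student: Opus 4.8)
The plan is to substitute the norm-convergent Neumann series \eqref{A.28} into the resolvent inner product in \eqref{A.13} and evaluate the resulting series term by term. Abbreviating the free resolvent as $R_0 := (L_0 - z I)^{-1}$ (with $I$ the identity on $\big[L^2((0,\infty))\big]^m$), the starting point is
\[
\vp(z) = i\int_0^a e^{2ixz} v(x)^* \, dx + i \sum_{n=0}^\infty \big(R_0 (V R_0)^n f_1, f_2\big),
\]
with $f_1, f_2$ given explicitly in \eqref{A.26}. Since $\supp(V) \subseteq [0,a]$ and $V \in \big[L^2((0,\infty))\big]^{m\times m}$, the columns of $f_1$ and $f_2$ lie in $\big[L^2((0,a))\big]^m$, and \eqref{A.28} furnishes operator-norm convergence on $\big[L^2((0,a))\big]^m$ once $\Im(z)$ is large enough; this justifies interchanging the summation with the inner-product integral. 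The explicit first term is recognized as $-M_1$ directly from the definition of $M_1$ in \eqref{A.30}, so the task reduces to showing that $i\big(R_0 (V R_0)^n f_1, f_2\big)$ vanishes for $n$ even and equals $-M_{2k+1}$ for $n = 2k-1$ $(k \geq 1)$.

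Second, I would exploit the block structure of the free resolvent kernel \eqref{A.27} together with the off-diagonal form of $V$ recorded in \eqref{A3}. Applying $R_0$ integrates the upper $m_1$-block forward, $\int_0^x$ (the $x > x'$ branch of $G_0$), and the lower $m_2$-block backward, $\int_x^a$ (the $x < x'$ branch, capped by $\supp(V) \subseteq [0,a]$), while each factor of $V$ interchanges the two blocks, multiplying by $v$ (lower to upper) or $v^*$ (upper to lower). Because $f_1$ is supported in the lower block and $f_2^*$ selects the upper block, the vector $R_0 (V R_0)^n f_1$ must terminate in the upper block for the pairing with $f_2$ to be nonzero; a parity count shows this happens exactly when $n$ is odd, so all even-$n$ contributions vanish. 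For odd $n = 2k-1$ the same bookkeeping yields precisely $2k+1$ alternating factors $v^*, v, v^*, \dots, v^*$ (one $v^*$ from $f_1$, one from $f_2^*$, and $2k-1$ from the intermediate $V$'s), matching the product $v(x_{2k})^* v(x_{2k-1}) v(x_{2k-2})^* \cdots v(x_1) v(x_0)^*$ in \eqref{A.30}.

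Third, I would collect the analytic factors. Each of the $2k$ occurrences of $R_0$ contributes a factor $i$ from the kernel \eqref{A.27}, giving $i^{2k} = (-1)^k$; together with the overall $i$ this produces the prefactor $(-1)^k i = -(-1)^{k+1} i$, i.e.\ exactly $-1$ times the prefactor of $M_{2k+1}$. The alternating forward/backward integration limits — $\int_0^{x_{j+1}}$ for an upper-block step, $\int_{x_{j-1}}^a$ for a lower-block step, with the outermost $\int_0^a dx_{2k}$ arising from the $f_2$-pairing — reproduce the nested domain in \eqref{A.30}, and the accumulated phase (the factor $e^{ix_{2k}z}$ from $f_2^*$, the factor $e^{ix_0 z}$ from $f_1$, and one factor $e^{iz(x_j - x_{j-1})}$ or $e^{iz(x_{j+1}-x_j)}$ per resolvent kernel) telescopes to $e^{2iz\alpha}$ with the alternating sum $\alpha = x_{2k} - x_{2k-1} + \cdots - x_1 + x_0$. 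A short induction on $k$, propagating one additional $V R_0$ pair, makes this precise; the case $k=1$ can be checked by a direct triple-integral computation as a template.

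The main obstacle I anticipate is the combinatorial bookkeeping of the third step: keeping the orientation of each integration consistent with the block from which it arises, and verifying that the phases from the resolvent kernels and from $f_1, f_2$ combine into exactly the alternating exponent $\alpha$. By contrast, the convergence point is mild, dispatched by the stated norm convergence of \eqref{A.28} on $\big[L^2((0,a))\big]^m$ for $\Im(z)$ large, and the vanishing of the even-$n$ terms is a clean parity argument. Once the $k=1$ case is verified, the general alternating block/integration pattern follows inductively.
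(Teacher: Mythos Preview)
Your proposal is correct and follows essentially the same approach as the paper: insert the Neumann series \eqref{A.28} into \eqref{A.13}, use the block structure of $G_0$ and $V$ together with the form of $f_1,f_2$ to see that even-$n$ terms vanish, and read off the iterated integral \eqref{A.30} from the odd terms. The paper records this in a single sentence, whereas you have (correctly) filled in the parity count, the $2k$ factors of $i$, the nested integration limits, and the telescoping of the phase to $e^{2iz\alpha}$; your indexing $n=2k-1\leftrightarrow M_{2k+1}$ and prefactor bookkeeping are accurate.
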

\begin{proof}
Insertion of the Neumann series \eqref{A.28} into formula \eqref{A.13} for $\varphi$, exploiting the explicit form of $G_0(z,\, \dott \,, \dott \,)$ in \eqref{A.27}, yields \eqref{A.29} and \eqref{A.30}. In particular, the special block matrix structure of $V$ and $(L_0 - z I)^{-1}$, and the form of $f_j$, $j = 1,2$, shows that all even terms vanish identically 
in the Neumann expansion inserted into the first line of \eqref{A.29}, 
\begin{equation}
\Big((L_0 - z I_{[L^2((0,\infty))]^{m}})^{-1} \big[V (L_0 - z I_{[L^2((0,\infty))]^{m}})^{-1}\big]^{2k} f_1, f_2\Big) = 0, \quad k \in \bbN_0.  
\end{equation}
\end{proof}

In addition, we introduce the notation 
\begin{align}& \nn
 \ell_0 = \a=x_{2k}-x_{2k-1}+x_{2k-2} \ldots +x_2 -x_1+x_0, \\ &\nn
  \ell_1=x_{2k}-x_{2k-1}+x_{2k-2}\ldots +x_2 -x_1, 
\\ & \lb{ad1}   
   \ell_2=x_{2k}-x_{2k-1}+x_{2k-2}\ldots +x_2,
 \quad \ldots
\end{align}

\paragraph{{\bf Part 2.}}
Changing the order of integration in \eqref{A.30} one may derive the following lemma.

\begin{lemma} \lb{lA.4} Suppose that $V$ satisfies $V\in \big[L^2((0,\infty))\big]^{m \times m}$ and 
$\supp \, (V) \subseteq [0,a]$. Then $M_{2k+1}$
admits representation
\begin{align}& \lb{w1}
M_{2k+1} (z; V) =  \int_0^{(k+1)a} e^{2 i z \alpha}  \cA_k(\alpha) \, d\a.
\end{align}
Moreover, for continuous $v$ such that
\begin{align}& \lb{w2}
\max\|v(x)\|\leq c \quad (c \in (0, \infty))
\end{align}
one obtains 
\begin{align}& \lb{w3}
\| \cA_k(\a)\|\leq c^{2k+1} \, f_k(\a),
\end{align}
where $f_k(\a)$ $($which does not depend on $c)$  is given by
\begin{align}& \lb{w4}
f_k(\a)={(-1)^{k+1}}\wh \cA_k(\a),
\end{align}
and $\wh  \cA_k(\a)$ is $ \cA_k(\a)$ for the special case
\begin{align} \lb{w5}&
v(x)=\wh v(x):= \begin{cases} i, & x \in [0, a], \\
0, & x \in(a, \infty).
\end{cases}
\end{align}
\end{lemma}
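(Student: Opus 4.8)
The plan is to start from the explicit iterated integral \eqref{A.30} for $M_{2k+1}(z;V)$ and convert it into a single integral in the phase variable $\a$. The first observation is that in \eqref{A.30} the spectral parameter $z$ enters only through the factor $e^{2iz\a}$, while $\a=x_{2k}-x_{2k-1}+\cdots-x_1+x_0$ is a fixed linear combination of the integration variables. Grouping consecutive pairs, on the integration region cut out by the nested bounds in \eqref{A.30} one has
\[
\a=\sum_{j=1}^{k}\big(x_{2j}-x_{2j-1}\big)+x_0,
\]
and the bounds force $x_{2j-1}\leq x_{2j}\leq a$ for $1\leq j\leq k$ together with $0\leq x_0\leq a$, so each grouped term lies in $[0,a]$ and hence $\a$ ranges over $[0,(k+1)a]$. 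I would then promote $\a$ to a new outer integration variable (for instance by replacing $x_{2k}$ with $\a$, a substitution whose Jacobian is $1$) and apply Fubini's theorem to integrate out the remaining $2k$ variables at a fixed value of $\a$. Defining $\cA_k(\a)$ to be the $m_2\times m_1$ matrix produced by this partial integration then gives exactly \eqref{w1}.

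With this representation available, the bound \eqref{w3} follows by taking norms inside the slice integral. By construction $\cA_k(\a)$ equals $(-1)^{k+1}i$ times the integral, over the fixed-$\a$ slice, of the product of the $2k+1$ matrix factors $v(x_{2k})^*v(x_{2k-1})\cdots v(x_1)v(x_0)^*$. Under hypothesis \eqref{w2} the norm of this product is at most $c^{2k+1}$, so that
\[
\|\cA_k(\a)\|\leq c^{2k+1}\,\mathrm{vol}_k(\a),
\]
where $\mathrm{vol}_k(\a)$ is the $2k$-dimensional Lebesgue measure of the slice of the integration region at the fixed value $\a$.

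It remains to identify $\mathrm{vol}_k(\a)$ with $f_k(\a)$, which is the purpose of the scalar substitution \eqref{w5}. For $v=\wh v\equiv i$ each starred factor becomes $-i$ and each unstarred factor becomes $i$; since there are $k+1$ starred factors (at the even indices $0,2,\dots,2k$) and $k$ unstarred factors (at the odd indices), the integrand reduces to the scalar constant
\[
(-i)^{k+1}\,i^{\,k}=(-1)^{k+1}\,i^{\,2k+1}=-i.
\]
Hence $\wh\cA_k(\a)=(-1)^{k+1}i\,(-i)\,\mathrm{vol}_k(\a)=(-1)^{k+1}\mathrm{vol}_k(\a)$, and therefore $f_k(\a)=(-1)^{k+1}\wh\cA_k(\a)=\mathrm{vol}_k(\a)\geq 0$, matching \eqref{w4}. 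Combining this identity with the preceding norm estimate yields \eqref{w3}, and in particular confirms that $f_k$ is independent of $c$.

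I expect the main obstacle to be the careful bookkeeping in the change of variables: verifying that introducing $\a$ as an integration variable is measure preserving and giving a clean description of the fixed-$\a$ slice, so that the \emph{same} geometric quantity $\mathrm{vol}_k(\a)$ appears both in the general estimate and in the explicitly computable special case. The phase computation above is precisely what forces the sign $(-1)^{k+1}$ in \eqref{w4} and guarantees that $f_k$ is nonnegative, as any dominating function must be; getting this sign bookkeeping right (rather than the change of variables itself) is the delicate point.
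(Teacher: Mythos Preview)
Your proposal is correct and rests on the same core idea as the paper: the spectral parameter enters only through $e^{2iz\alpha}$, so one makes $\alpha$ the outer integration variable and defines $\cA_k(\alpha)$ as the slice integral of the matrix product. The execution differs slightly. The paper carries this out by a sequence of explicit pairwise changes of order of integration (its formulas for $\int_0^{x_2}dx_1\int_{x_1}^a dx_0$, then inductively for $\int_0^{x_{2s+2}}dx_{2s+1}\int_{x_{2s+1}}^a dx_{2s}$, and a final step for $\int_0^a dx_{2k}$), keeping track of how the limits in $\alpha$ split into pieces of the form $[(r-1)a+\ell_{2s+2},\,ra+\ell_{2s+3}]$ and $[ra+\ell_{2s+3},\,ra+\ell_{2s+2}]$. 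You instead package all of this as a single linear substitution (Jacobian~$1$) followed by one application of Fubini. Both are valid; your version is more economical, while the paper's gives an explicit piecewise description of the fixed-$\alpha$ slice. For the bound \eqref{w3} the paper simply says ``taking \eqref{w2} and the definition of $\Lambda$ into account, one derives \eqref{w3}''; your argument makes this explicit by identifying the majorant with the slice volume $\mathrm{vol}_k(\alpha)$ and then computing, for $v=\wh v\equiv i$, that the scalar product $(-i)^{k+1}i^{k}=-i$ forces $\wh\cA_k(\alpha)=(-1)^{k+1}\mathrm{vol}_k(\alpha)$, hence $f_k(\alpha)=\mathrm{vol}_k(\alpha)\geq 0$. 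That sign computation is a genuine addition over the paper's terse statement and is exactly the ``delicate point'' you flagged; it is correct as written.
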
 
\begin{proof} 
Recalling \eqref{ad1} and putting $x_0=\a - \ell_1$, one gets 
\begin{align} 
\begin{split} 
\Lam(\a - \ell_1, x_1, \ldots, x_{2k})=&\Lam(x_0, x_1, \ldots, x_{2k})
\\   \lb{ww1}
= & i v(x_{2k})^* v(x_{2k-1}) v(x_{2k-2})^* \cdots v(x_{1}) v(x_{0})^*.
\end{split} 
\end{align}
Next, changing variables and the order of integration in \eqref{A.30} one obtains 
\begin{align} \nn &
\int_0^{x_2}dx_1\int_{x_1}^a dx_0 \, e^{2 i z \alpha}\Lam(x_0, x_1, \ldots, x_{2k})
\\ \nn &
=\int_{\ell_2}^{a+\ell_3}d\a \int_0^{x_2}dx_1\, e^{2 i z \alpha}\Lam(\a - \ell_1, x_1, \ldots, x_{2k})
\\   \lb{ww2} & \quad
+\int_{a+\ell_3}^{a+\ell_2}d\a \int_0^{a+\ell_2-\a}dx_1\, e^{2 i z \alpha}\Lam(\a - \ell_1, x_1, \ldots, x_{2k}).
\end{align}
One can see by induction that the change of order of integration with respect to $\a$ and with respect to 
$x_{2s}$, $x_{2s+1}$ generates integrals of the form $\int_{(r-1)a+\ell_{2s+2}}^{ra+\ell_{2s+3}}d\a$
or $\int_{ra+\ell_{2s+3}}^{ra+\ell_{2s+2}}d\a$, which follows from \eqref{ww2} and the next two
equalities:
\begin{align}\nn &
\int_0^{x_{2s+2}}dx_{2s+1}\int_{x_{2s+1}}^a dx_{2s}\int_{(r-1)a+\ell_{2s}}^{ra+\ell_{2s+1}}d\a
\\ \nn &
=\int_{(r-1)a+\ell_{2s+2}}^{ra+\ell_{2s+3}}d\a \int_0^{x_{2s+2}}dx_{2s+1}\int_{x_{2s+1}}^{\a-(r-1)a-\ell_{2s+1}} dx_{2s}
\\ \lb{ww3} & \quad 
+ \int_{ra+\ell_{2s+3}}^{ra+\ell_{2s+2}}d\a \int_0^{ra+\ell_{2s+2}-\a}dx_{2s+1}\int_{x_{2s+1}}^{\a-(r-1)a-\ell_{2s+1}} dx_{2s}
\end{align}
and 
\begin{align}\nn &
\int_0^{x_{2s+2}}dx_{2s+1}\int_{x_{2s+1}}^a dx_{2s}\int_{ra+\ell_{2s+1}}^{ra+\ell_{2s}}d\a
\\ \nn &
=\int_{ra+\ell_{2s+3}}^{ra+\ell_{2s+2}}d\a \int_{ra+\ell_{2s+2}-\a}^{x_{2s+2}}dx_{2s+1}\int_{x_{2s+1}}^{a} dx_{2s}
\\ \nn & \quad 
+ \int_{ra+\ell_{2s+2}}^{(r+1)a+\ell_{2s+3}}d\a \int_0^{x_{2s+2}}dx_{2s+1}\int^{a}_{\a-ra-\ell_{2s+1}} dx_{2s}
\\ \lb{ww4} & \quad 
+ \int_{(r+1)a+\ell_{2s+3}}^{(r+1)a+\ell_{2s+2}}d\a \int_0^{(r+1)a+\ell_{2s+2}-\a}dx_{2s+1}\int^{a}_{\a-ra-\ell_{2s+1}} dx_{2s}.
\end{align}
The last changes of order of integration in \eqref{A.30} are given by one of the equalities
\begin{align}& \lb{ww5}
\int_0^a dx_{2k}\int_{ra}^{ra+x_{2k}}d\a=\int_{ra}^{(r+1)a}d\a \int_{\a-ra}^a dx_{2k} \quad (r \leq k),
\\ & \lb{ww6}
\int_0^a dx_{2k}\int_{(r-1)a+x_{2k}}^{ra}d\a=\int_{(r-1)a}^{ra}d\a \int^{\a-(r-1)a}_0 dx_{2k} \quad (r \leq k).
\end{align}
The representation \eqref{w1} follows from \eqref{A.30} and from \eqref{ww2}--\eqref{ww6}.
Taking also equations \eqref{w2} and \eqref{ww1} into account, one derives \eqref{w3}.
\end{proof}

\begin{remark} We note that since $\|v(x)\|$ is bounded (i.e., \eqref{w2}
holds), we use in our estimates for
$\cla_k$ and in the proof of the convergence
of the corresponding series for $\cla$ several results
on Dirac-type system with the special scalar $v = \widehat v$ given
by \eqref{w5}. These results provide the necessary estimates on the iterated
integrals, which appear in our calculations, and we resorted to this trick so the matrix structure in all 
quantities involved does not unnecessarily obscure the essential estimates. \hfill $\diamond$
\end{remark}

For our estimates we need the Weyl--Titchmarsh function $\wt \vp(z,c)$ of the system \eqref{A2}
with $v=c\wh v$, where $\wh v$ is  given by \eqref{w5}. In the following lemma we abandon the requirement $c>0$ and consider $c\in \BC$,  $\, c \neq 0$ because the proof in this more general case
remains the same and the example is of a certain independent interest.

\begin{lemma} \lb{lA.5} Let $c\in \BC$,  $\, c \neq 0$, and consider the special case where 
\begin{align}& \lb{w5+}
v(x)=\wt v(x): =c\wh v(x) = \begin{cases} c \, i, & x \in [0, a], \\ 0, & x \in (a,\infty), \end{cases}
\end{align}
in the system \eqref{A2} on $[0, \infty)$. Then the associated Weyl--Titchmarsh function $\wt \vp(z,c)$ has the form
\begin{align}& \lb{w6}
\wt \vp(z,c)=\psi(z, \la)\, \Big|_{\la=|c|^2}, \quad \psi (z,\la)=i \ov{c}\frac{1-e^{2iq(z,\la)}}{z+q(z,\la)-(z-q(z,\la))e^{2iq(z,\la)}},
\\ & \lb{w7}
q(z,\la): = (z^2-\la)^{1/2} \quad \big(\Im( z)>0, \; \Im\big((z^2-\la)^{1/2}\big)>0\big).
\end{align}
\end{lemma}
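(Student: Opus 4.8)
The plan is to exploit that for $v=\wt v$ as in \eqref{w5+} the potential is piecewise constant on $[0,a]$ and vanishes afterwards, so the fundamental solution can be written in closed form and $\wt\vp$ read off from the $L^2$ selection already encoded in \eqref{A.18}. First I would note that $\wh v$ (hence $\wt v$) is scalar, so $m_1=m_2=1$, $m=2$, and that on $[0,a]$ the coefficient matrix of \eqref{A2} is the constant matrix
\[
M(z):=i\big(zJ+JV\big)=\begin{bmatrix} iz & -c \\ -\ov c & -iz\end{bmatrix},
\]
where $V=\left[\begin{smallmatrix}0 & ci\\ -i\ov c & 0\end{smallmatrix}\right]$ is built from $v=\wt v$ via \eqref{A3}. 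Since $u_0(0,z)=I_2$, one has $u_0(x,z)=e^{M(z)x}$ for $x\in[0,a]$.

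The computational heart of the argument is the identity $\tr M=0$ together with $\det M=z^2-|c|^2=q(z,\la)^2\big|_{\la=|c|^2}$, so Cayley--Hamilton yields $M^2=-q^2I_2$. Splitting the exponential series into even and odd powers then gives the closed form
\[
u_0(x,z)=\cos(qx)\,I_2+\frac{\sin(qx)}{q}\,M \quad (0\le x\le a),
\]
whose lower row at $x=a$ is $\big[\,u_{21}(a,z)\ \ u_{22}(a,z)\,\big]=\big[\,-\ov c\,q^{-1}\sin(qa)\ \ \cos(qa)-iz\,q^{-1}\sin(qa)\,\big]$. For $x>a$ the system is free with transition matrix $e^{izJ(x-a)}=\diag\big(e^{iz(x-a)},e^{-iz(x-a)}\big)$; since $\Im(z)>0$, membership in $\big[L^2((0,\infty))\big]^2$ forces the growing second exponential to be annihilated, which is precisely the condition recorded in \eqref{A.18}, i.e.\ $\wt\vp(z,c)=-u_{22}(a,z)^{-1}u_{21}(a,z)$. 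Substituting the two entries above yields
\[
\wt\vp(z,c)=\frac{\ov c\,\sin(qa)}{q\cos(qa)-iz\,\sin(qa)}.
\]

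Finally I would pass to exponentials, $\sin(qa)=(e^{iqa}-e^{-iqa})/(2i)$ and $\cos(qa)=(e^{iqa}+e^{-iqa})/2$, and factor $e^{-iqa}$ out of numerator and denominator; after simplifying $\ov c/i=-i\ov c$ this turns $\wt\vp$ into $i\ov c\,(1-e^{2iqa})/\big[(z+q)-(z-q)e^{2iqa}\big]$, which is \eqref{w6} upon setting $\la=|c|^2$. I would also record that this ratio is invariant under $q\mapsto-q$, so the branch fixed in \eqref{w7} merely renders the individual exponentials well defined and does not affect $\wt\vp$; in particular the argument never uses $c>0$ and covers every $c\in\BC\setminus\{0\}$ verbatim. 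No genuine obstacle is expected: the only steps needing care are the Cayley--Hamilton identity $M^2=-q^2I_2$ that produces the closed-form exponential, and the bookkeeping in the final exponential rearrangement, while the $L^2$-selection of $\wt\vp$ is already supplied by \eqref{A.18}.
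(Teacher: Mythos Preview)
Your proposal is correct and follows essentially the same route as the paper: compute the fundamental solution of \eqref{A2} explicitly on $[0,a]$ as a $2\times 2$ matrix exponential, then read off $\wt\vp$ from the $L^2$ selection encoded in \eqref{A.18}. The only difference is cosmetic---the paper diagonalizes $zJ+JV$ via an explicit eigenvector matrix $K(z)$ to write $u(x,z)=K(z)e^{ixD(z)}K(z)^{-1}$, whereas you use Cayley--Hamilton ($M^2=-q^2I_2$) to get the equivalent closed form $\cos(qx)I_2+q^{-1}\sin(qx)M$; your route is marginally more direct and also makes the $q\mapsto -q$ invariance transparent.
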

\begin{proof} Introducing the $2 \times 2$ matrix-valued functions
\begin{align}& \lb{w8}
D(z)=\diag\{q(z, |c|^2), \, -q(z, |c|^2)\}, \quad K(z):=\begin{bmatrix} -ic & -ic \\ z-q(z, |c|^2) & z+q(z, |c|^2) \end{bmatrix}, 
\end{align}
a direct calculation shows that
\begin{align}& \lb{w9}
 K(z)D(z)K(z)^{-1}=\begin{bmatrix} z & ic \\ i\ov{c} & -z \end{bmatrix}.
\end{align}
It follows that in the case of $v = \widetilde v$ given by \eqref{w5+} the fundamental solution $u(x,z)$ of \eqref{A2} 
is given by the formula
\begin{align}& \lb{w10}
 u(x,z)=\wt u(x,z):=\begin{cases} K(z)e^{ixD(z)}K(z)^{-1}, & x\leq a, \\[1mm] 
e^{i(x-a)zJ} K(z)e^{iaD(z)}K(z)^{-1}, & x> a.
\end{cases}
\end{align}
In view of \eqref{w10} and the inequality $\Im(z)>0$, one concludes 
that the relation 
\begin{align}& \lb{w11}
\big(K(z)e^{iaD(z)}K(z)^{-1}\big)_{21}+\big(K(z)e^{iaD(z)}K(z)^{-1}\big)_{22}\wt \vp(z,c)=0
\end{align}
(for the lower entries of $K(z)e^{iaD(z)}K(z)^{-1}$) yields the property
\begin{align}\lb{w12}&
\wt u(\dott ,z)\begin{bmatrix} 1 \\ \wt \vp(z,c)\end{bmatrix} \in \big[L^2([0, \infty))\big]^{2},
\end{align}
which is characteristic for the Weyl--Titchmarsh function (cf.\ \eqref{e2}). In other words,
one has 
\begin{align}& \lb{w13}
\wt \vp(z,c)=-\big(K(z)e^{iaD(z)}K(z)^{-1}\big)_{21}\Big/ \big(K(z)e^{iaD(z)}K(z)^{-1}\big)_{22}.
\end{align}
Explicit calculations then show that \eqref{w13} is equivalent to \eqref{w6}.
\end{proof}
In the following we return to the special case $c>0$.
Assuming, for instance, that $\Im(z)\geq 2^{1/2} c$, one verifies that
$q(z,\la)$ is well-defined by \eqref{w7} and is analytic with respect to $\la$ in the disk
$|\la| \leq |c|^2+\ve$ for some $\ve>0$, which does not depend on $z$. Next, one 
rewrites the denominator in \eqref{w6} in the form
\begin{align}& \lb{w14}
{z+q(z,\la)-(z-q(z,\la))e^{2iq(z,\la)}}=(z+q(z,\la))\left(1-\frac{\la}{(z+q(z,\la))^2}e^{2iq(z,\la)}\right).
\end{align}
Since $\Im(z)\geq 2^{1/2} c$, this implies $|(z+q(z,\la))^2|>2|c|^2$. Hence, according to \eqref{w14}, 
the denominator of $\psi$ does not vanish in the disk $|\la| \leq |c|^2+\ve$, and
$\psi$ is analytic with respect to $\la$ in this disk. Therefore, $\psi(z,\la)$ admits a Taylor
expansion, which (taking into account  \eqref{w6}) we compare with the expansion \eqref{A.29}, \eqref{A.30}
for $\wt \vp$. It follows that
\begin{align}& \lb{w15}
\frac{1}{k!}\frac{\p^k \psi(z,\la)}{\p \la^k}\Big|_{\la=0}c^{2k}=-\wt M_{2k+1}(z,c),
\end{align}
where $\wt M_{2k+1}(z,c)$ is $M_{2k+1}(z,V)$ with $v = \widetilde v$ given by \eqref{w5+}. In view of \eqref{w1}, \eqref{w4}
and \eqref{w15} one derives 
\begin{align}& \lb{w16}
\frac{1}{c} \frac{(-1)^{k}}{k!}\frac{\p^k \psi(z,\la)}{\p \la^k}\Big|_{\la=0}=\int_0^{(k+1)a} e^{2iz\a}f_k(\a) \, d\a.
\end{align}
Since $\psi(z, \la)$ admits Taylor expansion in the disk $|\la| \leq |c|^2+\ve$, one concludes 
that $\sum_{k=0}^\infty \frac{1}{c}  \frac{(-1)^{k}}{k!}\frac{\p^k \psi(z,\la)}{\p \la^k}\Big|_{\la=0}c^{2k}$ converges absolutely, and, moreover, 
\begin{align}& \lb{w17}
\sum_{k=0}^\infty \frac{1}{c} \frac{(-1)^{k}}{k!}\frac{\p^k \psi(z,\la)}{\p \la^k}\Big|_{\la=0}c^{2k}=\psi(z, -c^2)\big/ c.
\end{align}
Relations \eqref{w6} and \eqref{w14} imply also that $\psi (\xi+i\eta, -c^2)\big/c\in L^2(\bbR; d\xi)$ (slightly abusing notation for the sake of simplicity) for all sufficiently large fixed values of $\eta$.

\begin{remark}\lb{RkL} 
We recall that in the case of bounded $v$ (more precisely in the case where \eqref{w2} is valid)
formula \eqref{A.29} holds for $\Im(z) \geq \frac{1}{2}ac^2$ and that in the considerations after the proof of
Lemma \ref{lA.5} we assumed that $\Im(z) \geq 2^{1/2} c$. Thus, \eqref{A.29}  and the statements and relations  after the proof of
Lemma \ref{lA.5}  hold if 
\begin{align}& \lb{w19-}
\eta = \Im(z) \geq C=\max \, \big(2^{1/2} c, \,\, ac^2/2\big).
\end{align}
When \eqref{w19-} is valid,
it follows from \eqref{w16} and \eqref{w17} that 
\begin{align}& \lb{wf1}
\psi (\xi+i\eta, -c^2)\big/c=\lim_{N\to \infty}\int_0^{(N+1)a} e^{2iz\a} \left(\sum_{k=0}^N f_k(\a)c^{2k}\right) d\alpha.
\end{align}
Formula \eqref{wf1} yields that the function $\psi (\xi+i\eta, -c^2)\big/c$ is positive definite (as a function of $\xi$)
for each fixed $\eta$ satisfying \eqref{w19-}. Furthermore, according to the properties of the positive definite functions (see, e.g., \cite{Lu72}
and more references in \cite{GP17})
the derivative $\om(\a, \eta)$ of the absolutely continuous part
of the distribution for the positive definite function $\psi (\xi+i\eta, -c^2)\big/c$  satisfies
the following relations
\begin{align}& \lb{wf2}
\om(\a, \eta)\in L^1(\bbR; d \alpha), \quad 2 \om(\a, \eta)\geq e^{-\eta \a}\sum_{k=0}^{\infty}c^{2k} f_k(\a/2) \quad (\a \geq 0).
\end{align}
Using \eqref{w3} and \eqref{wf2}, one concludes that $\sum_{k=0}^{\infty} \cla_k(\a)$ converges on each finite interval 
(in the $L^1$-norm) and that 
\begin{align}& \lb{wf3}
e^{- 2\eta \a}\sum_{k=0}^{\infty} \cla_k(\a)\in \big[L^1((0, \infty); d \alpha)\big]^{m_2 \times m_1}.
\end{align}
Since relations \eqref{A.29} and \eqref{w1} imply 
\begin{align}& \lb{w20}
\cA(\a)=\sum_{k=0}^{\infty}\cA_k(\a),
\end{align}
one finally concludes that
\begin{align}& \lb{wf4}
e^{- 2\eta \a} \cla(\a)\in \big[L^1((0, \infty); d \alpha)\big]^{m_2 \times m_1}.
\end{align}
${}$ \hfill $\diamond$
\end{remark}

\paragraph{{\bf Part 3.}} Next, we consider in greater detail the case 
$v\in \big[C^1([0,\infty))\big]^{m_1 \times m_2}$ (as in Theorem \ref{AeqfD}),
$\supp \, (v) \subseteq [0,a]$. Then, for each $v$ there are some values $c$, $\wh c$ such that
\begin{align}& \lb{w19}
\max(\|v(x)\|)<c, \quad \max(\|v^{\prime}(x)\|)<\wh c.
\end{align}
One recalls that we switch from $\vp(z)$ and $\cA(\a)$ to $\vp_{\ell}(z)$ and $\cA(x, \ell)$
by considering $v(x+\ell)$ instead of $v(x)$. Hence, \eqref{A.30}, \eqref{w1} and \eqref{w19} imply that 
\begin{align}& \lb{w21}
\bigg\| \frac{\p}{\p \ell}\cA_k(\a, \ell)\bigg\|\leq (2k+1)\wh c \, c^{2k} \, f_k(\a)
\end{align}
(cf.\ \eqref{w3}). On the other hand, similarly to \eqref{w17}, one derives
\begin{align}& \lb{w22}
\sum_{k=1}^\infty \frac{(-1)^{k}}{(k-1)!}\frac{\p^k \psi(z,\la)}{c\, \p \la^k}\Big|_{\la=0}c^{2k-2}=-\frac{1}{c}\frac{\p \psi(z, \la)}{\p \la}\Big|_{\la=-c^2},
\end{align}
and the series in \eqref{w22} converges absolutely. Taking into account \eqref{w16} and \eqref{w22} one can see
that the function $-\frac{1}{c}\frac{\p \psi(z, \la)}{\p \la}\Big|_{\la=-c^2}$ is positive definite. Moreover, according to the properties
of the positive definite functions \cite{Lu72} and because of relations \eqref{w16} and \eqref{w22} one 
has (similarly to \eqref{wf2})  
\begin{align}& \lb{wf5}
\wh \om(\a, \eta)\in L^1(\bbR; d \alpha), \quad 2 \wh \om(\a, \eta)\geq e^{-\eta \a}\sum_{k=1}^{\infty}k c^{2k} f_k(\a/2),
\end{align}
where $\wh \om(\a, \eta)$ is the derivative  of the absolutely continuous part
of the distribution for the  function $-\frac{1}{c}\frac{\p \psi(z, \la)}{\p \la}\Big|_{\la=-c^2}$. 
This implies 
that  the sum
\begin{equation} 
\sum_{k=0}^{\infty} (2k+1)\wh c \, c^{2k} f_k(\a) 
\end{equation} 
converges in $L^1((0,\infty); d \alpha)$ and that 
\begin{align}& \lb{w25}
e^{-\eta \a}\sum_{k=0}^{\infty}(2k+1)\wh c \, c^{2k} f_k(\a/2)\in L^1((0, \infty); d \alpha) \quad  (\eta \geq C).
\end{align}
Finally, by virtue of \eqref{w20}, \eqref{w21} and \eqref{w25}, one infers that $\cA(\a, \ell)$ is differentiable with respect to $\ell$
and
\begin{align}& \lb{w26}
e^{-\eta \a} \frac{\p}{\p \ell}\cA(\a, \ell)\in L^1((0, \infty); d \alpha)^{m_2 \times m_1} \quad  (\eta \geq 2C).
\end{align}
Hence, relations \eqref{Repr}, \eqref{e42} and \eqref{e45} yield the following lemma.

\begin{lemma}\lb{ML}
Suppose that $v$ satisfies $v\in \big[C^1([0,\infty))\big]^{m_1 \times m_2}$ and 
$\supp \, (v) \subseteq [0,a]$. Then, the equality
\begin{align} & \lb{w27}
\frac{d}{d \ell}\vp_{\ell}(z)=- \int_0^{\infty} e^{2i \a z} \frac{\p}{\p \ell}\cA(\a, \ell) \, d\a
\end{align}
is valid for $\Im(z) \geq C$ $($cf.\ \eqref{w19-}$)$.
\end{lemma}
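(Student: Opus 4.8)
The plan is to pass from the Laplace-type representation \eqref{Repr} of the Weyl--Titchmarsh function, which is phrased in terms of $\Phi$, to an equivalent representation in terms of $\cA=\Phi'$, and then to differentiate the latter in $\ell$ under the integral sign, the interchange being justified by the weighted integrability bound \eqref{w26}. Concretely, I would first apply \eqref{Repr} to the $\ell$-shifted system \eqref{e40}, whose Weyl--Titchmarsh function is $\vp_\ell(z)$ by Remark \ref{RkSdvig}, to obtain
\[
\vp_\ell(z) = 2iz\int_0^\infty e^{2ixz}\Phi(x,\ell)\,dx \quad (\Im(z)>0).
\]
Integrating by parts in $x$ (writing $2iz\,e^{2ixz}\,dx = d(e^{2ixz})$) and using $\cA(\dott,\ell)=\Phi'(\dott,\ell)$ from \eqref{e42}, the boundary value $\Phi(0,\ell)=0_{m_2\times m_1}$ from \eqref{e45}, and the vanishing of the boundary term at $x=\infty$ (valid since $\Im(z)>0$ while $\Phi(\dott,\ell)$ grows subexponentially, $\Phi'=\cA$ obeying the weighted bound \eqref{wf4}), I arrive at the intermediate representation
\[
\vp_\ell(z) = -\int_0^\infty e^{2ixz}\cA(x,\ell)\,dx \quad (\Im(z)\ge C).
\]
The same formula can alternatively be read off from the Neumann expansion \eqref{A.29}, \eqref{w1}, \eqref{w20}.

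The core step is the interchange of $\frac{d}{d\ell}$ with $\int_0^\infty dx$ in the last display, which I would carry out by dominated convergence applied to the difference quotient $h^{-1}\big(\vp_{\ell+h}(z)-\vp_\ell(z)\big)$. For each fixed $x$ the integrand $e^{2ixz}\,h^{-1}\big(\cA(x,\ell+h)-\cA(x,\ell)\big)$ converges to $e^{2ixz}\frac{\p}{\p\ell}\cA(x,\ell)$, the $\ell$-differentiability of $\cA$ having already been secured via \eqref{w20}, \eqref{w21}, \eqref{w25}. For the majorant, the mean value theorem applied entrywise, combined with $\frac{\p}{\p\ell}\cA=\sum_{k}\frac{\p}{\p\ell}\cA_k$ and the bound \eqref{w21}, gives
\[
\Big\|h^{-1}\big(\cA(x,\ell+h)-\cA(x,\ell)\big)\Big\| \le \sum_{k=0}^\infty (2k+1)\wh c\,c^{2k} f_k(x) =: g(x),
\]
so the integrand is dominated by $e^{-2x\Im(z)} g(x)$. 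By \eqref{w25} one has $e^{-\mu x} g(x)\in L^1((0,\infty);dx)$ for $\mu\ge 2C$; choosing $\mu=2\Im(z)$ shows $e^{-2x\Im(z)}g(x)\in L^1$ precisely when $\Im(z)\ge C$, which is the hypothesis of the lemma. Dominated convergence then delivers \eqref{w27}.

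I expect the one point requiring genuine care---though it is essentially prepared in \eqref{w19}---to be the \emph{uniformity in $\ell$} of the majorant $g$. This holds because $c,\wh c$ in \eqref{w19} bound $\max_x\|v(x)\|$ and $\max_x\|v'(x)\|$, and for the shifted potential one has $\max_x\|v(x+\ell)\|\le\max_y\|v(y)\|<c$ together with the analogous inequality for the derivative; hence a single choice of $c$, $\wh c$, $C$, and therefore a single $g$, serves all $\ell\ge 0$ at once, which is exactly what the dominated convergence step needs. The only remaining book-keeping is the threshold matching $\Im(z)\ge C$ against the factor $e^{-2x\Im(z)}=|e^{2ixz}|$ noted above, after which the argument applies on any fixed $\ell$-interval and the proof is complete.
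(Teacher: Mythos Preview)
Your proof is correct and follows essentially the same route the paper indicates: the paper simply states that \eqref{Repr}, \eqref{e42}, \eqref{e45}, together with the just-established differentiability of $\cA$ in $\ell$ and the weighted $L^1$ bound \eqref{w26}, yield the lemma. You have supplied the details---integration by parts to pass from $\Phi$ to $\cA$, and dominated convergence (with the $\ell$-uniform majorant coming from \eqref{w21}, \eqref{w25}) to interchange $\frac{d}{d\ell}$ with the integral---exactly as intended, including the threshold bookkeeping $\Im(z)\ge C \Leftrightarrow 2\Im(z)\ge 2C$ that reconciles \eqref{w25}/\eqref{w26} with the stated range of validity.
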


According to \eqref{w26} and \eqref{w27} one has the following result.

\begin{corollary}\lb{MC} Suppose that $v$ satisfies $v\in \big[C^1([0,\infty))\big]^{m_1 \times m_2}$ and 
$\supp \, (v) \subseteq [0,a]$. Then
\begin{align} & \lb{w28}
\frac{d}{d \ell}\vp_{\ell}(z) \underset{\Im(z)\to \infty}{=} 
- \int_0^{\eT} e^{2i x z} \frac{\p}{\p \ell}\cA(x, \ell) \, dx+ O\Big(e^{2 i \eT z}\Big).
\end{align}
\end{corollary}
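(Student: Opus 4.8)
The plan is to deduce \eqref{w28} directly from the exact representation \eqref{w27} of Lemma \ref{ML}, which already holds for all $z$ with $\Im(z) \geq C$, by splitting off the tail of the integral. Writing $g(\a) := \frac{\p}{\p \ell}\cA(\a,\ell)$ for fixed $\ell$, I would decompose
\[
\int_0^{\infty} e^{2i\a z} g(\a)\,d\a = \int_0^{\eT} e^{2i\a z} g(\a)\,d\a + \int_{\eT}^{\infty} e^{2i\a z} g(\a)\,d\a,
\]
so that the first summand reproduces (after renaming the dummy variable) the leading term on the right-hand side of \eqref{w28}, and the entire matter reduces to showing that the tail integral is $O\big(e^{2i\eT z}\big)$ as $\Im(z) \to \infty$, uniformly in $\Re(z)$.

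For the tail I would set $z = \xi + i\eta$ and note $|e^{2i\a z}| = e^{-2\eta\a}$. The essential input is the weighted integrability \eqref{w26}, which (taking $\eta = 2C$ there) guarantees that $G := \int_0^{\infty} e^{-2C\a}\|g(\a)\|\,d\a < \infty$. For $\a \geq \eT$ and $\eta \geq C$ I would factor $e^{-2\eta\a} = e^{-2C\a}\,e^{-(2\eta - 2C)\a}$ and bound the second factor by its value at $\a = \eT$, which yields
\[
\bigg\| \int_{\eT}^{\infty} e^{2i\a z} g(\a)\,d\a \bigg\| \leq e^{-(2\eta - 2C)\eT}\int_{\eT}^{\infty} e^{-2C\a}\|g(\a)\|\,d\a \leq G\, e^{2C\eT}\, e^{-2\eta\eT}.
\]
Since $e^{-2\eta\eT} = |e^{2i\eT z}|$ and the constant $G\,e^{2C\eT}$ is independent of $z$ (in particular of $\Re(z)$), this is precisely the asserted $O\big(e^{2i\eT z}\big)$ bound, and \eqref{w28} follows.

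There is essentially no serious obstacle here, since the entire analytic difficulty has already been absorbed into the weighted $L^1$ estimate \eqref{w26} established through the positive-definiteness arguments of Part 2. The only point requiring care is the bookkeeping of exponential thresholds: one must check that the regime $\Im(z) \geq 2C$ makes both \eqref{w27} and \eqref{w26} simultaneously available, and that the weight $e^{-2C\a}$ is exactly what converts the tail's pointwise decay $e^{-2\eta\a}$ into the factor $e^{-2\eta\eT}$ matching $|e^{2i\eT z}|$, leaving a $z$-independent constant. As the limit is taken as $\Im(z) \to \infty$, the restriction $\Im(z) \geq 2C$ is harmless.
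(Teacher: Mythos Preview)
Your proposal is correct and follows exactly the approach the paper indicates: the paper simply states that the corollary follows from \eqref{w26} and \eqref{w27}, and you have filled in precisely the routine tail-splitting and exponential-weight estimate that this citation implicitly invokes. The only (harmless) over-caution is requiring $\Im(z)\geq 2C$; in fact $\Im(z)\geq C$ already suffices, since \eqref{w26} is a property of $\tfrac{\partial}{\partial\ell}\cA(\,\cdot\,,\ell)$ independent of $z$ and your factoring argument needs only $\eta\geq C$.
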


Finally, we study $-\frac{1}{c}\frac{\p \psi(z, \la)}{\p \la}\Big|_{\la=-c^2}$ in greater detail. Explicit calculations 
show that
\begin{align}& \lb{w23}
\frac{\p \psi (z, \la)}{c\, \p \la}=\frac{i}{2}\,\, \frac{1+4iaq(z,\la)e^{2iaq(z,\la)}-e^{4iaq(z,\la)}}{q(z,\la)(z+q(z,\la))^2\left(1-\frac{\la}{(z+q(z,\la))^2}e^{2iq(z,\la)}\right)^2},
\end{align}
and so in view of \eqref{w7} and \eqref{w23} one has 
\begin{align}& \lb{w24}
- \frac{\p \psi (\xi+i\eta, \la)}{c\,  \p \la}\Big|_{\la = - c^2} \in L^1(\bbR; d \xi) \quad (\eta \geq C).
\end{align}

\begin{remark}\lb{LR}
According to \cite[Theorem 1.3.6]{Sa13}, \eqref{w24} implies that the distribution functions $\mu(\a,\eta)$ are absolutely continuous
and the densities
\begin{align} & \lb{wf11}
\frac{d}{d \a}\mu(\a,\eta):=\wh \om(\a, \eta)
\end{align}
are continuous (with respect to $\a$). In particular, one infers that inequality \eqref{wf5} is, in fact, an equality:
\begin{align}& \lb{wf12}
 2 \wh \om(\a, \eta)= e^{-\eta \a}\sum_{k=1}^{\infty}k c^{2k} f_k(\a/2) \quad (\eta \geq C).
\end{align}
${}$ \hfill  $\diamond$
\end{remark}

\noindent
{\bf Acknowledgments.} 
We are indebted to Mark Ashbaugh for helpful discussions on the material in Part 2 of Appendix \ref{sA}. 
F.G. is indebted to Alexander Sakhnovich for the great hospitality extended to him at the Faculty of Mathematics of the University of Vienna, Austria, during an extended stay in June of 2018. The research of A.L.~Sakhnovich was supported by the Austrian Science Fund (FWF) under Grant No.~P29177. 


\end{document}